     \OR\ifentrytype{incollection}\OR\ifentrytype{inproceedings}%
     \OR\ifentrytype{inreference}} {\printtext[title]{%
\newcommand{\citelink}[2]{\hyperlink{cite.\therefsection @#1}{#2}}
\definecolor{darkblue}{rgb}{0.13,0.13,0.39}
\DeclareSymbolFont{eulerletters}{U}{eur}{m}{n}
\newcommand{\funnyp}{\mathscr{P}}
\renewcommand{\labelitemi}{\raisebox{0.075em}{\textendash}}
\newtheorem{thm}{Theorem}[section] 
\newtheorem{lem}[thm]{Lemma}
\newtheorem{theorem}[thm]{Theorem}
\newtheorem{prop}[thm]{Proposition}
\theoremstyle{definition}
\newtheorem{rem}[thm]{Remark}
\newtheorem{defn}[thm]{Definition}
\newtheorem{ex}[thm]{Example}
\newcommand{\fh}{\mathfrak{h}}
\newcommand{\fg}{\mathfrak{g}}
\newcommand{\ff}{\mathfrak{f}}
\newcommand{\fb}{\mathbf{b}}
\newcommand{\xx}{X}
\newcommand{\R}{R}
\newcommand{\Qt}{Q^*}
\newcommand{\I}{\uptext{i}}
\newcommand{\pp}{\mathbb{P}}
\newcommand{\ee}{\mathbb{E}}
\newcommand{\rr}{\mathbb{R}}
\newcommand{\zz}{\mathbb{Z}}
\newcommand{\aip}{\mathcal{A}}
\newcommand{\Bt}{{\mathcal{A}}_{2\to 1}}
\newcommand{\cD}{\mathcal{D}}
\newcommand{\cR}{\mathcal{R}}
\newcommand{\cm}{\mathcal{M}}
\newcommand{\p}{\partial}
\newcommand{\uno}[1]{\mathbf{1}_{#1}}
\newcommand{\ep}{\varepsilon}
\newcommand{\eps}{\varepsilon}
\newcommand{\vs}{\vspace{6pt}}
\newcommand{\wt}{\widetilde}
\newcommand{\qand}{\quad\text{and}\quad}
\newcommand{\qqand}{\qquad\text{and}\qquad}
\newcommand{\mQ}{\bar{Q}}
\newcommand{\fw}{\mathbf{w}}
\newcommand{\ts}{\hspace{0.1em}}
\newcommand{\tts}{\hspace{0.05em}}
\newcommand{\tsm}{\hspace{-0.1em}}
\newcommand{\ttsm}{\hspace{-0.05em}}
\newcommand{\inv}[1]{\frac{1}{#1}}
\newcommand{\itwopii}[1]{\frac{1}{(2\pi\I)^{#1}}}
\DeclareMathOperator\arctanh{arctanh}
\newcommand\RedeclareMathOperator{%
  \@ifstar{\def\rmo@s{m}\rmo@redeclare}{\def\rmo@s{o}\rmo@redeclare}%
}
\newcommand\rmo@redeclare[2]{%
  \begingroup \escapechar\m@ne\xdef\@gtempa{{\string#1}}\endgroup
  \expandafter\@ifundefined\@gtempa
     {\@latex@error{\noexpand#1undefined}\@ehc}%
     \relax
  \expandafter\rmo@declmathop\rmo@s{#1}{#2}}
\newcommand\rmo@declmathop[3]{%
  \DeclareRobustCommand{#2}{\qopname\newmcodes@#1{#3}}%
}
\newcommand{\uptext}[1]{\text{\upshape{#1}}}
\DeclareMathOperator{\epi}{\uptext{epi}}
\DeclareMathOperator{\oepi}{\epi}
\DeclareMathOperator{\hypo}{\uptext{hypo}}
\DeclareMathOperator{\UC}{\uptext{UC}}
\DeclareMathOperator{\LC}{\uptext{LC}}
\DeclareMathOperator{\Ai}{\uptext{Ai}}
\DeclareMathOperator{\sgn}{\uptext{sgn}}
\DeclareMathOperator{\tr}{\uptext{tr}}
\RedeclareMathOperator{\det}{\mathop{\uptext{det}}}
\RedeclareMathOperator{\ker}{\mathop{\uptext{ker}}}
\RedeclareMathOperator{\exp}{\mathop{\uptext{exp}}}
\RedeclareMathOperator{\log}{\mathop{\uptext{log}}}
\RedeclareMathOperator*{\lim}{\mathop{\uptext{lim}}}
\RedeclareMathOperator*{\sup}{\mathop{\uptext{sup}}}
\RedeclareMathOperator*{\limsup}{\mathop{\uptext{lim\hspace{1pt}sup}}}
\RedeclareMathOperator*{\liminf}{\mathop{\uptext{lim\hspace{1pt}inf}}}
\RedeclareMathOperator*{\max}{\mathop{\uptext{max}}}
\RedeclareMathOperator*{\inf}{\mathop{\uptext{inf}}}
\RedeclareMathOperator*{\arctanh}{\mathop{\uptext{arctanh}}}
\RedeclareMathOperator*{\min}{\mathop{\uptext{min}}}
\RedeclareMathOperator*{\cos}{\mathop{\uptext{cos}}}
\RedeclareMathOperator*{\sin}{\mathop{\uptext{sin}}}
\RedeclareMathOperator*{\arg}{\mathop{\uptext{arg}}}
\RedeclareMathOperator{\Re}{\uptext{Re}}
\RedeclareMathOperator{\Im}{\uptext{Im}}
\renewcommand{\d}{\mathrm{d}}
\newcommand{\TASEP}{\uptext{TASEP}}
\newcommand{\cw}{\mathcal{W}}
\newcommand{\sW}{\mathsf{W}}
\newcommand{\sK}{\mathsf{K}}
\newcommand{\sQ}{\mathsf{N}}
\newcommand{\sI}{\mathsf{I}}
\newcommand{\Ml}{N}
\newcommand{\oM}{\overline{\Ml}}
\newcommand{\SM}{\mathcal{S}}
\newcommand{\SN}{\bar{\mathcal{S}}}
\newcommand{\fT}{\mathbf{S}}
\newcommand{\ft}{\mathbf{t}}
\newcommand{\fs}{\mathbf{s}}
\newcommand{\s}{\fs}
\renewcommand{\b}{\mathbf{b}}
\newcommand{\fx}{\mathbf{x}}
\newcommand{\fy}{\mathbf{y}}
\newcommand{\fz}{\mathbf{z}}
\newcommand{\fu}{\mathbf{u}}
\newcommand{\fa}{\mathbf{a}}
\newcommand{\fA}{\mathbf{A}}
\newcommand{\fB}{\mathbf{B}}
\newcommand{\fK}{\mathbf{K}}
\newcommand{\fI}{\mathbf{I}}
\newcommand{\ftau}{\bm{\tau}}
\newcommand{\gga}{\bar{\bm{\alpha}}}
\newcommand{\g}{\bar{\bm{\gamma}}}
\newcommand{\fsigma}{\bm{\sigma}}
\renewcommand{\P}{\chi}
\let\oldFootnote\footnote
\newcommand\nextToken\relax
\renewcommand\footnote[1]{%
    \oldFootnote{#1}\futurelet\nextToken\isFootnote}
\newcommand\isFootnote{%
    \ifx\footnote\nextToken\textsuperscript{,}\fi}
\def\dash---{\kern.16667em---\penalty\exhyphenpenalty\hskip.16667em\relax}
\numberwithin{equation}{section}
\let\oldmarginpar\marginpar
\renewcommand\marginpar[1]{\-\oldmarginpar[\raggedleft\footnotesize #1]%
  {\raggedright{\small\textsf{#1}}}}
\begin{document}

\maxtocdepth{subsection}

\title{The KPZ fixed point}

\date{October 11, 2020}

\author{Konstantin Matetski} \address[K.~Matetski]{
  Department of Mathematics\\
  University of Toronto\\
  40 St. George Street\\
  Toronto, Ontario\\
  Canada M5S 2E4} \email{matetski@math.toronto.edu}

\author{Jeremy Quastel} \address[J.~Quastel]{
  Department of Mathematics\\
  University of Toronto\\
  40 St. George Street\\
  Toronto, Ontario\\
  Canada M5S 2E4} \email{quastel@math.toronto.edu}

\author{Daniel Remenik} \address[D.~Remenik]{
  Departamento de Ingenier\'ia Matem\'atica and Centro de Modelamiento Matem\'atico (UMI-CNRS 2807)\\
  Universidad de Chile\\
  Av. Beauchef 851, Torre Norte, Piso 5\\
  Santiago\\
  Chile} \email{dremenik@dim.uchile.cl}

\begin{abstract}  
An explicit Fredholm determinant formula is derived for the multipoint distribution of the height function of the totally asymmetric simple exclusion process (TASEP) with arbitrary right-finite initial condition.
The method is by solving the biorthogonal ensemble/non-intersecting path representation found by \cite{sasamoto,borFerPrahSasam}.
The resulting kernel involves transition probabilities of a random walk forced to hit a curve defined by the initial data. 

In the KPZ 1:2:3 scaling limit the formula leads in a transparent way to a Fredholm determinant formula, in terms of analogous kernels based on Brownian motion, for the transition probabilities of the scaling invariant Markov process at the centre of the KPZ universality class. 
The formula readily reproduces known special self-similar solutions such as the Airy$_1$ and Airy$_2$ processes.
The process takes values in real valued functions which look locally like Brownian motion, and is H\"older $1/3-$ in time.

Both the KPZ fixed point and TASEP are shown to be \emph{stochastic integrable systems} in the sense that the time evolution of their transition probabilities can be linearized through a new \emph{Brownian scattering transform} and its discrete analogue.
\end{abstract}

\maketitle
\tableofcontents

\section{The KPZ universality class}

All models in the one dimensional \emph{Kardar-Parisi-Zhang (KPZ) universality class} (random growth models, last passage percolation and directed polymers, random stirred fluids) have an analogue of the height function $h(t,x)$ (free energy, integrated velocity) which is conjectured to converge at large time and length scales ($\ep\searrow 0$), under the KPZ 1:2:3 scaling  
\begin{equation}\label{123}
\ep^{1/2} h(\ep^{-3/2} t, \ep^{-1} x) - C_\ep t, 
\end{equation}
to a universal fluctuating field $\fh(t,x)$ which does not depend on the particular model, but \emph{does} depend on the initial data class.  
Since many of the models are Markovian, the invariant limit process, the \emph{KPZ fixed point}, will be as well.
The purpose of this article is to describe this Markov process, and how it arises from certain microscopic models.

The KPZ fixed point should not be confused with the \emph{Kardar-Parisi-Zhang equation} \cite{kpz},
\begin{equation}\label{KPZ}
\partial_t h = \lambda(\partial_xh)^2    + \nu \partial_x^2h + \sigma \xi
\end{equation}
with $\xi$ a space-time white noise, which is a canonical continuum equation for random growth,  lending its name to the class.
One can think of the space of models in the class as having a trivial, Gaussian fixed point, the \emph{Edwards-Wilkinson fixed point}, given by \eqref{KPZ} with $\lambda=0$ and the 1:2:4 scaling $\ep^{1/2} h(\ep^{-2} t, \ep^{-1} x) - C_\ep t$, and the non-linear \emph{KPZ fixed point}, conjecturally given by sending $\nu\searrow 0$ in \eqref{KPZ} with $\sigma=\nu^{1/2}$.
The KPZ equation is just one of these many models, but it does play a distinguished role as the (conjecturally) unique heteroclinic orbit between the two fixed points.
The KPZ equation can be obtained from certain microscopic models in the \emph{weakly asymmetric} or \emph{intermediate disorder} limits \cite{berGiaco,akq2,mqrScaling,corwinTsai,corwinNica,corwinShenTsai} (which are not equivalent, see \cite{HQ}).
Since some of these models are partially solvable (in particular the asymmetric simple exclusion process, through the work of Tracy and Widom [\citelink{tracyWidomASEP1}{TW08-09}]), exact one point distributions are known for the KPZ equation for special initial data \cite{acq}.
These issues of the universality of the KPZ equation and its distributions comprise the \emph{weak KPZ universality conjecture}.
\nocite{tracyWidomASEP2,tracyWidomASEP1,tracyWidomASEP3}

However, the KPZ equation is \emph{not} invariant under the KPZ 1:2:3 scaling \eqref{123}, which is expected to send it, along
with all other models in the class, to the true universal (strong coupling, long time) fixed point.  
In modelling, for example, edges of bacterial colonies, forest fires, or spread of genes, the non-linearities or noise are often not weak, and it is really the fixed point that should be used in approximations and not the KPZ equation.
However, progress has been hampered by a complete lack of understanding of the time evolution of the fixed point itself.   
Essentially all one had was fixed time distributions of a few special self-similar solutions, the \emph{Airy processes}.

Under the KPZ 1:2:3 scaling \eqref{123} the coefficients of \eqref{KPZ} transform as $(\nu,\sigma^2)\mapsto \ep^{1/2}(\nu,\sigma^2)$.  
A naive guess would then be that the fixed point is nothing but the vanishing viscosity ($\nu\searrow 0$) solution of the Hamilton-Jacobi equation 
\begin{equation}
\partial_t h = \lambda(\partial_xh)^2    + \nu \partial_x^2h 
\end{equation} given (for $\lambda>0$) by the Hopf-Lax formula
\begin{equation}\label{eq:invbur}
h(t,x) = \sup_y\{ -\tfrac1{4\lambda t}(x-y)^2 + h_0(y)\}.
\end{equation}
It is \emph{not}. One of the key features of the class is a stationary solution consisting of (non-trivially) time dependent Brownian motion height functions (or discrete versions of it).
But Brownian motions are not invariant for Hopf's formula (see \cite{frachebourgMartin} for the computation).
Our story has a stronger parallel in the dispersionless ($\nu\to 0$) limit of the (integrated) Korteweg-de Vries (KdV) equation
\begin{equation}\label{}
\partial_t h = \lambda(\partial_xh)^2    + \nu \partial_x^3h. 
\end{equation}
Brownian motions \emph{are} invariant for all $\nu$ (at least in the periodic case \cite{QV}).
But as far as we are aware, the zero dispersion limit has only been done on a case by case basis, with no general formulas.
One can imagine that the various schemes lead to different weak solutions of the ill-posed Hamilton-Jacobi equation $\partial_t h = \lambda(\partial_xh)^2$, with only the vanishing viscosity solution being characterized so far, through the entropy condition in its various manifestations.
However, in our situation, where $h(t,x)$ is locally Brownian in $x$, it is far from clear that the notion of weak solution can have any meaning whatsoever.

The KPZ fixed point is given by a variational formula (see Thm.\,\ref{thm:airyvar}), analogous to \eqref{eq:invbur}, but with  a residual forcing noise, the \emph{Airy sheet}.
Unfortunately, our techniques do not allow us to characterize this noise.
Instead, \emph{we obtain a complete description of the Markov field $\fh(t,x)$ itself through the exact calculation of its transition probabilities}.
These transition probabilities are given in \eqref{eq:twosided-ext} and define the invariant Markov process.

The \emph{strong KPZ universality conjecture} (still wide open) is that this fixed point is the limit under the scaling \eqref{123} for any model in the class, loosely characterized by having: 1. Local dynamics; 2.  Smoothing mechanism;  3.  Slope dependent growth rate (lateral growth);  4.  Space-time random forcing with rapid decay of correlations.   Alternatively, convergence to the fixed point can be taken as the  definition of the KPZ universality class.

Universal fixed points are a theme in probability and statistical physics:  2d critical Ising, SLE, Liouville quantum gravity/Brownian map, the Brownian web, and the continuum random tree have offered asymptotic descriptions for huge classes of models.  
In general, these have been obtained as non-linear transformations of Brownian motions or Gaussian free fields, and their description relies to a large degree on symmetry - often conformal invariance. 

In the case of $\phi^4_d$ \cite{simon-Pphi2}, the main tool is perturbation theory.
Even the recent theory of regularity structures \cite{hairerReg}, which makes sense of the KPZ equation \eqref{KPZ}, does so by treating the non-linear term as a kind of perturbation of the linear equation.

In our case, we have a non-perturbative two-dimensional field theory with a skew symmetry, and a solution should not in principle even be
expected.  What saves us is the one-dimensionality of the fixed time problem, and the fact that several discrete models in the class have an explicit description using non-intersecting paths.  Here we work with TASEP, obtaining a complete description of the transition probabilities in a form which allows us to pass transparently to the 1:2:3 scaling limit\footnote{\label{foot:variants}The method works for several variants of TASEP which also have a representation through biorthogonal ensembles, such as discrete time TASEPs and PushASEP, see \cite{mqr-variants}.}.
In a sense, a recipe for the solution of TASEP has existed since the work of \cite{sasamoto}, who discovered a highly non-obvious representation in terms of non-intersecting paths which can in turn be studied using biorthogonal ensembles \cite{borFerPrahSasam}.
However, the biorthogonalization was only implicit, and one had to rely on exact solutions for a couple of special initial conditions to obtain the asymptotic Tracy-Widom distributions $F_{\text{GUE}}$ and $F_{\text{GOE}}$  \cite{tracyWidom,tracyWidom2}, the Baik-Rains distribution $F_{\text{BR}}$ \cite{baikRains}, and their spatial versions, the Airy processes \cite{johanssonShape,johansson,sasamoto,borFerPrahSasam,bfp,baikFerrariPeche}.
In this article, motivated by the probabilistic interpretation of the path integral forms of the kernels in the Fredholm determinant formulas for these processes, and exploiting the skew time reversibility of TASEP, we are able to obtain a general formula in which the TASEP kernel is given by a transition probability of a random walk forced to hit the initial data.

\smallskip
We end this introduction with an outline of the paper and a brief summary of our results.
Sec.\,\ref{subsec:bioth} recalls and solves the biorthogonal representation of TASEP, motivated by the path integral representation, which is derived in the form we need it in Appx.\,\ref{app:proofPathIntTASEP}.
The biorthogonal functions appearing in the resulting Fredholm determinants are then recognized as hitting probabilities in Sec.\,\ref{subsec:hitting}, which allows us to express the kernels in terms of expectations of functionals involving a random walk forced to hit the initial data.
The determinantal formulas for TASEP with arbitrary right-finite initial conditions are in Thm.\,\ref{thm:tasepformulas}.  
In Sec.\,\ref{sec:123}, we pass to the KPZ 1:2:3 scaling limit to obtain determinantal formulas for transition probabilities of the KPZ fixed point, which is defined formally in Def. \ref{def:fixpt}.
This limit is computed using right-finite initial TASEP data, but since we have exact formulas, we can obtain  a very strong estimate (Lem. \ref{cutofflemma}) on the propagation speed of information which allows us to show there is no loss of generality in doing so.
We then work in Sec.\,\ref{sec:tightandmarkov} to show that the Chapman-Kolmogorov equations hold.
This is done by obtaining a uniform bound on the local H\"older $\beta<1/2$ norm of the approximating Markov fields.
The proof is in Appx.\,\ref{sec:reg}. 
Sec.\,\ref{sec:invariant} opens with the introduction of the Brownian scattering transform, which is the main ingredient in our Fredholm determinant formulas for the KPZ fixed point, while Sec.\,\ref{sec:fixedpt} gives
the general formulas for the transition probabilities of the KPZ fixed point (in \eqref{eq:twosided-ext} and Prop. \ref{prop:pathint-fixedpt}); readers mostly interested in the physical implications may wish to skip directly there.   
The rest of Sec.\,\ref{sec:invariant} gives the key properties of the KPZ fixed point: regularity in space and time and local Brownian behavior, various symmetries, variational formulas in terms of the Airy sheet, and equilibrium space-time covariance; we also show how to recover some of the classical Airy processes from our formulas.
Secs.\,\ref{sec:123} and \ref{sec:invariant} are done at the level of pointwise convergence of kernels.
The convergence of the kernels is upgraded to trace class in Appx.\,\ref{app:hs-estimates}, where the remaining technical details are filled in.

\smallskip
So, in a  sense, everything follows once one is able to explictly biorthogonalize TASEP.
We begin there.

\section{TASEP} 
\label{sec:TASEP}

The \emph{totally asymmetric simple exclusion process} (TASEP) consists of particles with positions\linebreak $\cdots <\xx_t(2)<\xx_t(1)< \xx_t(0)< \xx_t(-1)<\xx_t(-2)< \cdots$
on $\zz\cup\{-\infty,\infty\}$ performing totally asymmetric nearest neighbour random walks with exclusion: Each particle independently attempts jumps to the neighbouring site to the right at rate $1$, the jump being allowed only if that site is unoccupied  (see \cite{ligg1} for the non-trivial fact that the process with an infinite number of particles makes sense).
Placing a (necessarily infinite) number of particles at $\pm\infty$ allows for left- or right-finite data with no change of notation, the particles at $\pm\infty$ playing no role in the dynamics.  
We follow the standard practice of ordering particles from the right; for right-finite data the rightmost particle is labelled $1$, unless indicated otherwise.
Let
\begin{equation}
\xx^{-1}_t(u) = \min \{k \in \zz : \xx_t(k) \leq u\}
\end{equation}
denote the label of the rightmost particle which sits to the left of, or at, $u$ at time $t$. 
The \emph{TASEP  height function} associated to $X_t$ is given for $z\in\zz$ by 
\begin{equation}\label{defofh}
 h_t(z) = -2\tsm\left(\xx_t^{-1}(z-1) - \xx_0^{-1}(-1) \right) - z,
\end{equation}  which fixes $h_0(0)=0$.

The height function is a simple random walk path $h_t(z+1) = h_t(z) +\hat{\eta}_t(z)$ with $\hat{\eta}_t(z)=1$ if there is a particle at $z$ at time $t$ and $-1$ if there is no particle at $z$ at time $t$.
The dynamics of $h_t$ is that local max's become local min's at rate $1$; i.e. if $h_t(z) = h_t(z\pm 1) +1$ then $h_t(z)\mapsto h_t(z)-2$ at rate $1$, the rest of the height function remaining unchanged.
One can think of independent rate one Poisson processes, one for each site $z\in \zz$.
At the jump time of the Poisson process at $z$, we check to see if the height function has a local max there.
If it is we flip it to a local min.
We can also easily extend the height function to a continuous function of $x\in \rr$ by linearly interpolating between the integer points. The evolution of an initial height function $h$ is a deterministic function of the underlying Poisson processes, providing a coupling of the evolution from different initial conditions, which preserves the partial order $h\preceq\tilde h$ if $h(x)\leq\tilde h(x)$ for all $x$.

\subsection{Biorthogonal ensembles}
\label{subsec:bioth}

TASEP was first solved by \citet{MR1468391} using the coordinate Bethe ansatz.   
He showed that the transition probability for $N$ particles has a determinantal form,
\begin{equation}\label{eqGreen}
\pp (X_t( 1)= x_1,\ldots,X_t(N)=x_N)=\det(F_{i-j}(t,x_{N+1-i}-X_0(N+1-j)))_{1\leq i,j\leq N}
\end{equation}
with
\begin{equation}\label{eqFn}
F_{n}(t,x)=\frac{(-1)^n}{2\pi \I} \oint_{\Gamma_{0,1}} \d w\,\frac{(1-w)^{-n}}{w^{x-n+1}}e^{t(w-1)},
\end{equation}
where $\Gamma_{0,1}$ is any positively oriented simple loop which includes $w=0$ and $w=1$.
To mesh with our convention of infinitely many particles, we can place particles $X_0(j)$, $j\le 0$ at $\infty$ and $X_0(j)$, $j>N$ at $-\infty$.
Remarkable as it is, this formula is not conducive to asymptotic analysis where we want to consider the later positions of $M\ll N$ of the particles; one has to find an effective way to sum over the positions of the other $N-M$ particles and, at the same time, to get rid of the dependence in $N$ (which needs to go to infinity) of the dimension of the determinant.
This was overcome by \cite{sasamoto,borFerPrahSasam}, who were able to rewrite the right hand side of \eqref{eqGreen} in terms of a certain Lindstr\"om-Gessel-Viennot/Karlin-McGregor scheme \cite{karlinMcGregor,gesselViennot} involving a (signed) non-intersecting line ensemble, and from that obtain the desired probabilities implicitly from the following biorthogonalization problem.

First for a fixed vector $a\in\rr^m$ and indices $n_1<\dotsc<n_m$ we introduce the functions
\begin{equation}\label{eq:defChis}
\chi_a(n_j,x)=\uno{x>a_j},\qquad\bar\chi_a(n_j,x)=\uno{x\leq a_j},
\end{equation}
which we also regard as multiplication operators acting on the space $\ell^2(\{n_1,\dotsc,n_m\}\times\zz)$ (and later on $L^2(\{\fx_1,\dotsc,\fx_m\}\times\rr)$).
We will use the same notation if $a$ is a scalar, writing 
\begin{equation}\label{eq:defChisScalar}
\chi_a(x)=1-\bar\chi_a(x)=\uno{x>a}.
\end{equation}

\begin{thm}[\cite{borFerPrahSasam}] \label{thm:BFPS}
Suppose that TASEP starts with particles labeled $1,2,\dotsc$ (so that, in particular, there is a rightmost particle)\footnote{We are assuming here that $X_0(1)<\infty$ (and thus $X_0(j)<\infty$ for all $j>1$ too); particles at $-\infty$ are allowed.}\footnote{The \cite{borFerPrahSasam} result is stated only for initial conditions with finitely many particles, but the extension to right-finite (infinite) initial conditions is straightforward because, given fixed indices $n_1<n_2<\dotsm<n_m$, the distribution of $\big(X_t(n_1),\dotsc,X_t(n_m)\big)$ does not depend on the initial positions of the particles with indices beyond $n_m$.} and let $n_1,\dotsc,n_m$ be distinct positive integers.
Then for $t>0$ we have
\begin{equation}\label{eq:extKernelProbBFPS}
  \pp\!\left(X_t(n_j)>a_j,~j=1,\dotsc,m\right)=\det\!\left(I-\bar\chi_aK_t\bar\chi_a\right)_{\ell^2(\{n_1,\dotsc,n_m\}\times\zz)},
\end{equation}
where $\det$ is the Fredholm determinant (see \eqref{FD} for the definition),
\begin{equation}\label{eq:Kt}
K_t(n_i,x_i;n_j,x_j)=-Q^{n_j-n_i}(x_i,x_j)\uno{n_i<n_j}+\sum_{k=1}^{n_j}\Psi^{n_i}_{n_i-k}(x_i)\Phi^{n_j}_{n_j-k}(x_j),
\end{equation}
and where\footnote{We have conjugated the kernel $K_t$ from \cite{borFerPrahSasam} by $2^x$ for later convenience (see \eqref{eq:path-int-kernel-TASEPgem} and the discussion following it). The additional $X_0(n-k)$ in the power of 2 in the $\Psi^n_k$'s has also been added for convenience, and is allowed because it just means that the $\Phi^n_k$'s have to be multiplied by $2^{X_0(n-k)}$.}
\begin{equation}
  Q(x,y)=\frac{1}{2^{x-y}}\uno{x>y} 
\end{equation}
and\ts\footnote{Note that, from \eqref{eqFn} and \eqref{eq:defPsi}, $\Psi^n_k(x)=(-1)^k2^{X_0(n-k)-x}F_{-k}(t,x-X_0(n-k))$ for $k\geq0$.}, for $k\leq n-1,$
\begin{equation}\label{eq:defPsi}
\Psi^n_k(x)=\frac1{2\pi\I}\oint_{\Gamma_0}\d w\,\frac{(1-w)^k}{2^{x-X_0(n-k)}w^{x+k+1-X_0(n-k)}}e^{t(w-1)},
\end{equation}
where $\Gamma_0$ is any positively oriented simple loop including the pole at $w=0$ but not the one at $w=1$.
The functions $\Phi_k^{n}(x)$, $k=0,\ldots,n-1$, are defined implicitly by 
\begin{enumerate}[label={\normalfont (\arabic{*})}]
\item The biorthogonality relation $\sum_{x\in\zz}\Psi_k^{n}(x)\Phi_\ell^{n}(x)=\uno{k=\ell}$;
\label{ortho}
\smallskip
\item  $2^{-x}\Phi^n_k(x)$ is a polynomial of degree at most $n-1$ in $x$ for each $k$.\label{poly}
\end{enumerate} 
\end{thm}

The initial data appear in a simple way in the $\Psi_k^n$, which can be computed explicitly.  $Q^m$ is easy,
\begin{equation}
  Q^{m}(x,y)=\frac{1}{2^{x-y}}\binom{x-y-1}{m-1}\uno{x\ge y+m},\label{eq:Qpow}
\end{equation}
and moreover, as operators on $\ell^2(\zz)$,  $Q$ and $Q^m$ are invertible:
\begin{equation}\label{eq:Qinv}
  Q^{-1}(x,y)=2\cdot\uno{x=y-1}-\uno{x=y},\qquad Q^{-m}(x,y)=(-1)^{y-x+m}2^{y-x}\binom{m}{y-x}.
\end{equation}
It is not hard to check \cite[Eq. 3.22]{borFerPrahSasam} that for all $m,n\in \zz$
\begin{equation}
Q^{n-m}\Psi^n_{n-k}=\Psi^m_{m-k}\label{eq:QmnPsi}
\end{equation}
so, in particular, $\Psi^n_{k}=Q^{-k}\Psi^{n-k}_0$.
We introduce
\begin{equation}\label{def:rt}
\R_t= e^{-\frac{t}2( I + \nabla^-)},\qquad \nabla^-f(x) = f(x)-f(x-1),
\end{equation}  
which can also be defined through its integral kernel (valid for all $t\in\rr$)
\begin{equation}\label{eq:kernelrt}
R_t(x,y)=e^{-t}\frac{t^{x-y}}{2^{x-y}(x-y)!}\uno{x\geq y}=\frac{1}{2\pi \I} \oint_{\Gamma_0} \tsm \d w \,\frac{e^{t(w-1)}}{2^{x-y} w^{x - y +1}}.
\end{equation}
Observe that $\Psi^{n}_0=\R_t\delta_{X_0(n)}$ with $\delta_y(x)=\uno{x=y}$.
$Q$ and $R_t$ commute, because the kernels $Q(x,y)$ and $\R_t(x,y)$ only depend on $x-y$, and thus we obtain the decomposition
\begin{equation}\label{eq:Psi-n-chgd}
  \Psi^n_{k}=\R_tQ^{-k}\delta_{X_0(n-k)}.
\end{equation}

The $\Phi_k^n$, on the other hand, are defined only implicitly through \ref{ortho} and \ref{poly}.
Only for a few special cases of initial data (step, see e.g. \cite{dimers}; and periodic \cite{borFerPrahSasam,bfp,bfs}) were they known, and hence only for those choices asymptotics could be performed in the TASEP and related cases, leading to the Tracy-Widom $F_{\text{GUE}}$ and $F_{\text{GOE}}$ one-point distributions, and then later to the Airy processes for multipoint distributions.

We are now going to solve for the $\Phi_k^n$ for any initial data.
Let us explain how this can be done starting just from the solution for step initial data ($X_0(i)=-i$, $i\geq1$).
The derivation is based on two main ingredients.
The first is a \emph{path integral} version of the extended kernel formula \eqref{eq:extKernelProbBFPS}  for the TASEP finite dimensional distributions (see Appx.\,\ref{app:proofPathIntTASEP} for the proof):
\begin{multline}
\pp\!\left(X_t(n_j)>a_j,~j=1,\dotsc,m\right)
\\=\det\!\big(I-K^{(n_m)}_{t}(I-Q^{n_1-n_m}\P_{a_1}Q^{n_2-n_1}\P_{a_2}\dotsm Q^{n_m-n_{m-1}}\P_{a_m})\big)_{L^2(\zz)}\label{eq:path-int-kernel-TASEPgem}
\end{multline}
for $n_1<n_2<\dotsm<n_m$, where $K^{(n)}_t=K_t(n,\cdot;n,\cdot)$.
Such formulas were first obtained in \cite{prahoferSpohn} for the Airy$_2$ process (see also \cite[App. A]{prolhacSpohn}), and later extended to the Airy$_1$ process in \cite{quastelRemAiry1} and then to a very wide class of processes in \cite{bcr}.
The key is to recognize the kernel $Q(x,y)$ as the transition probabilities of a random walk (which is why we conjugated the \cite{borFerPrahSasam} kernel by $2^x$) and then $\P_{a_1}Q^{n_2-n_1}\P_{a_2}\dotsm Q^{n_m-n_{m-1}}\P_{a_m}(x,y)$ as the probability that this walk goes from $x$ to $y$ in $n_m-n_1$ steps, staying above $a_1$ at time $n_1$, above $a_{2}$ at time $n_2$, etcetera.

The second ingredient is the \emph{skew time reversibility of TASEP}.  From the description just after \eqref{defofh}, it is clear that the evolution rule for the height function backwards in time is the same as that of minus the height function forward in time.
We use it in the form\footnote{Evolve an initial height function $f$ forward through a realization of the Poisson processes from time $0$ to $t$ and call the result $h_{0\to t}(f)$ (see the paragraph right after \eqref{defofh}). Evolve $g$ backwards through the same realization from time $t$ to $0$ to obtain $h_{t\to 0}(g)$. Both maps preserve the partial order $h\preceq\tilde h$, so we have, for each fixed realization of the Poisson processes, $h_{0\to t}(f)\preceq g\Longleftrightarrow f\preceq h_{t\to 0}(g)$. On the other hand, the standard time reversibility property for TASEP says that $h_{t\to 0}(g)$ has the same distribution as $-h_{0\to t}(-g)$. This proves \eqref{timerev}.}
\begin{equation}\label{timerev}
\pp_{f}\big(h_{t}(x)\le g(x),~\,x\in\zz\big) = \pp_{-g}\big(h_{t}(x)\le -f(x),~\,x\in \zz\big),
\end{equation}
the subscript indicating the initial data.

Suppose we have the solution \eqref{eq:Kt} for step initial data centered at $x_0$, which means $h_0(x)$ is the peak $-|x-x_0|$.  
The multipoint distribution at time $t$ is given by \eqref{eq:path-int-kernel-TASEPgem}, but we can use \eqref{timerev} to reinterpret it as the one point distribution of $h_t$ at  $x_0$, starting from an initial condition built out of a series of $m$ peaks centered at $n_1,\dotsc,n_m$ with heights $-a_1,\dotsc,-a_m$.
From this we can guess a formula for the multipoint distributions by extending the resulting kernel in the usual way, as in \eqref{eq:Kt}.
This last step is not fully justified at this stage, but we can use the resulting formula to simply guess the form of the biorthogonal functions $\Phi^n_k$, based on the representation of the kernel in \eqref{eq:path-int-kernel-TASEPgem} in terms of the hitting probability for a random walk.  
Thm.\,\ref{thm:BFPS} is then set up perfectly, because it allows us to easily prove that the guess is correct.

This gives us our key result.

\begin{thm}\label{thm:h_heat}
Fix $0\le k <n$ and consider particles at $\xx_0(1)>\xx_0(2)>\dotsm>\xx_0(n)$. 
Let $h^n_k(\ell, z)$ be the unique solution to the initial--boundary value problem for the backwards heat equation
\mathtoolsset{showonlyrefs=false}
\begin{subnumcases}{\label{bhe}}
(\Qt)^{-1}h^n_k(\ell,z)=h^n_k(\ell+1,z) &  $\ell<k,\,z \in \zz$;\label{bhe1}\\ 
h^n_k(k,z)=2^{z-X_0(n-k)} & $z \in \zz$;\label{bhe2}\\ 
h^n_k(\ell,X_0(n-\ell))= 0 & $\ell<k$.\label{bhe3}
\end{subnumcases} 
\mathtoolsset{showonlyrefs=true}
Then the functions $\Phi^n_k$ from Thm.\,\ref{thm:BFPS} are given by
\begin{equation}\label{eq:defPhink}
  \Phi^n_k(z) = (\R_t^{*})^{-1}h^n_k(0,\cdot)(z)=\sum_{y\in \zz} h^{n}_k(0,y)\R_t^{-1}(y,z).
\end{equation}
Here $Q^*(x,y)=Q(y,x)$ is the kernel of the adjoint of $Q$ (and likewise for $\R_t^*$).
\end{thm}

\begin{rem} 
It is not true in general that $\Qt h^{n}_k(\ell+1,z)=h^{n}_k(\ell,z)$.
In fact, $\Qt h^{n}_{k}(k,z)$ is divergent.
\end{rem}

\begin{proof}
The existence and uniqueness of solutions of \eqref{bhe1}--\eqref{bhe3} is an elementary consequence of the fact that $\ker (Q^*)^{-1}$ has dimension $1$ and it is spanned by the function $2^z$, which allows us to march forwards from the initial condition $h^n_k(k,z)=2^{z-X_0(n-k)}$ uniquely solving the boundary value problem $h^n_k(\ell,X_0(n-\ell))= 0$ at each step\footnote{As a linear operator, $Q^*$ acts on $\ell^1(\zz)$, and it is there that $Q^*$ is invertible, with inverse $(Q^*)^{-1}$ defined by \eqref{eq:Qinv}. However, \eqref{bhe1}--\eqref{bhe3} are being solved in the space of all sequences, in which the matrix $(Q^*)^{-1}$ does have a non-trivial kernel.}.

Before turning to the proof of \eqref{eq:defPhink} we need to prove that $2^{-x}h^{n}_{k}(0,x)$ is a polynomial of degree at most $k$. We proceed by induction.
Note first that, by \eqref{bhe2}, $2^{-x}h^{n}_{k}(k,x)$ is a polynomial of degree 0.
Assume now that $\tilde h^n_k(\ell,x)\coloneqq2^{-x}h^{n}_{k}(\ell,x)$ is a polynomial of degree at most $k-\ell$ for some $0<\ell\leq k$.
By \eqref{bhe1} and \eqref{eq:Qinv} we have
\begin{equation}\label{eq:tildeh-eq}
  \tilde h^{n}_{k}(\ell,y)=2^{-y}(\Qt)^{-1}h^{n}_{k}(\ell-1,y)=\tilde h^{n}_{k}(\ell-1,y-1)-\tilde h^{n}_{k}(\ell-1,y).
\end{equation}
Taking $x\geq X_0(n-\ell+1)$ and summing \eqref{eq:tildeh-eq} gives $\tilde h^{n}_{k}(\ell-1,x)=-\sum_{y=X_0(n-\ell+1)+1}^{x}\tilde h^{n}_{k}(\ell,y)$ thanks to \eqref{bhe3}, which by the inductive hypothesis is a polynomial of degree at most $k-\ell+1$ in $x$.
Similarly, taking $x<X_0(n-\ell+1)$ we get $\tilde h^{n}_{k}(\ell-1,x)=\sum_{y=x+1}^{X_0(n-\ell+1)}\tilde h^{n}_{k}(\ell,y)$, which again is a polynomial of degree at most $k-\ell+1$.
The two polynomials are the same, and thus the claim follows.

Now we check the biorthogonality condition \ref{ortho}. 
Using \eqref{eq:Psi-n-chgd} we get
\begin{align}
	\sum_{z\in\zz}\Psi^n_\ell(z)\Phi^n_k(z)
	&=\sum_{z_1,z_2\in\zz}\sum_{z\in\zz}\R_t(z,z_1)Q^{-\ell}(z_1,X_0(n-\ell))h^{n}_{k}(0,z_2)\R_t^{-1}(z_2,z)
	\\&=\sum_{z\in\zz}Q^{-\ell}(z,X_0(n-\ell))h^{n}_{k}(0,z)
	=(\Qt)^{-\ell}h^{n}_{k}(0,X_0(n-\ell)),
\end{align}
where in the first equality we have used the decay of $R_t$ and the fact that $2^{-x}h^n_k(0,x)$ is a polynomial together with the fact that the $z_1$ sum is finite to apply Fubini.
For $\ell\leq k$, we use the boundary condition $h^n_k(\ell,X_0(n-\ell))= \uno{\ell=k}$, which is both \eqref{bhe2} and \eqref{bhe3}, to get
\begin{equation}
(\Qt)^{-\ell}h^{n}_{k}(0,X_0(n-\ell))=h^{n}_{k}(\ell,X_0(n-\ell))=\uno{k=\ell}.
\end{equation}
For $\ell>k$, we use \eqref{bhe1}, \eqref{bhe2}, and $2^z\in\ker{(\Qt)^{-1}}$:
\begin{equation}
(\Qt)^{-\ell}h^{n}_{k}(0,X_0(n-\ell))=(\Qt)^{-(\ell-k-1)}(\Qt)^{-1}h^{n}_{k}(k,X_0(n-\ell))=0.
\end{equation} 

To finish the proof we need to show that $\Phi^n_k$ satisfies condition \ref{poly} of Thm.\,\ref{thm:BFPS}.
By \eqref{eq:kernelrt} we have $2^{-x}\Phi^n_k(x)=\sum_{y\geq0}\frac{e^{t}}{y!}(-t)^y\ts2^{-(x+y)}h^{n}_{k}(0,x+y)$.
It is enough to note then that, since $2^{-z}h^{n}_{k}(0,z)$ is a polynomial of degree at most $k$, this sum is absolutely convergent and is a polynomial of degree at most $k$ in $x$ as well.
\end{proof}

\subsection{Representation of the kernel as a hitting probability} 
\label{subsec:hitting}

Let
\begin{equation}
	G_{0,n}(z_1,z_2)=\sum_{k=0}^{n-1}Q^{n-k}(z_1,X_0(n-k))h^{n}_{k}(0,z_2)\label{eq:defG}
\end{equation}
with $h^n_k$ the solution of \eqref{bhe}.
Then from Thms. \ref{thm:BFPS} and \ref{thm:h_heat} and using \eqref{eq:Psi-n-chgd} we have
\begin{equation}\label{eq:Kt-decomp}
	K_t(n_i,\cdot;n_j,\cdot)=-Q^{n_j-n_i}\uno{n_i<n_j}+\R_t Q^{-n_i}G_{0,n_j}\R_t^{-1}.
\end{equation}
  
Below the ``curve'' $\big(X_0(n-\ell)\big)_{\ell=0,\dotsc,n-1}$, the functions $h^n_k(\ell,z)$ have an important physical interpretation.
$\Qt(x,y)$ are the transition probabilities of a random walk $B_m^*$ with Geom$[\frac12]$ jumps (strictly) to the right\footnote{We use the notation $B^*_m$ to distinguish it from the walk $B_m$ with transition matrix $Q$ which will appear shortly.\label{footnote:walks}}.
For $0\leq\ell\leq k\leq n-1$, define stopping times
\begin{equation}
\tau^{\ell,n}=\min\{m\in\{\ell,\dotsc,n-1\}\!:\,B^*_m> X_0({n-m})\},
\end{equation}
with the convention that $\min\emptyset=\infty$. 
Then for $z\leq X_0(n-\ell)$ we have \noeqref{eq:hnk-prob}
\begin{equation}\label{eq:hnk-prob}
h^n_k(\ell,z)=\pp_{B^*_{\ell-1}=z}\big(\tau^{\ell,n}=k\big).
\end{equation}
This can be proved by checking that, with this definition, $h^n_k(\ell,z)$ satisfies \eqref{bhe2} and \eqref{bhe3} while for $z\leq X_0(n-\ell-1)$ it also satisfies \eqref{bhe1} and it is given by $2^z$ times a polynomial in $z$ of degree at most $n-1$; the conclusion now follows from the fact, shown in the proof of Thm.\,\ref{thm:h_heat}, that $2^{-z}h^n_k(\ell,z)$ is a polynomial of degree at most $n-1$.

From the memoryless property of the geometric distribution we have for all $z\leq X_0(n)$ that \noeqref{eq:memoryless}
\begin{equation}\label{eq:memoryless}
\pp_{B^*_{-1}=z}\big(\tau^{0,n}=k,\,B^*_k=y\big)=2^{X_0(n-k)-y}\ts\pp_{B^*_{-1}=z}\big(\tau^{0,n}=k\big),
\end{equation}
and as a consequence we get, for $z_2\leq X_0(n)$,
\begin{equation}
\begin{split}
  G_{0,n}(z_1,z_2)&=\sum_{k=0}^{n-1}\pp_{B^*_{-1}=z_2}\big(\tau^{0,n}=k\big)(\Qt)^{n-k}(X_0(n-k),z_1)\\
  &=\sum_{k=0}^{n-1}\sum_{z>X_0(n-k)}\pp_{B^*_{-1}=z_2}\big(\tau^{0,n}=k,\,B^*_k=z\big)(\Qt)^{n-k-1}(z,z_1)\\
  &=\pp_{B^*_{-1}=z_2}\big(\tau^{0,n}<n,\,B^*_{n-1}=z_1\big),
\end{split}\label{eq:G-formula}
\end{equation}
which is \emph{the probability for the walk starting at $z_2$ at time $-1$ to end up at $z_1$ after $n$ steps, having hit the curve $\big(X_0(n-m)\big)_{m=0,\dotsc,n-1}$ in between}.

The next step is to obtain an expression along the lines of \eqref{eq:G-formula} which holds for all $z_2$, and not just $z_2\leq X_0(n)$.  
We begin by observing that for each fixed $y_1$ and $n\ge 1$, $2^{-y_2}Q^n(y_1,y_2)$ extends in $y_2$ to a  polynomial $2^{-y_2}\mQ^{(n)}(y_1,y_2)$ of degree $n-1$ with 
\begin{equation}\label{eq:QExt}
\mQ^{(n)}(y_1,y_2)= \frac{1}{2\pi \I} \oint_{\Gamma_0} \d v\,\frac{(1+v)^{y_1 - y_2 -1}}{2^{y_1-y_2} v^n}= \frac{(y_1 - y_2 - 1)_{n-1}}{2^{y_1 - y_2} (n-1)!},
\end{equation}
where $(x)_k = x (x-1) \cdots (x-k+1)$ for $k > 0$ and $(x)_0=1$ is the \emph{Pochhammer symbol}. 
Note that
\begin{equation}
\mQ^{(n)}(y_1,y_2)=Q^n(y_1,y_2)\qquad\text{for}\quad\,y_1-y_2\geq1.\label{eq:QQext}
\end{equation}
Using \eqref{eq:Qinv} and \eqref{eq:QExt}, we have 
\begin{equation}\label{eq:QinvQext}
Q^{-1}\mQ^{(n)}=\mQ^{(n)}Q^{-1}=\mQ^{(n-1)}\quad\text{for}\quad n>1,\qquad\text{but}\quad Q^{-1}\mQ^{(1)}=\mQ^{(1)}Q^{-1}=0.
\end{equation}
Note also that $\mQ^{(n)}\mQ^{(m)}$ is divergent, so the $\mQ^{(n)}$ are no longer a group like the $Q^n$.
Let 
\begin{equation}
\tau= \min\{ m\ge 0: B_m> X_0(m+1)\},\label{eq:deftau}
\end{equation}
where $B_m$ is now a random walk with transition matrix $Q$ (that is, $B_m$ has Geom$[\tfrac{1}{2}]$ jumps strictly in the negative direction).
Using this stopping time and the extension of $Q^m$ we obtain:

\begin{lem}\label{lem:G0n-formula}
For the kernel defined in \eqref{eq:defG} and all $z_1,z_2\in\zz$ we have
\begin{equation}\label{eq:G0n-formula34}
G_{0,n}(z_1,z_2) = \ee_{B_0=z_1}\!\left[ \mQ^{(n - \tau)}(B_{\tau}, z_2)\uno{\tau<n}\right].
\end{equation}
\end{lem}

\begin{proof}
For $z_2\leq X_0(n)$, \eqref{eq:G-formula} can be written as
\begin{multline}\label{eq:G0napp1}
G_{0,n}(z_1,z_2)=\pp_{B^*_{-1}=z_2}\big(\tau^{0,n}\leq n-1,\,B^*_{n-1}=z_1\big)=\pp_{B_{0}=z_1}\big(\tau\leq n-1,B_{n}=z_2\big)\\
 =\sum_{k=0}^{n-1}\sum_{z>X_0(k+1)}\pp_{B_{0}=z_1}\big(\tau=k,\,B_{k}=z\big)Q^{n-k}(z,z_2)
 =\ee_{B_0=z_1}\!\left[Q^{n-\tau}\big(B_{\tau},z_2\big)\uno{\tau<n}\right].
\end{multline}
We claim that the right hand side of \eqref{eq:G0n-formula34} equals $G_{0,n}(z_1,z_2)$ for all $z_2\leq X_0(n)$.
To see this, note from the last equality in \eqref{eq:G0napp1} that we only need to check $\P_{X_0(k+1)}\mQ^{(k+1)}\bar\P_{X_0(n)}=\P_{X_0(k+1)}{Q^{k+1}}\bar\P_{X_0(n)}$ for $k=0,\dotsc,n-1$ which, since $X_0(k+1)-X_0(n)\geq n-k-1$, follows from \eqref{eq:QQext}.

To complete the proof, recall that we showed that $2^{-z_2}h^n_k(0,z_2)$ is a polynomial of degree at most $k$ in $z_2$, so from \eqref{eq:defG} we have that $2^{-z_2}G_{0,n}(z_1,z_2)$ satisfies the same (for every fixed $z_1$).
It is straightforward to check that the right hand side of \eqref{eq:G0n-formula34} also satisfies this (because $\mQ^{(m)}(z_1,z_2)$ does), and thus since it coincides with $G_{0,n}(z_1,z_2)$ at infinitely many $z_2$'s, we deduce the equality in \eqref{eq:G0n-formula34}.  
\end{proof}

\subsection{Formulas for TASEP with right-finite initial data} 
\label{subsec:TASEPformulas}

Let\footnote{$(\SN_{-t,n})^*$ should be thought of as a version of $\SM_{-t,n}$ (analytic in $z_2-z_1$) made from the other pole in the contour integral in \eqref{eqFn}. In fact, if one changes variables $w\longmapsto1-w$ in \eqref{def:sm} and then changes $n$ to $-n$ then one gets \eqref{def:sn} (with $z_1$ and $z_2$ interchanged), except that the integration is along a loop enclosing only $1$ instead of only $0$. Our choice of taking an adjoint in the definition in \eqref{def:sm} is made just for later convenience.}, for $n\geq1$,
\begin{align}
 \SM_{-t,-n}(z_1,z_2) &= (e^{-\frac{t}2 \nabla^-}\!Q^{-n})^*(z_1,z_2)  = \frac{1}{2\pi\I} \oint_{\Gamma_0}\d w\, \frac{(1-w)^{n}}{2^{z_2-z_1} w^{n +1 + z_2 - z_1}}e^{t(w-1/2)},\label{def:sm}\\
 \SN_{-t,n} (z_1,z_2) &=  \mQ^{(n)}e^{\frac{t}2 \nabla^-} (z_1,z_2)= \frac{1}{2 \pi \I} \oint_{\Gamma_{0}} \d w\,\frac{(1-w)^{z_2-z_1 + n - 1}}{2^{z_1-z_2} w^{n}} e^{t(w-1/2)};\label{def:sn}
\end{align}
the contour integral formulas come from \eqref{eq:Qinv}, \eqref{eq:kernelrt}, and \eqref{eq:QExt}.
As before, $\Gamma_0$ is a simple counterclockwise loop around $0$ not enclosing $1$.
Define also, for $n\geq0$,
\begin{equation}\label{eq:sepi23}
{\SN}_{-t,n}^{\oepi(X_0)}(z_1,z_2) = \ee_{B_0=z_1}\!\left[ \SN_{-t,n - \tau}(B_{\tau}, z_2)\uno{\tau<n}\right].
\end{equation}
The superscript $\oepi(X_0)$ refers to the fact that $\tau$ (defined in \eqref{eq:deftau}) is the hitting time of the strict epigraph\footnote{The \emph{strict epigraph} of a discrete curve $\big(g(m)\big)_{m\geq0}$ is the set $\oepi(g)=\big\{(m,y)\!:m\geq0,\,y>g(m)\big\}$ (see also Sec.\,\ref{UC}).} of the curve $\big(X_0(k+1)\big)_{k=0,\dotsc,n-1}$ by the random walk $B_k$.

\begin{rem}  
$M_m= \SN_{-t,n - m}(B_m, z_2)$ is \emph{not} a martingale, because $Q\mQ^{(n)}$ is divergent.  
So one cannot apply the optional stopping theorem to evaluate \eqref{eq:sepi23}.
The right hand side of  \eqref{eq:sepi23} is only finite because the curve $\big(X_0(k+1)\big)_{k=0,\dotsc,n-1}$ cuts off the divergent sum.
\end{rem}

We are now in position to state the general solution of TASEP with right-finite initial data. 

\begin{thm}{\bf (TASEP formula for right-finite initial data)}\label{thm:tasepformulas}
\enspace Assume that the TASEP initial condition $X_0$ satisfies $X_0(j)=\infty$ for all $j\le 0$.
Then for any distinct positive integers $n_1,\dotsc,n_m$ and $t\geq0$, 
\begin{equation}\label{eq:extKernelProb}
  \pp\!\left(X_t(n_j)>a_j,~j=1,\dotsc,m\right)=\det\!\left(I-\bar\chi_aK^\TASEP_t\bar\chi_a\right)_{\ell^2(\{n_1,\dotsc,n_m\}\times\zz)},
\end{equation}
where $K^\TASEP_t$ is the operator on $\ell^2(\{n_1,\dotsc,n_m\}\times\zz)$ with kernel given by
\begin{equation}\label{eq:Kt-2}
K^\TASEP_t(n_i,\cdot;n_j,\cdot)=-Q^{n_j-n_i}\uno{n_i<n_j}+(\SM_{-t,-{n}_i})^*{\SN}_{-t,n_j}^{\oepi(X_0)}.
\end{equation}
The path integral version \eqref{eq:path-int-kernel-TASEPgem} (with $K^{(n)}_t=K^\TASEP_t(n,\cdot;n,\cdot)$) also holds.  
\end{thm}

\noindent
\begin{minipage}{\textwidth}
\begin{rem}\label{rem:afterTASEPformula}
\leavevmode
\begin{enumerate}[label=\arabic*.,itemsep=3pt,leftmargin=30pt]
\item By shifting the indices of the particles, the theorem allows us to write a formula for any right-finite initial data $X_0$ with $X_0(j)=\infty$ for $j\leq\ell$, any $\ell\in\zz$.
In fact, defining the shift operator
\begin{equation}\label{eq:shift}
\theta_\ell\tts g(u)=g(u+\ell),
\end{equation}
we have the trivial identity
\begin{equation}\label{eq:transInv}
  \pp_{X_0}\big(X_t(n_j)>a_j,~j=1,\dotsc,m\big)=\pp_{\theta_{\ell}X_0}\big(X_t(n_j-\ell)>a_j,~j=1,\dotsc,m\big).
\end{equation}
\item Note that, by definition, $\SN_{-t,n_j}^{\oepi(X_0)}(y,z)=\SN_{-t,n_j}(y,z)$  for $y>X_0(1)$, so \eqref{eq:Kt-2} can also be written as 
\begin{multline}
\mbox{}\hskip0.32in K^\TASEP_t(n_i,\cdot;n_j,\cdot)\\
=-Q^{n_j-n_i}\uno{n_i<n_j}+(\SM_{-t,-{n}_i})^*\P_{X_0(1)}\SN_{-t,{n}_j}+(\SM_{-t,-{n}_i})^*\bar{\P}_{X_0(1)}{\SN}_{-t,n_j}^{\oepi(X_0)}.\label{eq:Kt-2-alt}
\end{multline}
\end{enumerate}
\end{rem}
\end{minipage}

\begin{proof}
Consider first right-finite initial data. 
If $X_0(1)<\infty$ then we are in the setting of the above sections and formulas \eqref{eq:extKernelProb}--\eqref{eq:Kt-2} follow directly from the above definitions together with \eqref{eq:Kt-decomp} and Lem. \ref{lem:G0n-formula}.
If $X_0(i)=\infty$ for $i=1,\dotsc,\ell$ and $X_0(\ell+1)<\infty$ then it is enough to consider $n_j>\ell$ for $j=1,\dotsc,m$, and then from \eqref{eq:transInv} we have $\pp_{X_0}\tsm\big(X_t(n_j)>a_j,~j=1,\dotsc,m\big)=\det\!\big(I-\bar\chi_aK^{(\ell)}_t\bar\chi_a\big)_{\ell^2(\{n_1,\dotsc,n_m\}\times\zz)}$ with $K^{(\ell)}_t(n_i,\cdot;n_j,\cdot)=-Q^{n_j-n_i}\uno{n_i<n_j}+(\SM_{-t,-{n}_i+\ell})^*\SN_{-t,n_j-\ell}^{\oepi(\theta_{\ell} X_0)}$.
Using now that, for $n_i,n_j>\ell$, $(\SM_{-t,-{n}_i+\ell})^*=(\SM_{-t,-{n}_i})^*Q^{\ell}$ and 
\begin{equation}\label{eq:epishift}
Q^{\ell}\SN_{-t,n_j-\ell}^{\oepi(\theta_{\ell} X_0)}=\SN_{-t,n_j}^{\oepi(X_0)},
\end{equation}
which follows from the definition of $\SN_{-t,n}^{\oepi(X_0)}$, \eqref{eq:QinvQext}, and the fact that $\theta_{\ell}X_0(j)=\infty$ for $j=1,\dotsc,\ell$, we see that \eqref{eq:Kt-2} still holds in this case.

The path integral formula \eqref{eq:path-int-kernel-TASEPgem} is proved in Appx.\,\ref{app:proofPathIntTASEP}, and follows from a variant of \cite[Thm. 3.3]{bcr} proved in Appx.\,\ref{app:altBCR}.
\end{proof}

\begin{ex}{\bf (Step initial data)}\label{ex:step}
\enspace Consider TASEP with step initial data, i.e. $X_0(i) = -i$ for $i \geq 1$.
If we start the random walk in \eqref{eq:sepi23} from $B_0 = z_1$ below the curve, i.e. $z_1 \leq -1$, then the random walk clearly never hits the epigraph. 
Hence, $\bar\P_{X_0(1)}{\SN}_{-t,n}^{\oepi(X_0)} \equiv 0$ and the last term in \eqref{eq:Kt-2-alt} vanishes. 
For the second term in \eqref{eq:Kt-2-alt} we have, from \eqref{def:sm} and \eqref{def:sn},
\begin{align}
 (\SM_{-t,-{n}_i})^*\P_{X_0(1)}\SN_{-t,{n}_j}(z_1, z_2) = \frac{1}{(2\pi\I)^2} \oint_{\Gamma_0}\d w \oint_{\Gamma_0}\d v\, \frac{(1-w)^{n_i} (1-v)^{n_j + z_2}}{2^{z_1-z_2} w^{n_i + z_1 +1} v^{n_j}} \frac{e^{t(w + v-1)}}{1 - v - w}.
\end{align}
Using this in \eqref{eq:Kt-2-alt} yields exactly the formula derived previously in the literature (see e.g. \cite[Eq. 82]{dimers}), modulo the conjugation by $2^{z_2-z_1}$.
\end{ex}

\begin{ex}{\bf ($\bm{2}$-periodic initial data)}
\enspace We are interested now in TASEP with 2-periodic initial data $X_0(i)=2i$, $i\in\zz$ (we consider more general periods in the next example).
To obtain a formula for the kernel in this case we will approximate by considering first the finite periodic initial data $X_0(i) = 2(N-i)$ for $i=1,\dotsc,2N$.
For simplicity we will compute only $K^{(n)}_t=K^\TASEP(n,\cdot;n,\cdot)$. 

\noindent We start by computing ${\SN}_{-t,n}^{\oepi(X_0)}$.
Observe that for $\lambda>-\log(2)$ we have that $\big(e^{\lambda B_m-m\varphi(\lambda)}\big)_{m\geq0}$, with $\varphi(\lambda)=-\log(2e^\lambda-1)$ the logarithm of the moment generating function of a negative Geom$[\frac12]$ random variable, is a martingale.
Thus if $z\leq 2(N-1)$, $\ee_{B_0=z}[e^{\lambda B_\tau-\tau\varphi(\lambda)}]=e^{\lambda z}$.
But it is easy to see from the definition of $X_0$ that if the walk starts below the curve then $B_\tau$ is necessarily $2(N-(\tau+1))+1$, so we have $\ee_{B_0=z}[e^{-2 \lambda \tau}(2e^{\lambda}-1)^\tau]=e^{(z-2N+1)\lambda}$.
Introducing a new variable $\eta\in(0,1)$ through $\lambda=\log(\eta^{-1}(1+\sqrt{1-\eta}))$ yields $\ee_{B_0=z}[\eta^\tau]=\left(\eta^{-1}(1+\sqrt{1-\eta})\right)^{z-2N+1}$.
As a consequence we obtain \mbox{$\pp_{B_0=z}(\tau=k)=\lim_{\eta\to0}\frac1{k!}\frac{\d^k}{\d\eta^k}\big(\frac{1+\sqrt{1-\eta}}{\eta}\big)^{z-2N+1}$}, and then using Cauchy's integral formula we can compute ${\SN}_{-t,n}^{\oepi(X_0)}(z_1,z_2)$ from \eqref{def:sn} and \eqref{eq:sepi23} as
\[\textstyle\lim_{\eta\to0}\frac1{(2\pi\I)^2}\oint_{\gamma_r} \d w\oint_{\eta+\gamma_{r}}\d\xi\,\sum_{k=0}^{n-1}\left(\frac{1+\sqrt{1-\xi}}{\xi}\right)^{z_1-2N+1}\!\!\frac1{(\xi-\eta)^{k+1}}\frac{(1-w)^{z_2-2N+n+k}}{2^{2(N-k)-1-z_2}w^{n-k}}e^{t(w-\frac12)},\]
where $\gamma_r$ is a circle of radius $r$ centered at the origin and we take $\eta<r<3/4$.
The $\eta\to0$ limit is now straightforward to compute, and since the resulting integrand is analytic in $\xi$ for $k<0$, we may extend the sum to $k=-\infty$ and then compute the sum (using that $|4\xi^{-1}w(1-w)|>1$ for our choice of $r$) to get
$\frac1{(2\pi\I)^2}\oint_{\gamma_r} \d w\oint_{\gamma_{r}}\d\xi\left(\frac{1+\sqrt{1-\xi}}{\xi}\right)^{z_1-2N+1}\!\!\frac{(1-w)^{z_2-2(N-n)}}{2^{2(N-n)-1-z_2}\xi^n}\frac1{4w(1-w)-\xi}\ts e^{t(w-\frac12)}$.
Now we introduce the change of variables $\xi=4v(1-v)$, which is locally one-to-one near $v=1$.
For small $r$, if $v$ lies in $1+\gamma_r$ then $4v(1-v)$ lies approximately in $\gamma_{4r}$ so we may adjust the contours to get that the last integral equals $\frac1{(2\pi\I)^2}\oint_{\gamma_r} \d w\oint_{1+\gamma_{r'}}\d v\frac{(1-w)^{z_2-2(N-n)}}{2^{y-z_2}v^n(1-v)^{z_1-2N+n+1}}\frac{1-2v}{w(1-w)-v(1-v)}\ts e^{t(w-\frac12)}$ with $r'\sim r/4$.
From this and \eqref{def:sm} we may compute the product $(\SM_{t,-{n}})^*\bar\P_{2(N-1)}\SN_{t,n}^{\oepi(X_0),\eta}(z_1,z_2)$, which equals
\[\textstyle\frac1{(2\pi\I)^3}\oint_{\gamma_r}\d u\oint_{\gamma_r} \d w\oint_{1+\gamma_{r'}}\d v\ts \frac{(1-w)^{z_2-2(N-n)}(1-u)^n}{2^{z_1-z_2}v^n(1-v)^{n-1}u^{z_1-2N+n+2}}\frac{1-2v}{(1-u-v)[w(1-w)-v(1-v)]}\ts e^{t(w+u-1)}.\]
Since $1-v$ lies inside $\gamma_r$, the $w$ integral has a pole at $w=1-v$, and computing the residue yields
\begin{equation}\label{eq:periodicPiece1}
\textstyle\frac1{(2\pi\I)^2}\oint_{\gamma_r}\d u\oint_{1+\gamma_{r'}}\d v\ts \frac{v^{z_2-2N+n}(1-u)^n}{2^{z_1-z_2}(1-v)^{n-1}u^{z_1-2N+n+2}}\frac{1}{1-u-v}\ts e^{t(u-v)}.
\end{equation}
On the other hand, a simpler computation (as in the previous example) shows that the other term making up $K_t^{(n)}(z_1,z_2)=K^\TASEP_t(n,z_1;n,z_2)$ in \eqref{eq:Kt-2-alt}, namely $(\SM_{t,-{n}})^*\P_{2(N-1)}\SN_{t,n}(z_1,z_2)$, equals
\begin{equation}\label{eq:periodicPiece2}
\textstyle\frac1{(2\pi\I)^2}\oint_{\gamma_r}\d u\oint_{\gamma_{r''}} \d v\ts \frac{(1-v)^{z_2-2(N-n)+1}(1-u)^n}{2^{z_1-z_2}u^{z_1-2N+n+2}v^n}\frac{1}{1-u-v}\ts e^{t(u+v-1)},
\end{equation}
where we take $r''<r$.
Hence $K_t^{(n)}(z_1,z_2)$ in this case is given as the sum of the last two integrals.

\noindent In order to obtain the kernel which yields the distribution of the $m$-th particle for the full $2$-periodic initial condition, $K^{2\uptext{-prd},(m)}_t$, we proceed as in \cite{borFerPrahSasam}, using the last formula and focusing on particles which start at a fixed distance from the origin, that is $n=N+m$ with $m$ fixed (which corresponds to the particle that started at $-2m$), and taking $N\to\infty$.
To this end, for fixed $z_1$ take $N\geq z_1+m+2$, so that $u=0$ is not a pole in both \eqref{eq:periodicPiece1} and \eqref{eq:periodicPiece2}.
We see now that \eqref{eq:periodicPiece2} vanishes, because the $u$ integrand is analytic given our choice of contours.
On the other hand, for \eqref{eq:periodicPiece1} we have that $1-v$ lies inside $\gamma_r$ so the $u$ integral has a pole at $u=1-v$, and computing the residue yields
\begin{equation}
\textstyle K^{2\uptext{-prd},(m)}_t(z_1,z_2)=-\frac1{2\pi\I}\oint_{1+\gamma_{r'}}\d v\ts \frac{v^{z_2+2m}}{2^{z_1-z_2}(1-v)^{z_1+2m+1}}\ts e^{t(1-2v)}.\label{eq:Kn2per}
\end{equation}
This is exactly the kernel derived (modulo the conjugation $2^{z_2-z_1}$ and after a simple change of variables) in \cite[Thm. 2.2]{borFerPrahSasam}.
\end{ex}

\begin{ex}{\bf ($\bm{\ell}$-periodic initial data)}\label{example210}
\enspace Now we turn to TASEP with $\ell$-periodic initial data, given by $X_0(i)=-\ell\tts i$, $i\in\zz$, with $\ell\geq 2$.
In contrast to the last example, the value of $B_\tau$ is not fixed as a function of $\tau$ if $\ell > 2$, and thus computing $\SN^{\epi(X_0)}_{t,-n}$ becomes more complicated.
But the hitting probabilities for $B^*$ can be computed in a way similar to the last example, so in this case it is simpler to compute the biorthogonal functions $\Phi^n_k$ using \eqref{eq:defPhink} and \eqref{eq:hnk-prob} and then obtain the kernel $K^{(n)}_t=K^\TASEP_t(n,\cdot;n,\cdot)$ directly from \eqref{eq:Kt}.
We do this next.

\noindent As in the last example, we consider first the truncated initial data $X_0(i) = \ell(N-i)$, $i=1,\dotsc,2N$.
For fixed $0\leq k<n\leq2N$ and $z \leq X_0(1)$ we want to compute $h^n_k(z)=\pp_{B^*_{-1}=z}(\tau^*=k)$, where $\tau^*$ is the hitting time of the strict epigraph of $\big(X_0(n-m)\big)_{m=0,\dotsc,n-1}$.
Proceeding as above, for $\lambda < \log(2)$ and $\varphi^*(\lambda)=-\log(2e^{-\lambda}-1)$ we have $\ee_{B^*_{-1}=z}[e^{\lambda B^*_{\tau^*}-\tau^*\varphi^*(\lambda)}]=e^{\lambda z + \varphi^*(\lambda)}$, giving
\[\textstyle e^{\lambda z + \varphi^*(\lambda)} = \sum_{k \geq 0} \sum_{m \geq 1} \pp_{B^*_{-1}=z} \big(\tau^* = k, B^*_k = X_0(n - k) + m \big) e^{\lambda (X_0(n-k) + m)-k\varphi^*(\lambda)}.\]
Using the memoryless property of the geometric distribution we may rewrite the above probability as $\pp_{B^*_{-1}=z}\big(\tau^* = k, B^*_k = X_0(n - k) + 1 \big) 2^{1 - m}$.
The sum over $m$ is then just $\sum_{m \geq 1}(e^\lambda/2)^m = e^{\varphi^*(\lambda)}$, and thus the right hand side equals $2\ts\ee_{B^*_{-1}=z} \big[e^{\lambda B^*_{\tau^*} - \tau^* \varphi^*(\lambda)} \uno{B^*_{\tau^*} = X_0(n - \tau^*) + 1} \big] e^{\varphi^*(\lambda)-\lambda}$, which leads to $2\ts\ee_{B^*_{-1}=z} \big[e^{(\lambda\ell - \varphi^*(\lambda))\tau^*} \uno{B^*_{\tau^*} = X_0(n - \tau^*) + 1} \big] = e^{\lambda [z + \ell(n-N)]}$.
Setting $v = 1 - e^{\lambda}/2 > 0$ we get $2\ts\ee_{B^*_{-1}=z} \big[ (2^\ell (1-v)^{\ell-1} v)^{\tau^*} \uno{B^*_{\tau^*} = X_0(n - \tau^*) + 1} \big] = (2(1-v))^{z + \ell(n-N)}$.
The function $v \longmapsto p(v) \coloneqq 2^\ell (1-v)^{\ell-1} v$ is locally one-to-one near $0$, so
\begin{equation}
 \textstyle2\ts\pp_{B^*_{-1}=z} \big(\tau^* = k, B^*_{\tau^*} = X_0(n - \tau^*) + 1 \big) = \lim_{v \downarrow 0} \frac{1}{k!} \frac{\d^k}{\d p(v)^k} \left(2(1-v)\right)^{z + \ell(n-N)}.
\end{equation}
Using again the memorylessness of the walk, the left hand side equals $\pp_{B^*_{-1}=z}\big(\tau^* = k\big)$, while, by Cauchy's formula, the right hand side equals
\[\textstyle\inv{2\pi\I}\oint_{\Gamma_{0}} \d v\frac{1}{p(v)^{k+1}} p'(v) (2(1-v))^{z + \ell(n-N)}
=\frac{1}{2 \pi \I} \oint_{\Gamma_{0}} \d v\, \frac{(1-v)^{z + \ell(n-N) - 1}}{2^{\ell(N-n+k)-z} ((1-v)^{d-1} v)^{k}}\frac{(1 - \ell v)}{v},\]
where $\Gamma_0$ goes around $0$ but not $1$.
In principle this is only valid for $z\leq X_0(1)$, but the right hand side is analytic in $z$ so we actually get a formula for $h^n_k(0,z)$ for all $z\in\zz$.
Using \eqref{eq:kernelrt} and \eqref{eq:defPhink} we get
\begin{equation}
\textstyle\Phi^n_k(z) = \frac{1}{2 \pi \I} \oint_{\Gamma_{0}} \d v\, \frac{(1-v)^{z + \ell (n-N) - 1}}{2^{\ell (N-n+k)-z}((1-v)^{\ell -1} v)^{k}}\frac{1-\ell v}{v} e^{tv}.
\end{equation}
These functions extend trivially to 0 for $k<0$ so we may now perform the summation in \eqref{eq:Kt} over $k \geq 1$ (using the explicit formula \eqref{eq:defPsi} for $\Psi^n_k$) to get the kernel for truncated $\ell$-periodic initial data
\begin{equation}
\textstyle K^{(n)}_{t,N}(z_1,z_2) = \frac{1}{(2\pi\I)^2} \oint_{\Gamma_{0}} \d w \oint_{\Gamma_{0}'} \d v\, \frac{(1-v)^{z_2 - \ell (N-1) + n - 2} (1-w)^{n}}{2^{z_1 - z_2} v^n w^{z_1 -\ell N + n+1}} \frac{(1 - \ell v) e^{t(v + w - 1)}}{(1-w) w^{\ell -1} - (1-v)^{\ell -1} v},
\end{equation}
where the contours are so that $|(1-v)^{\ell -1}v|<|(1-w)w^{\ell -1}|$.

\noindent Finally, and as in the $2$-periodic case, we set $n=N+m$ in the last kernel, which gives
\begin{equation}
\textstyle\frac{1}{(2\pi\I)^2} \oint_{\Gamma_{-1}} \d w \oint_{\Gamma_0} \d v\, \frac{(1+v)^{z_2 - (\ell -1)N + \ell + m - 2} w^{N+m}}{2^{z_1 - z_2} v^{N+m} (1+w)^{z_1 -(\ell -1)N +m+1}} \frac{(1+\ell v) e^{t(w-v)}}{(1+w)^{\ell -1}w-(1+v)^{\ell -1}v}
\end{equation}
(where we have changed variables $v\longmapsto-v$, $w\longmapsto1+w$), and then take $N\to\infty$ to get the kernel for the full $\ell$-periodic initial condition.
For fixed $z_1$ and large enough $N$ the $w$ integral has no pole at $-1$.
Let $w_1(v),\dotsc w_{\ell-1}(v)$ be the $\ell-1$ solutions of $(1+w)^{\ell-1}w=(1+v)^{\ell-1}v$ other than $w=v$.
One can check that all these $\ell-1$ roots are distinct and lie inside the $w$ contour, while $w=v$ lies outside of it.
The full $\ell$-periodic kernel then evaluates to a sum over the residues of these $\ell-1$ simple poles, and after simplification (using the equation satisfied by the $w_\ell(v)$'s) we get
\begin{align}
\textstyle K^{\ell\uptext{-prd},(m)}_{t}(z_1,z_2) &= \textstyle\frac{1}{2\pi\I}\oint_{\Gamma_0} \d v\,\sum_{j=1}^{\ell-1}\frac{1+\ell v}{1+\ell w_j(v)}\frac{(1+v)^{z_2 + \ell + m - 2} w_j(v)^{m}}{2^{z_1 - z_2} v^{m} (1+w_j(v))^{z_1 + \ell + m - 1}}e^{t(w_j(v)-v)}\\
\textstyle&=\textstyle\frac{1}{(2\pi\I)^2} \oint_{\Gamma_{-1}} \d w \oint_{\Gamma_0} \d v\, \frac{(1+v)^{z_2 + \ell + m - 2} w^{m}}{2^{z_1 - z_2} v^{m} (1+w)^{z_1+m+1}} \frac{1+\ell v}{(1+w)^{\ell-1}w-(1+v)^{\ell-1}v}e^{t(w-v)}.
\end{align}
These formulas are very similar to \cite[Eqs. 2.3, 4.11]{bfp} (which are for discrete time TASEP).
In the case $\ell=2$ we have $w_1(v)=-1-v$ and we recover \eqref{eq:Kn2per} after a simple change of variables.
\end{ex}

\subsection{Integrability}\label{sec:integrability}

There are many notions of classical integrable systems:  Liouville integrability, algebraic integrability, etc.
Quantum integrability usually is used to mean that a quantum mechanical model possesses an infinite number of conserved quantities. 
Another notion of integrable system is simply that one has a representation under which the flow is linearized.
Theorem \ref{thm:tasepformulas} presents TASEP with right-finite initial data\footnote{One also has a formula for two-sided initial data, but because of the analytic extension it is cumbersome, and the proof is quite lengthy; moreover, it is not clear to us yet that the formula can be used for asymptotics. We leave it to a future paper.} as a new type of \emph{stochastic integrable system}, the dynamics being trivialized at the level of kernels, which 
satisfy the Lax equation\footnote{$K^\TASEP_t$ acts on the Hilbert space $\ell^2(\{n_1,\dotsc,n_m\}\times\zz$), and can be identified with an operator-valued $n\times n$ matrix acting on $\bigoplus_{n\in\{n_1,\dotsc,n_m\}}\ell^2(\zz)$. Under this identification, $\nabla^-$ is identified with the diagonal matrix with the specified entries along the whole diagonal.\label{foot:directsum}},
\begin{equation}\label{eq:comint2}
\partial_t K^\TASEP_t = \tfrac12[K^\TASEP_t, \nabla^-].
\end{equation}
The $m$-point distributions at time $t$ are obtained from $K^\TASEP_t $ by projecting down via the Fredholm determinant, and the full space time field is recovered from these transition probabilities using the Markov property.

TASEP has long been known to be solvable, by the coordinate Bethe ansatz, resulting in Sch\"utz's formula \eqref{eqGreen}.
One also has the algebraic Bethe ansatz in which the eigenfunctions are computable \cite{prolhac-spectrum}.
However, the resulting formulas do not directly integrate the dynamics\ts---\ts\tts{}i.e. solve the problem starting from generic initial data\ts---\ts\tts{}in a \emph{useful} way.
One might refer to them as \emph{exact solvability} versus the \emph{stochastic integrability} given in \eqref{eq:comint2}.

\section{1:2:3 scaling limit}\label{sec:123}

For each $\ep>0$ the 1:2:3 rescaled TASEP height function is 
\begin{equation}\label{eq:hep}
	\fh^{\ep}(\ft,\fx) = \ep^{1/2}\!\left[h_{2\ep^{-3/2}\ft}(2\ep^{-1}\fx) + \ep^{-3/2}\ft\right].
\end{equation}

\begin{rem}
The KPZ fixed point has \emph{one} free parameter\footnote{\label{jaragonfoot} It was recently proposed that the KPZ fixed point is given by $\partial_t h =  \lambda(\partial_x h)^2 -\nu(-\partial_x^2)^{3/2} h + \nu^{1/2}(-\partial_x^2)^{3/4}\xi$, $\nu>0$, the evidence being that formally it is invariant under the 1:2:3 KPZ scaling \eqref{123} and it preserves Brownian motion.  
Besides the non-physical non-locality, and the inherent difficulty of making sense of this equation,  one can see that it is not correct because it has \emph{two} free parameters  instead of one.  
Presumably, it converges to the KPZ fixed point in the limit $\nu\searrow 0$.
On the other hand, the model has critical scaling, so it is also plausible that if one introduces a cutoff (say, smooth the noise) and then take a limit, the result has $\nu=0$, and possibly even a renormalized $\lambda$.
So it is possible that, in a rather uninformative sense, the conjecture could still be true. 
}, corresponding to $\lambda$ in \eqref{KPZ}.
Our choice of the height function in TASEP moving downwards corresponds to $\lambda>0$\footnote{To get some intuition, check that $-\fx^2/\ft$ is a solution of 
$\partial_t\fh = \tfrac14(\partial_x\fh)^2$; it 
corresponds to step initial data.  The bulk downward movement of the TASEP height function has been compensated by the huge shift upward in \eqref{eq:hep}.}.
The scaling of space and time by the factor $2$ in \eqref{eq:hep} corresponds to the choice $|\lambda| =1/4$.

\noindent Note also that, for fixed $\ft$, the TASEP height function in \eqref{eq:hep} is being rescaled diffusively in space.
In particular, this fixes our study of the scaling limit to perturbations of density $1/2$.
We could perturb off any density $\rho\in (0,1)$ without extra difficulty by observing the system in an appropriate moving frame, but in order to avoid heavier notation we do not pursue it here.
\end{rem}

Assume that we have initial data  $X_0^\ep$ chosen to depend on $\ep$ in such a way that
\begin{equation}\label{xplim}
\fh_0=\lim_{\ep\to 0} \fh^{\ep}(0,\cdot)
\end{equation}
in distribution, in the $\UC$ topology described below.
We will also choose the frame of reference
\begin{equation}\label{eq:x0conv} 
  \xx_0^{-1}(-1)=1,
\end{equation}  
i.e. the particle labeled $1$ is initially the rightmost in $\zz_{<0}$.
Because the $\xx^\ep_0(k)$ are in reverse order, and because of \eqref{eq:x0conv} and the inversion \eqref{defofh}, \eqref{xplim} is equivalent to
\begin{equation}\label{x0limit}
\ep^{1/2}\left(\xx^\ep_0(\ep^{-1}\fx)+2\ep^{-1}\fx-1\right) \xrightarrow[\ep\to0]{} -\fh_0(-\fx)
\end{equation}
in distribution, in $\UC$, where the left hand side is interpreted as a linear interpolation to make it a continuous function of $\fx\in \rr$.

For fixed $\ft>0$, we will now show that the limit 
\begin{equation}\label{eq:height-cvgce}
\fh(\ft,\fx;\fh_0)=\lim_{\ep\to0}\fh^{\ep}(\ft,\fx)
\end{equation}
also exists in distribution, in $\UC$.
In Sec. \ref{sec:tightandmarkov} we will prove the
Markov property, which gives us, in principle, the multi-time and space distributions of the entire field.
We take \eqref{eq:height-cvgce} essentially as our \emph{definition} of the KPZ fixed point $\fh(\ft,\fx; \fh_0)$.  We will often omit $\fh_0$ from the notation when it is clear from the context.

\subsection{State space and topology}\label{UC}

The state space in which we will always work, and where \eqref{xplim}, \eqref{x0limit} will be assumed to hold and \eqref{eq:height-cvgce} will be proved, in distribution, will be\footnote{The bound $\fh(\fx)\le \gga + \g |\fx|$ is not as general as possible.  With work, one can extend to the class $\fh(\fx)\le \gga+\g_0|\fx|^2$ up to time $\ft=\g_0^{-1}$.  At that time initial data such as $\fh_0(\fx)= \gga+\g_0|\fx|^2 $ will actually have an explosion.\label{foot:linearbd}} 
\begin{multline}
\UC=\text{upper semicontinuous fns. $\fh\!:\rr \to [-\infty,\infty)$ with $\fh(\fx)\le \gga + \g |\fx|$  for some $\gga,\g<\infty$}\\
\text{and $\fh(\fx) >-\infty$ for some $\fx$}
\end{multline}
with the topology of local $\UC$ convergence, which we now describe.

Recall $\fh$ is upper semicontinuous ($\UC$) if and only if its \emph{hypograph} $\hypo(\fh) = \{(\fx,\fy): \fy\le \fh(\fx)\}$ is closed in $[-\infty,\infty)\times \rr$. 
We endow $[-\infty,\infty)$ with the distance\footnote{This allows continuity at time $0$ for initial data which takes values $-\infty$, such as half-flat (see Sec.\,\ref{sec:airyprocess}).} $d_{[-\infty,\infty)}(\fy_1,\fy_2) = |e^{\fy_1} - e^{\fy_2}|$.
Given $\fh_1,\fh_2\in\UC$ and $M>0$ we say that the hypographs $\mathfrak{H}_1^M$ and $\mathfrak{H}_2^M$ of $\fh_1$ and $\fh_2$ restricted to $[-M,M]$ are \emph{$\delta$-close} if
$
B_{\delta}(\mathfrak{H}^M_1)\subseteq\mathfrak{H}_2^M$ and $B_{\delta}(\mathfrak{H}_2^M)\subseteq\mathfrak{H}_1^M$, where $B_\delta (\mathfrak{H}) := \cup_{(\ft,\fx)\in\mathfrak{H}}B_{\delta}((\ft,\fx))$, 
$B_\delta((\ft,\fx))$ being the ball of radius $\delta$ around $(\ft,\fx)$, i.e. we use the \emph{Hausdorff distance} on the restricted hypographs.
We say then that $\big(\fh_\ep\big)_\ep\subseteq\UC$ \emph{converges locally in $\UC$} to $\fh\in\UC$ if there is a $\g>0$ such that $\fh_\ep(\fx) \le \gga + \g |\fx|$ for all $\ep>0$ and for every $M\geq1$ there is a $\delta=\delta(\ep,M)>0$ going to 0 as $\ep\to0$ such that the hypographs $\mathfrak{H}_{\ep}^M$ and $\mathfrak{H}^{M}$ of $\fh_\ep$ and $\fh$ restricted to $[-M,M]$ are $\delta$-close.

Another characterization is that
$\fh^n\to \fh$ locally in $\UC$ if and only if for each $\fx$, $\limsup_{\fx^n\to\fx} \fh^n(\fx^n) \le \fh(\fx)$ and $\exists\,\fx^n\to\fx$ with $\liminf \fh^n(\fx^n)\ge  \fh(\fx)$. 

We will also use the space $\LC=\big\{\fg\!: -\fg\in\UC\!\big\}$ (made of lower semicontinuous functions), the topology now being defined in terms of \emph{epigraphs}, $\epi(\fg) = \{(\fx,\fy): \fy\ge \fg(\fx)\}$.

The Borel sets of $\UC$ will be denoted $\mathcal{B}(\UC)$.
It is fairly easy to see that  $\UC$ is a Polish space and the subspace $\mathcal{B}_0(\UC)\subseteq\mathcal{B}(\UC)$  of sets $A$ of the form $A=\{\fh\in \UC, \fh(\fx_i)\le \fa_i, i=1,\ldots,n\}$ form a generating family for the $\sigma$-algebra $\mathcal{B}(\UC)$, as does the subspace $\mathcal{B}_1(\UC)\subseteq\mathcal{B}(\UC)$  of sets of the form $A_{\mathfrak{g}} = \{ \fh\in \UC:  \fh(\fx) \le \fg(\fx),\,\fx\in\rr\}$, $\fg\in\LC$.

The  $\UC$ topology is very natural for interface growth, incorporating the inherent lateral growth mechanism and the $\fh\to-\fh$ asymmetry.

One could alternatively take ${\mathscr{C}}_{\g}= \{$continuous functions in $\UC$ satisfying $\fh(\fx)\le \gga + \g |\fx|\}$ and use $\mathscr{C}=\cup_{\g>0}\mathscr{C}_{\g}$ as state space; the topology on $\UC$, when restricted to $\mathscr C$, is the topology of uniform convergence on compact sets.
Or we could consider the local H\"older spaces defined by the family of semi-norms
\begin{equation}\label{eq:defHolderNorm}
\| \fh \|_{\beta, [-M,M]} = \sup_{\fx_1\neq \fx_2 \in [-M,M]} 
{ |\fh(\fx_2)-\fh(\fx_1)|}/{|\fx_2-\fx_1|^\beta},
\end{equation}
${\mathscr{C}}^\beta_{\g}=\{ \fh\in {\mathscr{C}}_{\g}$ with $\| \fh \|_{\beta, [-M,M]} <\infty$ for each $M=1,2,\ldots\}$.
These would suffice for any $\ft>0$, but many natural initial data are not in these spaces.  
For example the $\UC$ function $\mathfrak{d}_\fx(\fx) = 0$, $\mathfrak{d}_\fx(\fy) = -\infty$ for $\fy\neq \fx$, known as a {\em narrow wedge at $\fx$}, plays a role in the theory somewhat analogous to Dirac's delta function.

For our purposes, the following fact about $\UC$ is crucial.  Recall $\ftau_\ep\longrightarrow \ftau$ in distribution if and only if $\limsup \ftau_\ep \le \ftau$ and $\liminf \ftau_\ep \ge \ftau$ in distribution, i.e. $\limsup \pp( \ftau_\ep\ge r) \le \pp(\ftau\ge r)$ and $\liminf \pp( \ftau_\ep> r)
\ge \pp(\ftau> r)$. 
The probability spaces on which they are defined need have nothing to do with each other, but it can be conceptually easier
to construct them all on the same probability space, in which case the definitions are just the standard ones.

\begin{prop}\label{keyconvoftau} 
Suppose $\fg_\ep\longrightarrow \fg$ locally in $\LC$. Let $\fB (\fx) $, $\fx\ge 0$ be a Brownian motion starting at $z<\fg(0)$, and let $\fB_\ep(\fx)$ be stochastic processes with $\fB_\ep\longrightarrow \fB$ in $\UC$, in distribution.  Let $\ftau^\ep=\inf\{ \fx\ge 0: \fB_\ep (\fx) \ge \fg_\ep(\fx)\}$ and $\ftau=\inf\{ \fx\ge 0:
\fB (\fx) \ge \fg(\fx)\}$ be the first hitting times of $\epi(\fg_\ep)$ and $\epi(\fg)$.  Then $\ftau^\ep\longrightarrow \ftau$ in distribution. Furthermore, the convergence is uniform over $\fg$ in sets of bounded H\"older $\beta$-norm, $\beta\in (0,1/2)$.
\end{prop}

\begin{proof}
If there exists a subsequence $\ftau^{\ep_n}\longrightarrow \fx$ then $0\le \limsup \fB_{\ep_n}(\ftau^{\ep_n}) - \fg_{\ep_n}(\ftau^{\ep_n}) \le \fB(\fx) -\fg(\fx)$ because they are converging in $\UC$, so $\fx\ge \ftau$, and thus $\liminf \ftau^\ep \ge \ftau$.
For the other direction, let
$f_\delta(0)=0$, $f_\delta(\fx) = \delta$ for $\fx\ge \delta$ and linear in between.  Let $\bar\ftau_\delta=\inf\{ \fx\ge 0:
\fB (\fx) \ge \fg(\fx)+f_\delta(\fx)\}$.   By the Cameron-Martin formula $\bar\ftau_\delta\searrow \ftau$ in distribution: $\pp( \bar\ftau_\delta\in I) = \ee[\exp\{ B(\delta)-B(0) -\delta/2\} \uno{ \ftau\in I}]\longrightarrow \pp(\ftau\in I)$ since the integrands are uniformly integrable.
Since $\fB_\ep-\fg_\ep\longrightarrow \fB-\fg$ in $\UC$, there are $\fx_\ep\longrightarrow \bar\ftau_\delta$ with $ \liminf \fB_\ep(\fx_\ep) -\fg_\ep(\fx_\ep) >0$.  So   $\ftau^\ep <\bar\ftau_\delta+\delta$ for $\ep$ sufficiently small.  This proves $\limsup \ftau^\ep
\le \ftau$ in distribution. 

To prove the uniformity, for any $\delta>0$, and restricting to $[0,M]$, we have $\hypo(\fB_\ep-\fg_\ep) \subset B_\delta(\hypo( \fB-\fg))$ for sufficiently small $\ep>0$.  There is a $\gamma>0$ depending only on the H\"older $\beta$-norm of $ \fB-\fg $ 
and going to $0$ with $\delta$ such that $ B_\delta(\hypo( \fB-\fg)) \subset \hypo(\fB-\fg + \gamma)$. Hence $\ftau^\ep\ge \ftau_{z+\gamma}$ in distribution, where the subscript $z+\gamma$ indicates that the hitting time $\ftau_{z+\gamma}$ is for the Brownian  starting with $\fB(0)=z+\gamma$. 
In the other direction, for any $\delta>0$ there exists $\gamma>0$ depending only on the H\"older $\beta$-norm of  $ \fB-\fg $ such that $\fB(\fx) - \fg(\fx) >\delta$ for any $|\fx-\ftau_{z-\gamma}|<\delta$, and there exist $|\fx_\delta -\ftau_{z-\gamma}|<\delta$ for which $(\fB_\ep-\fg_\ep)(\fx_\delta) \ge  (\fB-\fg)(\ftau_{z-\gamma})-\delta$.  Hence
$\ftau^\ep\le \fx_\delta <\ftau_{z-\gamma} +\delta$. 
Now let $(\overline\fB,\underline\fB)$ be a pair of coalescing Brownian motions starting at $(z+\gamma,z-\gamma)$ defined by letting $\overline\fB(\fy)=2z-\underline\fB(\fy)$ until the first time $\fsigma_z$ they meet, and $\overline\fB(\fy)=\underline\fB(\fy)$ for $\fy>\fsigma_z$.
We have $\pp(\ftau_{z+\gamma}\le T)-\pp(\ftau_{z-\gamma}\le T) = \pp( \overline\fB~\text{hit~but ~} \underline\fB~\text{didn't}) \le \pp(\fsigma_z>\ftau_{z+\gamma} )$.  Recall $z<\fg(\fx)$.  We also have $\fg(\fx+\fy) \ge \fg(\fx) -C\tts\fy^\beta$ for $0\le\fy\le T$ by the uniform H\"older bound, so if we let $\bm{\nu}_{z+\gamma}$ be the hitting time of $\fg(\fx) - C\tts\fy^\beta$ by $\overline\fB$, we get  $\ftau_{z+\gamma}\ge \bm{\nu}_{z+\gamma}$.
Hence $\pp(\ftau_{z+\gamma}\le T)-\pp(\ftau_{z-\gamma}\le T)\le\pp(\fsigma_z>\bm{\nu}_{z+\gamma} )\longrightarrow0$, as $\gamma\to 0$  with a rate depending only on $C,\gamma$ and $\beta$.
\end{proof}

\subsection{Approximation setup}\label{sec:finitespeed}

For any $\fh_0\in \UC$, we can find initial data $X^\ep_0$  so that \eqref{x0limit} holds in the $\LC$
topology.  
This is easy to see, because any $\fh_0\in \UC$ is the limit of functions which are finite at finitely many points, and  $-\infty$ otherwise.
In turn, such functions can be approximated by initial data $X^\ep_0$ where the particles are densely packed in  blocks.
Note there is a mild abuse here as the left hand side of \eqref{x0limit} is a function on $\ep\zz$.  We can always extend it to $\rr$ in a simple way,
say taking it piecewise constant on $(\ep n, \ep( n+1))$, and choosing the endpoints so that it is lower semi-continuous.  Similarly,
\eqref{eq:hep} will be taken to be a piecewise constant $\UC$ function.

Our goal is to take such a sequence of initial data $\xx^\ep_0$ and compute $\pp_{\fh_0} \!\left(\fh(\ft,\fx_i)\leq \fa_i,\;i=1,\dotsc,m\right)$ which, from \eqref{defofh}, \eqref{eq:hep}, \eqref{eq:x0conv} and \eqref{eq:height-cvgce}, is the limit as $\ep\to 0$ of
\begin{equation}\label{eq:TASEPtofp}
	\pp_{X_0}\!\left(X_{2\ep^{-3/2}\ft}(\tfrac12\ep^{-3/2}\ft-\ep^{-1}\fx_i-\tfrac12\ep^{-1/2}\fa_i+1)>2\ep^{-1}\fx_i-2,\;i=1,\dotsc,m\right).
\end{equation}
We therefore want to consider Thm.\,\ref{thm:tasepformulas} with 
\begin{equation}\label{eq:KPZscaling}
t=2\ep^{-3/2}\ft,\qquad  n_i = \tfrac{1}{2}\ep^{-3/2}\ft-\ep^{-1}\fx_i-\tfrac12\ep^{-1/2}\fa_i+1,\qquad a_i=2\ep^{-1}\fx_i-2
\end{equation}
where we will always assume that $\ep$ is small enough so that $n_i>0$ for each $i$.

The formula \eqref{eq:Kt-2} for the TASEP kernel requires initial data which is right-finite.
While one can build a formula which holds without this restriction, it is not nice for passing to limits.
But there is no loss of generality in considering right-finite data because of the next lemma, which says that we can safely cut off our data far to the right.  It also
tells us how fast information is transmitted in the fixed point (see Thm.\,\ref{thm:fps}). 

\begin{defn}\label{def:cutoff}{\bf (Cutoff data)}
\enspace For each integer $L$, the cutoff data is $X_0^{\ep,L}(n) = X^\ep_0(n)$ if 
$n  > -\lfloor\ep^{-1}L'\rfloor$  and $X_0^{\ep,L}(n) = \infty$ if $n\le -\lfloor\ep^{-1}L'\rfloor$, where $L'\sim L/2$ is chosen so that $\ep X^\ep_0(-\lfloor\ep^{-1}L'\rfloor)=L$.
This corresponds to replacing $\fh^{\ep}_0(\fx)$ by $\fh^{\ep,L}_0(\fx)$ with a straight line with slope $-2\ep^{-1/2}$ to the right of $L$.
This is the \emph{$\UC$ cutoff at $L$}.
The \emph{$\LC$ cutoff of $\fg$ at $L$} is just minus the $\UC$ cutoff of $-\fg$.
\end{defn}

The following will be proved in Appx.\,\ref{app:cutoff}:
  
\begin{lem}{\bf (Finite propagation speed)}\label{cutofflemma}
\enspace Suppose that $X^\ep_0$ satisfies \eqref{x0limit} with $\fh_0\in\UC$.
There are $\ep_0>0$ and $C<\infty$, $\delta>0$ independent of $\ep\in(0,\ep_0)$ such that the difference of \eqref{eq:TASEPtofp} computed with initial data $X^\ep_0$ and with initial data $X_0^{\ep,L}$ is bounded by $ C e^{-(\frac23 - \delta) L^3}$.
\end{lem}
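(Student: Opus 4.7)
My approach is to compare the two TASEPs through a basic graphical coupling and then reduce the resulting discrepancy to a one-point tail estimate for step-initial TASEP.

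First I would couple the two processes via the graphical construction (shared Poisson clocks at each bond). By monotonicity of TASEP --- removing the extras present in $X_0^\ep$ (labels $n\le-\lfloor\ep^{-1}L\rfloor$, sitting to the right of $\approx 2L\ep^{-1}$) can only speed up the remaining particles --- the coupling yields $\xx_t^\ep(n)\le\xx_t^{\ep,L}(n)$ for every label $n>-\lfloor\ep^{-1}L\rfloor$ and every $t\ge0$. The multi-point event in \eqref{eq:TASEPtofp} for $X_0^\ep$ is therefore contained in the same event for $X_0^{\ep,L}$, and the difference between the two probabilities in \eqref{eq:TASEPtofp} is bounded by
\[\sum_{j=1}^{M}\pp\!\left(\xx_{\ep^{-3/2}\ft}^\ep(n_j)<2\ep^{-1}\fx_j-1\le\xx_{\ep^{-3/2}\ft}^{\ep,L}(n_j)\right).\]

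I would then reduce each summand to a one-point tail event for a step-initial TASEP. By \eqref{x0limit} and the bound $\fh_0(\fx)\le C(1+|\fx|)$, the cut-off extras initially occupy positions $\ge 2L\ep^{-1}-C'\ep^{-1/2}$ at asymptotic density $\tfrac12$. A further monotone coupling replaces them by a fully-packed step configuration with rightmost particle at $\lfloor 2L\ep^{-1}-C'\ep^{-1/2}\rfloor$; the cascade of blocking from the extras back to the central label $n_j$ is then controlled by the event that some specific label of this auxiliary step-TASEP --- of order $\ep^{-1/2}$ in magnitude, determined by the local density of $X_0^\ep$ on $[0,2L\ep^{-1}]$ --- has travelled leftward, by time $\ep^{-3/2}\ft$, into a column near $2\ep^{-1}\fx_j$.

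In the KPZ window $L\le c\ep^{-1/2}$ this is a deep lower-tail event at scale $\sim L$ (in KPZ-scaled coordinates) below the deterministic trajectory. The cubic lower tail $F_2(-s)\le Ce^{-cs^3}$ of Tracy--Widom GUE, together with the corresponding uniform non-asymptotic bounds on the step-TASEP one-point distribution obtainable from a steepest-descent analysis of the Fredholm determinant kernel (in the spirit of \cite{bfp,johansson}), yields the $e^{-cL^3}$ estimate. For $L>c\ep^{-1/2}$ one leaves the KPZ scaling window, and here a Chebyshev-type argument applied to the current of the step TASEP across the relevant column --- whose variance is controlled via a Burke-type stationarity argument --- produces the cruder $L^{-1/2}$ bound.

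The main obstacle will be extracting the cubic tail $e^{-cL^3}$ uniformly in $\ep\in(0,\ep_0)$ rather than merely in the scaling limit, which requires careful non-asymptotic steepest-descent control of the step-TASEP kernel throughout the moderate deviation regime, and then matching the two regimes cleanly at the crossover $L\sim c\ep^{-1/2}$ with consistent constants.
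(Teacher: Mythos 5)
Your proposal takes a genuinely different route from the paper. The paper never couples TASEPs at all: it works entirely with the explicit Fredholm determinant kernels. It first proves a decomposition of $G_{0,n}$ (Lemma \ref{lem:splitG0n}) that splits the random-walk ``hit the curve'' expectation at a fixed step $a$, then compares the kernels obtained from $X_0^{\ep,L+k\ep}$ and $X_0^{\ep,L+(k+1)\ep}$ (i.e.\ adding one particle at a time), controls the resulting kernel difference in Hilbert--Schmidt norm using the decay bounds of Lemma \ref{lem:bd2}, converts that to a trace-norm bound, invokes \eqref{eq:detBd}, and finally sums over $k$. The cubic tail $e^{-cL^3}$ is inherited directly from the $e^{-c\bar u^{3/2}}$ decay of the rescaled kernels in Lemma \ref{lem:bd2} together with a sub-Gaussian bound on the random-walk hitting probability $\pp_v(\tau_1=\lfloor\ep^{-1}L\rfloor+k)$. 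Your route, by contrast, goes through monotone couplings of the interacting particle system and asks for tail estimates on the resulting discrepancy. The coupling step itself is sound: removing the particles with labels $\le-\lfloor\ep^{-1}L\rfloor$ (which sit far to the right) indeed gives $X_t^\ep(n)\le X_t^{\ep,L}(n)$ for all remaining labels, so the difference of the two probabilities is controlled by the union bound you write.

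However, there is a genuine gap in the second half. Your reduction of the discrepancy event to ``the event that some specific label of this auxiliary step-TASEP---of order $\ep^{-1/2}$ in magnitude---has travelled leftward'' does not parse: in TASEP particles only jump to the right (holes move left), and the label count between position $0$ and $2L\ep^{-1}$ at density $\approx 1/2$ is of order $L\ep^{-1}$, not $\ep^{-1/2}$. What actually needs to be controlled is the leftward propagation of the \emph{discrepancy set} (equivalently, a cloud of second-class particles seeded at the extras around position $\approx 2L\ep^{-1}$), and the probability that this front reaches label $n_j$ by time $\ep^{-3/2}\ft$. That is not a one-point step-TASEP event, and turning it into one requires a nontrivial argument (the relevant object is a shock/second-class-particle front with superdiffusive, non-Gaussian fluctuations). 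Moreover, the cubic tail uniformly in $\ep$---which you flag as the ``main obstacle''---is essentially all of the analytic content of the lemma, and your plan defers it to exactly the kind of non-asymptotic steepest-descent kernel analysis that the paper carries out directly (Lemma \ref{lem:bd2}); at that point the coupling reduction has not actually saved any work. So while the coupling strategy is a legitimate alternative framework, as written the proof is missing both the correct reduction and the heart of the estimate.
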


\subsection{Limiting operators}\label{sec:one-sided}
The limits are stated in terms of an (almost)  group of operators
\begin{equation}\label{eq:groupS}
\fT_{\ft,\fx}=\exp\{ \fx\partial^2 + \tfrac{\ft}3\tts\partial^3 \}, \qquad \fx,\ft\in\rr^2\setminus \{\fx<0, \ft= 0\}, 
\end{equation}
satisfying $\fT_{\fs,\fx}\fT_{\ft,\fy}=\fT_{\fs+\ft,\fx+\fy}$ as long as all subscripts avoid $\{\fx<0, \ft= 0\}$.  We can think of them as unbounded operators with
domain $\mathscr{C}_0^\infty(\rr)$.  
It is somewhat surprising that they even make sense for
$\fx<0$, $\ft\neq 0$, but it is just an elementary consequence of the following explicit kernel and basic properties of the Airy function\footnote{\label{footairy}${\langle}~~$ is the positively oriented contour going from $e^{-\I\pi/3}\infty$ to $e^{\I\pi/3}\infty$ through $0$.} $\Ai(z)= \frac1{2\pi\I} \int_{\langle}\d w\ts e^{\frac{1}3 w^3-z w}$.  
The $\fT_{\ft,\fx}$ act by convolution
$\fT_{\ft,\fx} f(z) = \int_{-\infty}^\infty\d y\,\fT_{\ft,\fx}(z,y) f(y) = \int_{-\infty}^\infty\d y\, \fT_{\ft,\fx}(z-y) f(y)$
where, for $\ft>0$, 
\begin{equation}\label{eq:fTdef}
\fT_{\ft,\fx}(z)=\frac1{2\pi\I} \int_{\langle}\ts\d w\,e^{\frac{\ft}3 w^3+\fx  w^2+z w} = \ft^{-1/3} e^{\frac{2 \fx^3}{3\ft^2}-\frac{z\fx}{\ft} }\tsm\Ai(-\ft^{-1/3} z+\ft^{-4/3}\fx^2),
\end{equation}
and $\fT_{-\ft,\fx}=(\fT_{\ft,\fx})^*$, or $\fT_{-\ft,\fx}(z,y)=\fT_{-\ft,\fx}(z-y)=\fT_{\ft,\fx}(y-z)$.
From this we get directly the identity $(\fT_{\ft,\fx})^*\fT_{\ft,-\fx}=\fI$, which we will use often without reference.

In addition to $\fT_{\ft,\fx}$ we need to introduce the limiting version of ${\SN}^{\epi(X_0)}_{t,n}$.
It will actually be more convenient for us to introduce the hypograph variant of this operator first, since it is the one that will show up more often in our formulas: For $\fh\in\UC$ we define 
\begin{equation}\label{eq:defShypo}
\fT^{\hypo(\fh)}_{\ft,\fx}(v,u)=\ee_{\fB(0)=v}\big[\fT_{\ft,\fx-\ftau}(\fB(\ftau),u)\uno{\ftau<\infty}\big]
\end{equation}
where $\fB(x)$ is a Brownian motion with diffusion coefficient $2$ and $\ftau$ is the hitting time of the hypograph of $\fh$\footnote{It is important that we use $\fB(\ftau)$ in \eqref{eq:defShypo} and not $\fh(\ftau)$ which, for discontinuous initial data, could be strictly larger.}\footnote{$\fT_{\ft,\fx-\fy}(\fB(\fy),u)$ is a martingale in $\fy\ge 0$. However, it is not uniformly integrable and one \emph{cannot} apply the optional stopping theorem to conclude that $\ee_{\fB(0)=v}\big[\fT_{\ft,\fx-\ftau}(\fB(\ftau),u)\uno{\ftau<\infty}\big]= \fT_{\ft,\fx}(v,u)$. For example, if $\fh\equiv0$, one gets instead $\ee_{\fB(0)=v}\big[\fT_{\ft,\fx-\ftau}(\fB(\ftau),u)\uno{\ftau<\infty}\big]= \fT_{\ft,\fx}(-v,u)$ for $v>0$ (see \cite[Prop. 3.6]{flat}).}.
Note that, trivially, $\fT^{\hypo(\fh)}_{\ft,\fx}(v,u)=\fT_{\ft,\fx}(v,u)$ for $ v\leq\fh(0)$. 
The fact that the expectation in \eqref{eq:defShypo} is finite will be proved in Appx.\,\ref{app:hs}.
$\fT^{\hypo(\fh)}_{\ft,\fx}$ really depends only on the values of $\fh$ on $[0,\infty)$, and we will sometimes evaluate it at functions defined only there. 

One way to think of $\fT^{\hypo(\fh)}_{\ft,\fx}(v,u)$ is as a sort of asymptotic transformed transition density for the Brownian motion $\fB$ to go from $v$ to $u$ hitting the hypograph of $\fh$.
To see what we mean, write
\begin{equation}\label{eq:asymptTransTransProb}
\fT^{\hypo(\fh)}_{\ft,\fx}=\lim_{\mathbf{T}\to\infty}\fT^{\hypo(\fh)}_{[0,\mathbf{T}]}\fT_{\ft,\fx-\mathbf{T}}
\quad\text{with}\quad\fT^{\hypo(\fh)}_{[0,\mathbf{T}]}(v,u)=\ee_{\fB(0)=v}\big[\fT_{0,\mathbf{T}-\ftau}(\fB(\ftau),u)\uno{\ftau\leq\mathbf{T}}\big]
\end{equation}
and note that $\fT^{\hypo(\fh)}_{[0,\mathbf{T}]}(v,u)$ is nothing but the transition density for $\fB$ to go from $v$ at time 0 to $u$ at time $\mathbf{T}$ hitting $\hypo(\fh)$ in $[0,\mathbf{T}]$. 

The epi version of the operator is defined similarly: For $\fg\in\LC$,
\begin{equation}\label{eq:defSepi}
\fT^{\epi(\fg)}_{\ft,\fx}(v,u)=\ee_{\fB(0)=v}\big[\fT_{\ft,\fx-\ftau}(\fB(\ftau),u)\uno{\ftau<\infty}\big]
\end{equation}
where $\ftau$ is now defined as the hitting time of the epigraph of $\fg$ (the meaning of $\ftau$ will always be clear from the context); now we have $\fT^{\epi(\fh)}_{\ft,\fx}(v,u)=\fT_{\ft,\fx}(v,u)$ for $v\geq\fg(0)$.
As a consequence of \eqref{eq:asymptTransTransProb} one can see that the epi and hypo operators are related through
\begin{equation}\label{eq:Sepi-flipped}
\fT^{\epi(\fg)}_{-\ft,\fx}(v,u)=\fT^{\hypo(-\fg)}_{\ft,\fx}(-v,-u).
\end{equation}

\begin{lem}\label{lem:KernelLimit1}
Under the scaling \eqref{eq:KPZscaling} (dropping the $i$ subscripts) and assuming that \eqref{x0limit} holds in $\LC$, if we set $z=2\ep^{-1}\fx+\ep^{-1/2}(u+\fa)-2$ and $y'=\ep^{-1/2}v$, then we have for $\ft>0$ as $\eps\searrow 0$,
\begin{align}\label{eq:QRcvgce}
\fT^\ep_{-\ft,\fx}(v,u)&\coloneqq\ep^{-1/2}\SM_{-t,-n}(y',z)\longrightarrow{} \fT_{-\ft,\fx}(v,u),\\\label{eq:QRcvgce2}
\bar\fT^\ep_{-\ft,-\fx}(v,u)&\coloneqq\ep^{-1/2}\SN_{-t,n}(y',z)\longrightarrow {} \fT_{-\ft,-\fx}(v,u),\\\label{eq:QRcvgce3}
\bar\fT^{\ep,\epi(-(\fh_0^\ep)^-)}_{-\ft,-\fx}(v,u)&\coloneqq\ep^{-1/2} {\SN}^{\oepi(X_0)}_{-t,n}(y',z)\longrightarrow {} \fT^{\epi(-\fh_0^-)}_{-\ft,-\fx}(v,u)
\end{align} 
pointwise, where $\fh^-(x)=\fh(-x)$ for $x\geq0$.  Here $\SM_{-t,-n}$, $\SN_{-t,n}$ are defined in \eqref{def:sm} and \eqref{def:sn}.
\end{lem}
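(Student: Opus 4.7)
The plan is to treat \eqref{eq:QRcvgce} and \eqref{eq:QRcvgce2} by steepest-descent analysis of the contour representations \eqref{def:sm}, \eqref{def:sn}, and \eqref{eq:QRcvgce3} by an invariance principle applied to the hitting-time representation \eqref{eq:sepi23}.

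For the first two statements, I would substitute $w = \tfrac{1}{2} + \ep^{1/2}\tilde{w}$. Under the scaling \eqref{eq:KPZscaling}, $w = 1/2$ is an inflection-type critical point produced by the vanishing of $\Phi''$ at leading order, which sets the natural KPZ scale. After the substitution, the $2^{z_1-z_2}$ factor cancels against the constant $\log 2$ pieces coming from $\log w$ and $\log(1-w)$, leaving an $\ep$-independent prefactor. Taylor-expanding the remaining exponent, with the $O(\ep^{-3/2})$, $O(\ep^{-1})$ and $O(\ep^{-1/2})$ contributions coming from $t$, $n_i$, and $z_i - y'$ respectively, produces an $O(1)$ limiting cubic in $\tilde{w}$ whose coefficient of $\tilde{w}^3$ is $-\tfrac{4}{3}\ft$, while the $\tilde{w}^2$ and $\tilde{w}$ coefficients reproduce $\pm 4\fx$ and $2(v - u)$ (sign depending on whether we treat $\SM$ or $\SN$). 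A final change of variable $\tilde{w} = -\hat{w}/2$ converts this to the exponent $\tfrac{\ft}{6}\hat{w}^3 + \fx\hat{w}^2 - (v - u)\hat{w}$ defining $\fT_{\ft,\fx}$ in \eqref{eq:fTdef}, with the $\ep^{1/2}$ Jacobian from $dw = \ep^{1/2}d\tilde{w}$ matching the external $\ep^{-1/2}$ prefactor. The original loop $\Gamma_0$ around $w = 0$ becomes, in $\hat{w}$-coordinates, a loop around a pole receding to infinity, which can be deformed through the origin into the Airy contour $\langle$ along steepest-descent directions where the cubic decays.

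For \eqref{eq:QRcvgce3} I would combine the saddle computation above with an invariance principle. The walk $B_m$ has Geom$[\tfrac{1}{2}]$ left jumps of mean $-2$ and variance $2$; Donsker's theorem applied to $\fB^\ep(\fs) := \ep^{1/2}(B_{\lfloor\ep^{-1}\fs\rfloor} + 2\lfloor\ep^{-1}\fs\rfloor)$ with $B_0 = \ep^{-1/2}v$ gives $\fB^\ep \Rightarrow \fB$, a Brownian motion with diffusion coefficient $2$ starting at $v$. Under \eqref{x0limit} the obstacle curve $(X_0(k+1) + 2(k+1))_{k \geq 0}$ rescaled analogously converges in $\LC$ to $-\fh_0^-$, and continuity of the epigraph-hitting functional then yields $\ep\tau \to \ftau$ and $\ep^{1/2}(B_\tau + 2\tau) \to \fB(\ftau)$ jointly. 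Redoing the saddle computation with $n_j \leadsto n_j - \tau$ and $y' \leadsto B_\tau$ is immediate: the substitutions promote $\fx_j$ to $\fx_j + \ftau$ and $v$ to $\fB(\ftau)$ in the limiting exponent, yielding $\fT_{\ft, -\fx_j - \ftau}(\fB(\ftau), u_j)\uno{\ftau < \infty}$ pointwise. Taking expectation against the law of $(\ftau, \fB(\ftau))$ gives \eqref{eq:QRcvgce3}, provided one can interchange limit and $\ee$.

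The main obstacle is precisely this interchange, together with the analogous uniform control in the saddle analysis. The random integrand $\SN_{t, n_j - \tau}(B_\tau, z_j)$ is a contour integral whose Gaussian-type envelope depends on $\tau$; one must choose a steepest-descent contour uniform in $\tau$ over compact ranges and use cubic-growth decay of the exponent at large $|\tilde{w}|$, combined with hitting-time tail estimates for the walk meeting the obstacle curve, to exhibit a $\pp$-integrable dominator. A parallel argument controls the large-$|\tilde{w}|$ tails of the $w$-integral in \eqref{eq:QRcvgce} and \eqref{eq:QRcvgce2}. These domination steps, paralleling the trace-class bounds deferred to Appendix~\ref{hs}, are the technical content beyond the formal saddle and invariance-principle computations sketched above.
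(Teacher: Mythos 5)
Your argument is essentially identical to the paper's own sketch: steepest descent at the double critical point $w=1/2$ (your two-step substitution $w=\tfrac12+\ep^{1/2}\tilde w$, $\tilde w=-\hat w/2$ is the same as the paper's single change $w\mapsto\tfrac12(1-\ep^{1/2}\tilde w)$) for \eqref{eq:QRcvgce}--\eqref{eq:QRcvgce2}, and Donsker's invariance principle plus convergence of the hitting time to the rescaled obstacle curve for \eqref{eq:QRcvgce3}. You also correctly flag that the dominated-convergence / uniformity step is the real technical content, which the paper likewise defers to the trace-class estimates of Appendix~\ref{hs} (Lemmas~\ref{lem:hitting}, \ref{lem:G2}, \ref{lem:HSepi}).
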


Note that the kernels on the left hand side also depend on $\fa$, but we will not write the dependence explicitly.

The pointwise convergence does not actually suffice for our purposes; it will be suitably upgraded in Appx. \ref{app:hs-estimates}.
The asymptotics in Lem. \ref{lem:KernelLimit1} is elementary and not really a steepest descent. 
Where steepest descent is needed is in Appx. \ref{app:hs-estimates}, to study the asymptotics in $\fx, v, u_i$ of the approximating functions on the left hand side of \eqref{eq:QRcvgce}, \eqref{eq:QRcvgce2}, \eqref{eq:QRcvgce3}, in order to bound the kernels in trace norm (see Sec.\,\ref{app:hs-estimates}).

\begin{proof}
First we give a heuristic proof using operators, which helps one understand where the third derivative comes from.
Since $Q^{-1}=I+2\nabla^+$ with $\nabla^+f(x)=f(x+1)-f(x)$, and dropping lower order terms, the left hand side of \eqref{eq:QRcvgce} is $e^{-\ep^{-3/2}\ft\nabla^-}Q^{-\ep^{-3/2}\ft/2}= e^{\ep^{-3/2}\ft[-\nabla^- + \frac12\log(I+2\nabla^+)]}$.
The scaled lattice is $\ep^{1/2}\zz$, so $\nabla^\pm\sim \ep^{1/2}$ and therefore $-\nabla^- + \tfrac12\log(I+2\nabla^+) = -\nabla^-+\nabla^+-(\nabla^+)^2+\tfrac43(\nabla^+)^3 +\mathcal{O}(\ep^2)$.
Now $(-\nabla^-+\nabla^+-(\nabla^+)^2+\tfrac43(\nabla^+)^3)f (x) = \tfrac43( f(x+3\ep^{1/2})-3f(x+2\ep^{1/2})+3f(x+\ep^{1/2})-f(x))-\tfrac12( f(x+2\ep^{1/2})-3f(x+\ep^{1/2})+3f(x)-f(x-\ep^{1/2}))\sim \tfrac13\ep^{3/2}\partial^3 f(x)$.
We also have $Q^{\ep^{-1}x}\sim e^{x\partial^2}$ under our scaling by the central limit theorem.
This explains how \eqref{eq:groupS} arises.  

Now we switch to the rigorous proof which uses the contour integral representations.
Note that $\fT^{\ep}_{-\ft,\fx}(v,u)$ and $\bar\fT^{\ep}_{-\ft,\fx}(v,u)$ only depend on $v-u$.
Writing $\fT^{\ep}_{-\ft,\fx}(u)=\fT^{\ep}_{-\ft,\fx}(u,0)$ and $\bar\fT^{\ep}_{-\ft,\fx}(u)=\bar\fT^{\ep}_{-\ft,\fx}(u,0)$, changing variables $w\mapsto\frac12(1-\ep^{1/2}\tilde w)$ in \eqref{def:sm} and \eqref{def:sn}, and using the scaling \eqref{eq:KPZscaling}, we have 
\begin{align}
&\fT^{\ep}_{-\ft,\fx}(u) = \frac{1}{2\pi\I} \oint_{\tts C_\ep}\d\tilde w\, e^{\ep^{-3/2}\ft F(\ep^{1/2} \tilde w,\ep^{1/2}\fx_\ep/\ft,\ep u_\ep/\ft ) },\label{eq:ft10}
\\
&\bar\fT^\ep_{-\ft,\fx}(u)= \frac{1}{2\pi\I} \oint_{\tts C_\ep}\d\tilde w\, e^{\ep^{-3/2}\ft F(\ep^{1/2} \tilde w,\ep^{1/2}\bar\fx_\ep/\ft,\ep\bar u_\ep/\ft ) },
\label{eq:ft1} \\
 &F( w,x,u)= \arctanh w -w - x\log(1- w^2) - u  \arctanh w,\label{eq:ft11}
\end{align}
where $\fx_\ep = \fx-\ep^{1/2}(u-\fa)/2-\ep /2$, $\bar\fx_\ep = \fx +\ep^{1/2}(u-\fa)/2 +3\ep /2$, $u_\ep= u-\ep^{1/2}$, and $\bar u_\ep= u+\ep^{1/2}$.  $C_\ep $ is a circle of radius $\ep^{-1/2}$ centred at $\ep^{-1/2}$
and $\arctanh w = \tfrac12[\log(1+w)-\log(1-w)]$.
It is striking how similar the formulas are in this representation and scaling, even if $\bar\fT^{\ep}_{-\ft,\fx}(u)$ comes from an analytic extension of $\fT^\ep_{-\ft,\fx}(u)$. Note that 
\begin{equation}\label{eq:ft3}
\partial_w  F( w,x,u)=(w-w_+)(w-w_-)(1- w^2)^{-1},\qquad w_\pm = w_\pm(x,u) \coloneqq -x \pm \sqrt{ x^2+ u},
\end{equation}
so, in particular, $\p_w F(\ep^{1/2} \tilde w,\ep^{1/2}\fx_\ep/\ft,\ep u_\ep/\ft)=\ep^{3/2}(\tilde w-w^\ep_+)(\tilde w-w^\ep_-)(1- \ep\tilde w^2)^{-1}$ with $w^\ep_\pm=w_\pm(\fx_\ep/\ft,u_\ep/\ft)$.
Keeping in mind that $\fT_{-\ft,\fx}=(\fT_{\ft,\fx})^*$, from \eqref{eq:fTdef} we see then that as, $\ep\searrow 0$, the exponents in \eqref{eq:ft10}, \eqref{eq:ft1} converge to the correct exponents in \eqref{eq:QRcvgce2}, \eqref{eq:QRcvgce3}.
Deform $C_\ep$ to the contour $\langle_{{}_\ep} ~\cup ~C^{\pi/3}_\ep$  where $\langle_{{}_\ep} $ is the part of the Airy contour $\langle$ (see footnote \ref{footairy}) within the ball of radius $\ep^{-1/2}$ centred at $\ep^{-1/2}$, and $C^{\pi/3}_\ep$ is the part of $C_\ep$ to the right of $\langle$.  
As $\ep\searrow 0$, $\langle_{{}_\ep} \longrightarrow \langle$, and it is easy to see that the integral over the part of $\langle$ which is not in $\langle_\ep$ goes to $0$, so it only remains to show that the integral over $C^{\pi/3}_\ep$ converges to $0$.
To see this note that the real part of the exponent of the integral over $C_\ep$ in \eqref{eq:ft10}, parametrized as $\tilde w=\ep^{-1/2}(1+e^{\I\theta})$, is given by $\ep^{-3/2}\ft[-1-\cos(\theta) + (\tfrac14+\mathcal{O}(\ep^{1/2}))\log(5+4\cos(\theta))]$.
Using $\log(1+x) \le x$ for $x\ge 0$, this is bounded by $\ep^{-3/2}\tfrac{\ft}2[-1-\cos(\theta)-\log(2)]$ for sufficiently small $\ep$.
The $\tilde{w}\in C^{\pi/3}_\ep$ correspond to $|\theta|\leq\pi/3$, so the exponent there is less than $-\ep^{-3/2}\kappa\ft$ for some $\kappa>0$.
Hence this part of the integral vanishes.

Now define the scaled walk $\fB_\ep(\fx) = \ep^{1/2}\big(B_{\ep^{-1}\fx} + 2\ep^{-1}\fx-1\big)$ for $\fx\in \ep\zz_{\geq0}$, interpolated linearly in between, and let $\ftau^\ep$ be the hitting time by $\fB_\ep$ of $\epi(-(\fh_0^\ep)^-)$.
By Donsker's invariance principle \cite{billingsley}, $\fB_\ep(\fx)$ converges locally uniformly in distribution to a Brownian motion $\fB(\fx)$ with diffusion coefficient $2$, and therefore (using \eqref{x0limit} and Prop. \ref{keyconvoftau}) the hitting time $\ftau^\ep$ converges to $\ftau$ as well.
\end{proof}

We will compute next the limit of \eqref{eq:TASEPtofp} using \eqref{eq:extKernelProb} under the scaling \eqref{eq:KPZscaling}.
To this end we change variables in the kernel as in Lem. \ref{lem:KernelLimit1}, so that for $z_i=2\ep^{-1}\fx_i+\ep^{-1/2}(u_i+\fa_i)-2$ we need to compute the limit of $\ep^{-1/2}\big(\bar\chi_{2\ep^{-1}\fx-2}K_t\bar\chi_{2\ep^{-1}\fx-2}\big)(z_i,z_j)$.
Note that the change of variables turns $\bar\chi_{2\ep^{-1}\fx-2}(z)$ into $\bar\chi_{-\fa}(u)$.
We have $n_i<n_j$ for small $\ep$ if and only if $\fx_j<\fx_i$ and in this case we have, under our scaling,
\begin{equation}\label{convergence1}\ep^{-1/2}Q^{n_j-n_i}(z_i,z_j)\longrightarrow e^{(\fx_i-\fx_j)\p^2}(u_i,u_j),\end{equation}
as $\ep\searrow 0$.
The rescaled second term in \eqref{eq:Kt-2}, $\ep^{-1/2}(\SM_{-t,-{n}_i})^*\SN^{\oepi(X_0)}_{-t,n_j}(z_i,z_j)$,  can be written as $(\fT_{-\ft,\fx_i}^\ep)^*\fT^{\ep,\epi(-(\fh_0^\ep)^-)}_{-\ft,-\fx_j}(u_i,u_j)$, and we can read off from Lem. \ref{lem:KernelLimit1} that this can be expected to converge to $(\fT_{-\ft,\fx_i})^*\fT^{\epi(-\fh_0^-)}_{-\ft,-\fx_j}(u_i,u_j)$.
The limiting kernel 
\begin{equation}
 \fK_{\lim}\coloneqq e^{(\fx_i-\fx_j)\p^2}\uno{\fx_j<\fx_i}+(\fT_{-\ft,\fx_i})^*\fT^{\epi(-\fh_0^-)}_{-\ft,-\fx_j}\label{eq:Klim}
\end{equation}
would be surrounded by projections $\bar\chi_{-\fa}$.\noeqref{eq:Klim}
For aesthetic reasons, it is nicer to have projections $\chi_{\fa}$, so we change variables $u_i\longmapsto-u_i$ and replace the Fredholm determinant of the kernel by that of its adjoint to get 
$\det\!\left(\fI-\chi_{\fa}\fK^{\hypo(\fh_0)}_{\ft,\uptext{ext}}\chi_{\fa}\right)$ with $
\fK^{\hypo(\fh_0)}_{\ft,\uptext{ext}}(u_i,u_j)=\fK_{\lim}(\fx_j,-u_j;\fx_i,-u_i)$.
The choice of superscript $\hypo(\fh_0)$ in the resulting kernel comes from  \eqref{eq:Sepi-flipped}, which together with $\fT_{-\ft,\fx}(-u,v)=(\fT_{\ft,\fx})^*(-v,u)$ yield

\begin{prop}{\bf (One-sided fixed point formula)}\label{prop:Kfixedpthalf}
\enspace Let $\fh_0\in \UC$ with  $\fh_0(\fx) = -\infty$ for $\fx>0$.
Assume that we start TASEP with right-finite initial data $X_0$ such that the rescaled height function $\fh^\ep(\ft,\fx)$ given by \eqref{eq:hep} satisfies $\fh^\ep_0(\fx)\coloneqq\fh^\ep(0,\fx)\longrightarrow\fh_0(\fx)$ in distribution in $\UC$ as $\ep\to0$. 
Then for any distinct $\fx_1,\dotsc,\fx_m\in\rr$ and any $\fa_1,\dotsc,\fa_m\in\rr$ we have
\begin{equation}
\lim_{\ep\to0}\pp_{\fh^\ep_0}\!\left(\fh^\ep(\ft,\fx_1)\leq \fa_1,\dotsc,\fh^\ep(\ft,\fx_m)\leq \fa_m\right)
=\det\!\left(\fI-\chi_{\fa} \fK^{\hypo(\fh_0)}_{\ft,\uptext{ext}}\chi_{\fa}\right)_{L^2(\{\fx_1,\dotsc,\fx_m\}\times\rr)}\label{eq:onesideext}
\end{equation}
with
\begin{equation}\label{eq:Kexthalf}
\fK^{\hypo(\fh_0)}_{\ft,\uptext{ext}}(\fx_i,\cdot;\fx_j,\cdot)=-e^{(\fx_j-\fx_i)\partial^2}\uno{\fx_i<\fx_j}
+(\fT^{\hypo(\fh_0^-)}_{\ft,-\fx_i})^*\fT_{\ft,\fx_j}.
\end{equation} 
\end{prop}

Our computations here only give pointwise convergence to each of the factors in \eqref{eq:Kexthalf}; even pointwise convergence of the kernels does not follow as there is an integration in the middle of $(\fT^{\hypo(\fh_0^-)}_{\ft,-\fx_i})^*\fT_{\ft,\fx_j}$.
In  Appx.\,\ref{app:hs-estimates} we prove that the operators actually converge in trace class, which  yields convergence of the Fredholm determinants.  

\begin{rem}
A remarkable thing has happened in the limiting operation, showing how non-trivial the limit is. From the biorthogonality condition, the one point TASEP kernel $K_t(n,\cdot;n,\cdot)$ for any initial data is easily seen to be a projection. This property is \emph{lost} in the limit; $\fK^{\hypo(\fh_0)}_{\ft,\uptext{ext}}(\fx,\cdot;\fx,\cdot)$ is \emph{not} a projection in general. In the special case of narrow wedge, it is. But for typical examples, such as half-flat, or flat (with the two-sided formula to appear) it is readily checked that it is not a projection.
\end{rem}

\subsection{From one-sided to two-sided formulas}\label{sec:1to2sided}

The formula for the KPZ fixed point with general initial data $\fh_0$ is obtained in the $L\to\infty$ limit of the formula with truncated initial data $\fh_0^L(\fx)=\fh_0(\fx)\uno{\fx\leq L}-\infty\cdot\uno{\fx>L}$, which is derived from the previous proposition by translation invariance.
The fact that the $L\to \infty$ and $\ep\to 0$ limits commute follows from the fact that the bound in Lem. \ref{cutofflemma} is independent of $\ep>0$.

Given any function $\fg$ we write
\[\fg^{\pm}(\fy)=\fg(\pm\fy)\]
for $\fy\geq0$.
The shift invariance of TASEP, \eqref{eq:transInv}, tells us that $\fh(\ft, \fx; \fh_0^L)\stackrel{\text{dist}}{=} \fh(\ft,\fx-L;\theta_L\fh_0^L)$, where $\theta_L$ is the shift operator from \eqref{eq:shift}, extended to real $L$.
With these shifts, Prop.\,\ref{prop:Kfixedpthalf} tells us that for $\UC$ cutoff data
$\fh_0^{\ep,L}$ (see Def.\,\ref{def:cutoff}),
\begin{equation}\label{eq:oneone}\lim_{\ep\to0}\pp_{\fh^{\ep,L}_0}\!\left(\fh^\ep(\ft,\fx_1)\leq \fa_1,\dotsc,\fh^\ep(\ft,\fx_m)\leq \fa_m\right)
=\det\!\left(\fI-\chi_{\fa}\wt\fK^{\theta_L\fh_0^L}_{L,\uptext{ext}}\chi_{\fa}\right)_{L^2(\{\fx_1,\dotsc,\fx_m\}\times\rr)}\end{equation}
with
\begin{equation}\label{eq:onesidedforlimit}
\wt\fK^{\theta_L\fh_0^L}_{L,\uptext{ext}}(\fx_i,\cdot ;\fx_j,\cdot)=-e^{(\fx_j-\fx_i)\p^2}\uno{\fx_i<\fx_j}+ (\fT^{\hypo((\theta_L\fh_0^L)^-)}_{\ft,-\fx_i+L})^*\fT_{\ft,\fx_j-L}.
\end{equation}
Use \eqref{eq:groupS} to write the second term as $e^{\fx_i\p^2}\!\big((\fT^{\hypo((\theta_L\fh_0^L)^-)}_{\ft,L})^*\fT_{\ft,-L}\big)e^{-\fx_j\p^2}$.
Since $(\theta_{L}\fh_0^L)^+(\fy)=-\infty$ for all $\fy\geq0$, we have $\fT^{\hypo((\theta_L\fh_0^L)^+)}_{\ft,-L}\equiv0$, and then we may rewrite $(\fT^{\hypo((\theta_L\fh_0^L)^-)}_{\ft,L})^*\fT_{\ft,-L}$ as
\begin{equation}
\fI-(\fT_{\ft,L}-\fT^{\hypo((\theta_L\fh_0^L)^-)}_{\ft,L})^*(\fT_{\ft,-L}-\fT^{\hypo((\theta_L\fh_0^L)^+)}_{\ft,-L}).
\end{equation}
The crucial fact, first discovered in \cite{flat}, is that the last expression depends on $L$ only through $\fh_0^L$, and it actually equals $\fK^{\hypo(\fh_0^L)}_\ft$ with
\begin{equation}
\fK^{\hypo(\fh_0)}_\ft= \fI - (\fT_{\ft,0} - \fT^{\hypo(\fh^-_0)}_{\ft,0} )^*(\fT_{\ft,0}-\fT^{\hypo(\fh^+_0)}_{\ft,0}),\label{eq:heatkerout2}
\end{equation}
see \eqref{eq:trcldecomp2a} below and the proof sketch that follows it (note also that in this formula the function $\fh^+_0$ appearing in the last operator no longer needs to be truncated in any way).
Furthermore, it was shown in \cite{flat} (for a more restricted class of $\fh_0$) that we can take $L\to\infty$ on $\fK^{\hypo(\fh_0^L)}_\ft$. 
More precisely, and in the context of the initial data $\fh_0\in\UC$ of the present paper, since $\fh_0^L\longrightarrow \fh_0$ in $\UC$, by Thm.\,\ref{tm:bst1} this kernel converges in trace norm to $\fK^{\hypo(\fh_0)}_\ft$.
The limiting \emph{(extended) Brownian scattering operator} can then be written as\footnote{There is a slight abuse of notation here, and in our earlier rewriting of the second term of \eqref{eq:onesidedforlimit}, when we write $e^{\fx_j\p^2}$ for $\fx_j<0$.
But note that in \eqref{eq:trcldecomp2aa} $e^{\fx_j\p^2}$ appears to the right of $\fK^{\hypo(\fh_0^L)}_{\ft}$, and the action of the backwards heat kernel is well-defined when applied to the second variable of $\fK^{\hypo(\fh_0)}_{\ft}$; the same is true for the earlier formula.
Whenever we write $\fK^{\hypo(\fh)}_{\ft}e^{\fy\p^2}$ for $\fy<0$, we mean that the backwards heat kernel is applied to this second variable first.
In other words, $\fK^{\hypo(\fh)}_{\ft}e^{\fy\p^2}$ is shorthand for the cumbersome $\big(e^{\fy\p^2}(\fK^{\hypo(\fh)}_{\ft})^*\big)^*$.}
\begin{equation} \label{eq:trcldecomp2aa}
\fK^{\hypo(\fh_0)}_{\ft,\uptext{ext}}
= -e^{(\fx_j-\fx_i)\p^2}\uno{\fx_i<\fx_j}+e^{-\fx_i\p^2}\fK^{\hypo(\fh_0)}_\ft e^{\fx_j\p^2}.
\end{equation}
We sometimes also refer to the one-point kernel \eqref{eq:heatkerout2} as the Brownian scattering operator since it is clear how to obtain one from the other.
As we mentioned, by Lem. \ref{cutofflemma} we can interchange limits on the left hand side of \eqref{eq:oneone}.
So we have shown:

\begin{thm}\label{prop:Kfixedptconv}
\enspace Let $\fh_0\in \UC$ and let $\fh_0^\ep$ be rescaled TASEP height functions converging to $\fh_0$ in $\UC$. Let $\fx_1,\dotsc,\fx_m,\fa_1,\dotsc,\fa_m\in\rr$. Then
\begin{equation}
\lim_{\ep\to0}\pp_{\fh^\ep_0}\!\left(\fh^\ep(\ft,\fx_1)\leq \fa_1,\dotsc,\fh^\ep(\ft,\fx_m)\leq \fa_m\right)
=\det\!\left(\fI-\chi_{\fa} \fK^{\hypo(\fh_0)}_{\ft,\uptext{ext}}\chi_{\fa}\right)_{L^2(\{\fx_1,\dotsc,\fx_m\}\times\rr)}.\label{eq:onesideext2pre}
\end{equation}
\end{thm}

\subsection{Tightness and Markov property}\label{sec:tightandmarkov}

The local H\"older spaces $\mathscr C^\beta_{\g}$, $\beta\in(0,1/2)$, $\g<\infty$ defined just after \eqref{eq:defHolderNorm} are compact subsets of our state space $\UC$.

\begin{thm}{\bf (H\"older $\frac12-$ regularity in space)}\label{regep}
\enspace Fix $\ft>0$, $\fh_0\in \UC$ and initial data $X^\ep_0\,$ for TASEP such that 
$\fh^\ep(0,\cdot)\xrightarrow[\ep\to0]{}\fh_0$ in distribution, in $\UC$.
Let $\fh^{\ep}(\ft,\cdot)\in\UC$ be given by \eqref{eq:hep}.
Then for each $\beta\in (0,1/2)$ and $M<\infty$,
\begin{equation}
\lim_{A\to \infty} \limsup_{\ep\to 0} \pp( \| \fh^\ep(\ft)\|_{\beta, [-M,M]}\ge A) =0.\label{tight1ep}
\end{equation}
Consequently, if $\pp_\ep$ represents the law of the functions $\fh^{\ep}(\ft,\cdot)\in\UC$ given by \eqref{eq:hep}, then the family of probability measures
$\{\pp_\ep\}_{0<\ep<1}$ on $\UC$ is \emph{tight} (precompact in the topology of weak convergence of measures).
\end{thm} 
	
The H\"older regularity \eqref{tight1ep} will be proved in Appx.\,\ref{sec:reg} using the exact formulas.
The method is the Kolmogorov continuity theorem, which reduces regularity to two point functions, which we can estimate using trace norms following the proof for the Airy$_1$ process in \cite{quastelRemAiry1}.  To prove the tightness, we need to find compact sets $K_\delta$ in  $\UC$ such that $\limsup_{\delta\to0}\limsup_{\ep\to0}\pp( \fh_\ep(\ft)\not\in K_\delta)=0$.
Since the spaces $\mathscr{C}^{\beta}_{\g}$ are compact in $\UC$, tightness follows from \eqref{tight1ep} as long as we can show that if for all $\ep>0$, $\fh^\ep(0,\fx) \le \gga + \g |\fx|$ for some $\g<\infty$ almost surely, then, for some (possibly random) $\g_\ft<\infty$, $\fh^\ep(\ft,\fx) \le \g_\ft(1+|\fx|)$ for all $\ep>0$ with probability $1$.
From the preservation of max property for TASEP (the analog of Thm.\,\ref{thm:sym}\eqref{pom} below), it suffices to show that $\lim_{A\to \infty}\limsup_{\ep\to 0}\pp( \fh^\ep(\ft, \fx; \g(1+\fx)\uno{\fx>0}) \le A (1+|\fx|), \fx\in\rr)= 1$.
Let $\bar\fh^{\ep,B}_0$ be a rescaled simple asymmetric random walk path with $\bar\fh^{\ep,B}_0 (0) =B$ and  drift $B$.  Then $\lim_{B\to \infty}\limsup_{\ep\to 0}\pp( \bar\fh^{\ep,B}_0(\fx) \ge \g(1+\fx)\uno{\fx>0}, \fx\in\rr)= 1$.  Since the asymmetric random walk is invariant, and the drift
under rescaling is convergent, the height shift is as well.  Therefore 
$\lim_{A\to \infty}\pp( \fh^\ep(\ft, \fx; \bar\fh^{\ep,B}_0(\fx)) \le A (1+|\fx|),\, \fx\in\rr)= 1$ and the result follows from the ordering.

At this point it follows that any fixed $\ft$ distributional limit $\fh(\ft,\cdot)$ has 
finite dimensional distributions given by the right hand side of \eqref{eq:onesideext2pre}.  In particular, it is unique in distribution.  We upgrade to multiple times using the Markov property. However, while one expects the limit of Markov processes to be Markov, this is not always the case.
Note that the limiting transition probabilities given by \eqref{eq:onesideext2} are Feller (continuous functions of $\fh\in\UC$) by Thm.\,\ref{tm:bst1} and the fact that $\mathcal{B}_0(\UC)$, introduced in Sec.\,\ref{UC}, is a generating family for $\mathcal{B}(\UC)$. 

\begin{lem}\label{lem:MP} 
Let $\funnyp^\ep_\fh(t,A)$ be Feller Markov kernels on a Polish space $\mathscr{S}$ for each $\ep>0$, and $\funnyp_\fh(t,A)$ a measurable family of Feller probability kernels on $\mathscr{S}$, such that for each $t>0$ and $\delta>0$ there is a compact subset $K_\delta$ of $\mathscr{S}$ such that $\funnyp^\ep_\fh(t,K_\delta^\mathsf{c})<\delta$, $\funnyp_\fh(t,K_\delta^\mathsf{c})<\delta$ and  $\lim_{\ep\to 0} \funnyp^\ep_\fh(t,A)= \funnyp_\fh(t,A)$ uniformly over $\fh\in K_\delta$ for each $A$ in a generating family.
Then $\funnyp_\fh(t,A)$ satisfies the Chapman-Kolmogorov equations 
\begin{equation} \int \funnyp_\fh(t,\d\fg) \funnyp_\fg(s,A) = \funnyp_\fh(t+s,A).
\label{CKeqns}\end{equation}
\end{lem}

\begin{proof}
   Fix $s,t>0$, $\fh\in \mathscr{S}$, $\delta>0$ and $A\in\mathcal{B}(\mathscr{S})$, choose a compact $K_\delta\subseteq \mathscr{S}$, and choose $\ep_0$ so that for all $\ep<\ep_0$,  $\funnyp^\ep_\fh(t,K^\mathsf{c}_\delta) + \funnyp_\fh(t,K^\mathsf{c}_\delta)<\delta/3$, $|\funnyp^\ep_\fg(s,A) - \funnyp_\fg(s,A)|  <\delta/3$ for all $\fg\in K_\delta$, and $|\tsm\int_{\mathscr{S}} ( \funnyp^\ep_\fh(t,\d\fg) -\funnyp_\fh(t,\d\fg))   \funnyp_\fg(s,A)|<\delta/3$.
   Then $\int_{\mathscr{S}}\big[\funnyp^\ep_\fh(t,\d y) \funnyp^\ep_\fg(s,A) -  \funnyp_\fh(t,\d\fg) \funnyp_\fg(s,A)\big]$ is bounded in absolute value by $\funnyp^\ep_\fh(t,K^\mathsf{c}_\delta) + \funnyp_\fh(t,K^\mathsf{c}_\delta) $ plus  
  \begin{equation}
 	\left|\int_{K_\delta}  \funnyp^\ep_\fh(t,\d\fg) ( \funnyp^\ep_\fg(s,A) -  \funnyp_\fg(s,A) )\right|  + \left|\int_{\mathscr{S}} (\funnyp^\ep_\fh(t,\d\fg) -\funnyp_x(t,\d\fg))  \funnyp_\fg(s,A) \right|,
  \end{equation} 
  all three of which are $<\delta/3$.
\end{proof}

In Appx.\,\ref{app:hs-estimates} we will show

\begin{prop}\label{uniformtightness}  The convergence in Thm.\,\ref{prop:Kfixedptconv} is uniform over initial data $\fh^\ep(0,\cdot)$ in sets of locally bounded H\"older $\beta$ norm, $\beta\in (0,1/2)$.
\end{prop}

As a consequence, we can make the following definition.

\begin{defn}{\bf (KPZ fixed point)}\label{def:fixpt}
\enspace The \emph{KPZ fixed point} is the (unique)  Markov process taking values on $\UC$ with transition probabilities 
given by the extension from the cylindrical sub-algebra $\mathcal{B}_0(\UC)$ to the Borel sets $\mathcal{B}(\UC)$ (see Sec.\,\ref{UC}) of
\begin{equation}
\pp_{\fh_0}\!\left(\fh(\ft,\fx_1)\leq \fa_1,\dotsc,\fh(\ft,\fx_m)\leq \fa_m\right)
=\det\!\left(\fI-\chi_{\fa} \fK^{\hypo(\fh_0)}_{\ft,\uptext{ext}}\chi_{\fa}\right)_{L^2(\{\fx_1,\dotsc,\fx_m\}\times\rr)}.\label{eq:onesideext2}
\end{equation}
The transition probabilities are Feller.
 \end{defn}

In the next section we describe its properties.
We complete this section by recording the statement we have obtained about the convergence of TASEP to this process from Thm.\,\ref{prop:Kfixedptconv}, Lem.\,\ref{lem:MP} and Prop.\,\ref{uniformtightness}.

\begin{theorem}{\bf (Convergence of TASEP)}\label{tm:2}
\enspace Let $\fh^\ep(\ft,\fx)$ be the rescaled height function of TASEP given by \eqref{eq:hep}.
Let $\fh_0$ be a element of $\UC$.
Assume that we have initial data for TASEP chosen to depend on $\ep>0$ in such a way that $\fh^\ep(0,\fx)\longrightarrow\fh_0(\fx)$ in $\UC$ as $\ep\to0$.
Then for each $0<\ft_1<\cdots<\ft_m$, the rescaled (multi-time) TASEP height function $(\fh^\ep(\ft_1,\cdot),\dotsc,\fh^\ep(\ft_m,\cdot))$ converges in distribution in $\UC^m$ to the (multi-time) KPZ fixed point $(\fh(\ft_1,\cdot),\dotsc,\fh(\ft_m,\cdot))$ with initial condition $\fh(0,\cdot)=\fh_0$.
The initial data can be random, and converging in distribution in $\UC$, provided that it is independent of the randomness used to evolve each Markov process.
\end{theorem}

\section{The invariant Markov process}\label{sec:invariant}

\subsection{Brownian scattering theory}\label{sec:bst} 

For height functions $\fh$ in our state space $\UC$ of upper semi-continuous functions (see Sec.\,\ref{UC}), define
local ``Hit'' and ``No hit'' operators by  
 \begin{align}
{\bf P}_{\ell_1,\ell_2}^{\uptext{No hit}\,\fh}(u_1,u_2)\d u_2&=\pp_{\fB(\ell_1)=u_1}\!\left(\fB(\fy)>\fh(\fy)\text{ on }[\ell_1,\ell_2],\,\fB(\ell_2)\in\d u_2\right)
\end{align}
and ${\bf P}_{\ell_1,\ell_2}^{\uptext{Hit}\,\fh}=\fI- {\bf P}_{\ell_1,\ell_2}^{\uptext{No hit}\,\fh}$,
where $\fB$ is a Brownian motion with diffusion coefficient $2$.  

The \emph{Brownian scattering transform} is the map which takes $\fh$ to the $\ft>0$ dependent operator (acting on suitable subspaces of $L^2(\rr)$) introduced in \eqref{eq:heatkerout2}, which can be written
\begin{equation}\label{eq:Kd}
\fK^{\hypo(\fh)}_{\ft}=\lim_{\small\substack{\ell_1\to -\infty\\\ell_2\to \infty}}(\fT_{\ft,\ell_1})^*{\bf P}_{\ell_1,\ell_2}^{\uptext{Hit}\,\fh}\fT_{\ft,-\ell_2},
\end{equation}
where $\fT_{\ft,\fx}=\exp\{ \fx\partial^2 + \tfrac{\ft}3\tts\partial^3 \}$
are defined by the kernels in \eqref{eq:fTdef}.
Before the limit, the right hand side of \eqref{eq:Kd} is exactly  $\fK^{\hypo(\fh_{\ell_1,\ell_2})}_{\ft}$ where $\fh_{\ell_1,\ell_2}$ is $\fh$ in $[\ell_1,\ell_2]$ and $-\infty$ otherwise.  The existence of the limit was first proved for a more restricted class in \cite{flat}.
Since $\fh_{\ell_1,\ell_2} \longrightarrow \fh$ in $\UC$, the limit in \eqref{eq:Kd} follows from Thm.\,\ref{tm:bst1} below\footnote{Thm. \ref{tm:bst1} also asks for the kernel to be conjugated by $\vartheta$, but as can be seen from the arguments in Appx. \ref{app:hs}, this is not necessary for the single-time kernel.}, in trace class in $L^2([\fa,\infty))$ for any fixed $\fa\in\rr$

It was  proved in \cite{flat} that the limit can also be represented as\footnote{The derivation in \cite{flat} takes a different route, starting with known path integral kernel formulas for the Airy$_2$ process and passing to limits. These known formulas themselves arise from the exact TASEP formulas for step initial data. Here we have started from general initial data and derived $\fK^{\hypo(\fh)}_{\ft}$ in a multi-point formula at a later time. Specializing the present derivation to the one-point case, the two routes are linked through time inversion, as explained around \eqref{eq:path-int-kernel-TASEPgem}.}\footnote{\cite{flat} works with the epi version of $\fK^{\hypo(\fh)}_\ft$, which is defined by considering the hitting probabilities of the epigraph of a lower semicontinuous function (see \eqref{eq:defSepi}), and only in the case $\ft=1$, but the proof can be adapted straightforwardly. That paper also works under an additional regularity assumption on the barrier function, the more general setting which we work with here can be handled as in Appx.\,\ref{app:hs}.}
\begin{equation} \label{eq:trcldecomp2a}
\fK^{\hypo(\fh)}_{\ft}=
 ( \fT^{\hypo(\fh^{\fx,-})}_{\ft,\fx} )^*\fT_{\ft,-{\fx}}  
 + (\fT_{\ft,{\fx}} )^* \fT^{\hypo(\fh^{\fx,+})}_{\ft,-{\fx}}  - ( \fT^{\hypo(\fh^{\fx,-})}_{\ft,{\fx}} )^* \fT^{\hypo(\fh^{\fx,+})}_{\ft,-{\fx}}
\end{equation}
for any choice of \emph{splitting point} $\fx$, where
\begin{equation}
\fh^{\fx,\pm}(\fy)=\fh(\fx\pm\fy)\label{eq:defsplith}
\end{equation}
for $\fy\geq0$; the case $\fx=0$ is just a rewriting of \eqref{eq:heatkerout2}, while in \cite{flat} it is shown that the right hand side does not depend on $\fx$. We  sketch the idea:
\begin{equation}\label{eq:splitmethod}
{\bf P}_{\ell_1,\ell_2}^{\uptext{No hit}\,\fh}(u_1,u_2) = p_{\ell_1,u_1} (\ell_2, u_2) p_{\ell_1,u_1, \ell_2,u_2}( \textup{no hit}),
\end{equation}
where $p_{\ell_1,u_1} (\ell_2, u_2)$ is the transition density for Brownian motion to 
be at $u_2$ at time $\ell_2$ given that it started at $u_1$ at time $\ell_1$, and 
where the second factor is the probability for a Brownian bridge with the same endpoints not to hit $\hypo(\fh)$, which we can write for $\fx\in [\ell_1,\ell_2]$ as
\begin{equation}
\int_{\fh(\fx)}^\infty p_{\ell_1,u_1, \ell_2,u_2}(\fB(\fx)\in\d z)p_{0, z,\fx-\ell_1, u_1}( \uptext{no~hit~} \fh^{\fx,-}\uptext{~on~} [0,\fx-\ell_1]) 
p_{0, z,\ell_2-\fx, u_2}( \uptext{no~hit~} \fh^{\fx,+}\uptext{~on~} [0,\ell_2-\fx]).
\end{equation}
Now $ p_{\ell_1,u_1} (\ell_2, u_2) p_{\ell_1,u_1, \ell_2,u_2}(\fB(\fx)\in\d z) = 
p_{0,z}(\fx-\ell_1, u_1)p_{0,z}(\ell_2-\fx, u_2)\d z$ so \eqref{eq:splitmethod} becomes
\begin{equation}
\int_{\fh(\fx)}^\infty\d z~~ p_{0, z}( \uptext{no~hit~} \fh^{\fx,-}\uptext{~on~} [0,\fx-\ell_1],\fx-\ell_1, u_1) 
p_{0, z}( \uptext{no~hit~} \fh^{\fx,+}\uptext{~on~} [0,\ell_2-\fx],\ell_2-\fx, u_2),
\end{equation}
where the notation is meant to indicate that the factors in the integrand are now densities.
Again we can write $p_{0,z}( \uptext{no~hit~} \fh\uptext{~on~} [0,\ell],\ell,u)$ as $
p_{0, z}( \ell, u) -p_{0,z}( \uptext{hit~} \fh\uptext{~on~} [0,\ell],\ell,u) $ which is just
$e^{\ell\partial^2} (z,u) - \int_0^\ell p_z(\ftau \in\d\fs) e^{(\ell-\fs)\partial^2}(\fB(\ftau),u)$.
Therefore the right hand side of \eqref{eq:Kd} goes to the right hand side of 
\eqref{eq:trcldecomp2a} as $\ell_1\to \infty$ and $\ell_2\to \infty$.

In \eqref{eq:trcldecomp2aa} we defined the extended version of the Brownian scattering transform, which in view of \eqref{eq:trcldecomp2a} can be written
\begin{multline} \label{eq:trcldecomp2ab}
\fK^{\hypo(\fh)}_{\ft,\uptext{ext}}
= -e^{(\fx_j-\fx_i)\p^2}\uno{\fx_i<\fx_j}+
 ( \fT^{\hypo(\fh^{\fx,-})}_{\ft,\fx-\fx_i} )^*\fT_{\ft,-{\fx}+\fx_j}  \\
 + (\fT_{\ft,{\fx}-\fx_i} )^* \fT^{\hypo(\fh^{\fx,+})}_{\ft,-{\fx}+\fx_j}  - ( \fT^{\hypo(\fh^{\fx,-})}_{\ft,{\fx}-\fx_i} )^* \fT^{\hypo(\fh^{\fx,+})}_{\ft,-{\fx}+\fx_j}.
\end{multline}

For fixed $\ft$, and after conjugating by
\begin{equation}\label{eq:vartheta}
  \vartheta f(\fx_i,u)=\vartheta_i(u)f(\fx_i,u)\qquad\uptext{with}\qquad\vartheta_i(u)=(1+u^2)^{2i}
\end{equation}
and cutting off by $\P_\fa$ (see \eqref{eq:defChis}), the Brownian scattering transform is continuous on $\UC$:

\begin{theorem}\label{tm:bst1}
 For fixed $\fa_1,\ldots,\fa_m>-\infty$  and $\ft> 0$,
$
\fh\longmapsto\vartheta\chi_{\fa}\fK^{\hypo(\fh)}_{\ft,\uptext{ext}}\chi_{\fa}\vartheta^{-1}$
is a continuous map from $\UC$ into the trace class operators on $L^2(\{\fx_1,\cdots,\fx_m\} \times \rr)$.
\end{theorem}

Moreover, a $\UC$ function $\fh$ can be recovered from its Brownian scattering transform $\fK^{\hypo(\fh)}_{\ft,\uptext{ext}}$:

\begin{theorem}{\bf (Inversion Formula)}\label{tm:bst2}
\enspace For any $\fh \in \UC$,
\begin{equation}
\lim_{\ft\searrow 0}\det\!\left(\fI-\chi_{\fa}\fK^{\hypo(\fh)}_{\ft,\uptext{ext}}\chi_{\fa}\right)_{L^2(\{\fx_1,\dotsc,\fx_m\} \times\rr)} = \prod_{j=1}^m\uno{\fh(\fx_j)\le \fa_j}.
\end{equation} 
\end{theorem}

The fact that the kernel appearing in Thm.\,\ref{tm:bst1} is trace class will be proved in Appx.\,\ref{app:hs}; the continuity stated in the result follows from the arguments in Appx.\,\ref{sec:cvgce}.
Thm.\,\ref{tm:bst2} follows directly from the Chapman-Kolmogorov equations \eqref{CKeqns}. 

\subsection{KPZ fixed point formula}\label{sec:fixedpt}

The Brownian scattering transform linearizes the time evolution of the fixed point transition probabilities:
At the level of Brownian scattering operators, the time flow is linear, satisfying the Lax equation
\begin{equation}
\partial_\ft \fK^{\hypo(\fh)}_{\ft,\uptext{ext}}= [ -\tfrac13 \partial^3, \fK^{\hypo(\fh)}_{\ft,\uptext{ext}}].\label{eq:fixedpt-lax}
\end{equation}
As shown in Sec. \ref{sec:123}, the Fredholm determinant maps this linear flow to the Markov transition probabilities given by the \emph{KPZ fixed point formula},
\begin{equation}
\pp_{\fh_0}\!\left(\fh(\ft,\fx_1)\leq\fa_1,\dotsc,\fh(\ft,\fx_m)\leq\fa_m\right)
=\det\!\left(\fI-\chi_{\fa}\fK^{\hypo(\fh_0)}_{\ft,\uptext{ext}}\chi_{\fa}\right)_{L^2(\{\fx_1,\dotsc,\fx_m\}\times\rr)},\label{eq:twosided-ext}
\end{equation}
The resulting Markov process, the \emph{KPZ fixed point} (see Def.\,\ref{def:fixpt}), is thus a stochastic integrable system in the sense discussed in the TASEP case (Sec.\,\ref{sec:integrability}).

As for TASEP, we also have a version of the fixed point formula in terms of a Fredholm determinant on $L^2(\rr)$ (as opposed to the ``extended space'' $L^2(\{\fx_1,\dotsc,\fx_m\}\times\rr)$).

\begin{prop}\label{prop:pathint-fixedpt}{\bf (Path integral formula for the KPZ fixed point)}
\enspace For $\fh_0\in\UC$, $\ft>0$, and $\fx_1<\dotsm<\fx_m$, we have
\begin{multline}\label{eq:twosided-path}
\pp_{\fh_0}\!\left(\fh(\ft,\fx_1)\leq\fa_1,\dotsc,\fh(\ft,\fx_m)\leq\fa_m\right)\\
=\det\!\left(\fI-\fK^{\hypo(\fh_0)}_{\ft,\fx_1}+\bar\P_{\fa_1}e^{(\fx_2-\fx_1)\p^2}\bar\P_{\fa_2}\dotsm e^{(\fx_m-\fx_{m-1})\p^2}\bar\P_{\fa_m}e^{(\fx_1-\fx_m)\p^2}\fK^{\hypo(\fh_0)}_{\ft,\fx_1}\right)_{L^2(\rr)},
\end{multline}
where
$\fK^{\hypo(\fh_0)}_{\ft,\fx}(\cdot,\cdot)=\fK^{\hypo(\fh_0)}_{\ft,\uptext{ext}}(\fx,\cdot,\fx,\cdot)$.
\end{prop}
This results from an application of \cite[Thm.\,3.3]{bcr}, and is proved in Appx.\,\ref{app:altBCR}.
Taking a continuum limit gives a very symmetric version of the fixed point formula, from which the skew time reversal symmetry, Thm.\,\ref{thm:sym}\eqref{str}, follows by the cyclicity of the  determinant. 

\begin{prop}{\bf (Continuum statistics)}\label{thm:contstats}
\enspace For any $\fh_0\in\UC$, $\fg\in \LC$, and $\ft>0$,
\begin{equation}\label{eq:fpform}
 \pp_{\fh_0}( \fh(\ft, \fx) \le \fg(\fx),\,\fx\in\rr) = \det\!\left(\fI-\fK^{\hypo(\fh_0)}_{\ft/2}\fK^{\epi(\fg)}_{-\ft/2}\right)_{L^2(\rr)}
\end{equation}
with $\fK^{\epi(\fg)}_{-\ft}(u,v)=\fK^{\hypo(-\fg)}_\ft(-u,-v)$.
\end{prop}

$\fK^{\epi(\fg)}_{-\ft}$ is just an upside down version of the Brownian scattering transform introduced in \eqref{eq:trcldecomp2a}, and is built in an analogous way out of hitting probabilities of the epigraph of lower semicontinuous functions (replacing $\fT^{\hypo(\fh)}_{\ft,\fx}$ by $\fT^{\epi(\fg)}_{-\ft,\fx}$, which was defined in \eqref{eq:defSepi}.)
In Appx.\,\ref{app:hs} we show that the operator inside the above Fredholm determinant is trace class after an appropriate conjugation.

\begin{proof}
Consider first $\fg\in\LC$ which is $\infty$ outside of some interval $[-R,R]$.
The left hand side of \eqref{eq:fpform} is then $\pp_{\fh_0}( \fh(\ft, \fx) \le \fg(\fx),\,\fx\in[-R,R])$, which we may obtain by computing $\pp_{\fh_0}( \fh(\ft, \fx_i) \le \fg(\fx_i),\,i=1,\dotsm,m)$ on a mesh $\fx_1<\dotsc<\fx_m$ of $[-R,R]$ and letting the mesh size go to $0$ as $m\to\infty$.
To this end we use Prop. \ref{prop:pathint-fixedpt} with $\fa_i=\fg(\fx_i)$.
Since $\fT_{\ft/2,\fx_1}\fK^{\hypo(\fh_0)}_{\ft,\fx_1}(\fT_{\ft/2,-\fx_1})^*=\fK^{\hypo(\fh_0)}_{\ft/2}$, we can conjugate the kernel inside the determinant in \eqref{eq:twosided-path} by $\fT_{\ft/2,\fx_1}$ to get
\begin{equation}
\fK^{\hypo(\fh_0)}_{\ft/2}-\big[\fT_{\ft/2,\fx_1}\bar\P_{\fg(\fx_1)}e^{(\fx_2-\fx_1)\p^2}\bar\P_{\fg(\fx_2)}\dotsm e^{(\fx_m-\fx_{m-1})\p^2}\bar\P_{\fg(\fx_m)}(\fT_{\ft/2,-\fx_m})^*\big]\fK^{\hypo(\fh_0)}_{\ft/2}.\label{eq:prefulllimit}
\end{equation}
Taking $\fx_1=-R$ and $\fx_m=R$, let $\fg^{(m)}\in\LC$ be given as $\fg^{(m)}(\fx_i+R)=\fg(\fx_i)$, $i=1,\dotsc,m$, and $\fg^{(m)}(\fx)=-\infty$ for all other values of $\fx$, and note that $\bar\P_{\fg(\fx_1)}e^{(\fx_2-\fx_1)\p^2}\bar\P_{\fg(\fx_2)}\dotsm e^{(\fx_m-\fx_{m-1})\p^2}\bar\P_{\fg(\fx_m)}=e^{2R\p^2}-\fT^{\epi(\fg^{(m)})}_{0,2R}$ (because the Brownian motion $\fB$ inside the epi operator can only hit $\epi(\fg^{(m)})$ at $0,\fx_2-\fx_1,\dotsc,\fx_m-\fx_1$, and the left hand side is simply the transition probability for $\fB$ in $[0,2R]$ staying below the same epigraph).
Then the term in brackets in \eqref{eq:prefulllimit} equals 
\[\fT_{\ft/2,-R}\big(\fT_{0,2R}-\fT^{\epi(\fg^{(m)})}_{0,2R}\big)(\fT_{\ft/2,-R})^*=(\fT_{-\ft/2,0})^*\big(\fT_{-\ft/2,0}-\fT^{\epi(\fg^{(m)})}_{-\ft/2,0}\big)=\fI-\fK^{\epi(\fg^{m})}_{-\ft/2},\] 
where the last equality follows from using the epi version of the expansion \eqref{eq:trcldecomp2a} split at $\fx=0$.
As a consequence, the right hand side of  \eqref{eq:twosided-ext} can be written as $\det\!\left(\fI-\fK^{\hypo(\fh_0)}_{\ft/2}\fK^{\epi(\fg^{(m)})}_{-\ft/2}\right)_{L^2(\rr)}$ (after using the cyclic property of the determinant).
Since $-\fg^{(m)}\to -\fg$ in $\UC$, one would like to use Thm.\,\ref{tm:bst1} to pass to the limit and obtain \eqref{eq:fpform}.
The difficulty is that in this Fredholm determinant we are missing the necessary conjugations, but this is resolved by using \eqref{eq:magicConjugation} and the comment that follows it, which implies that the trace norm estimates of Appx. \ref{app:hs} are strong enough to yield continuity in exactly the form we need.

In order to extend the result to all $\fg\in\LC$ it is enough to truncate $\fg$ to a function which is $\infty$ outside $[-R,R]$, apply the result we just proved, and then take $R\to\infty$.
The left hand side clearly converges to $\pp_{\fh_0}( \fh(\ft, \fx) \le \fg(\fx),\,\fx\in\rr)$, while the right hand side converges to the desired Fredholm determinant using again Thm. \ref{tm:bst1} and the same argument as above.
\end{proof}

Analogously to Thm.\,\ref{tm:bst2} we also have the inversion formula 
\begin{equation}
\lim_{\ft\searrow0}\det\!\left(\fI-\fK^{\hypo(\fh)}_{\ft/2}\fK^{\epi(\fg)}_{-\ft/2}\right)=\uno{\fh(\fx)\leq\fg(\fx)~\forall\fx\in\rr}.\label{eq:contstatsinv}
\end{equation}

\subsection{Properties of the KPZ fixed point}\label{sec:properties} The KPZ fixed point satisfies a number of additional properties, which can be proved based both on the explicit formula and on approximation from TASEP.

\begin{theorem}{\bf (Symmetries)}\label{thm:sym}
\enspace Let $\fh(\ft,\fx; \fh_0)$ denote the KPZ fixed point with initial data $\fh_0\in \UC$.\\[-14pt]
\begin{enumerate}[label=\normalfont{(\roman*)},ref=\roman*]
\item \label{123a}  \emph{(1:2:3 scaling invariance)} \enspace
$\alpha\tts\fh(\alpha^{-3}\ft,\alpha^{-2}\fx;\alpha^{-1}\fh_0(\alpha^{2}\fx) )\stackrel{\uptext{dist}}{=} \fh(\ft, \fx;\fh_0), \quad \alpha>0$.
\item \label{ibm} \emph{(Invariance of Brownian motion)} \enspace  If $\fB(\fx)$ is a two-sided Brownian motion, then for each $\ft>0$, $\fh(\ft,\fx; \fB)-\fh(\ft,0;\fB)$ is a two-sided Brownian motion in $\fx$ with diffusion coefficient $2$.
\item  \label{str} \emph{(Skew time reversibility)} \enspace
$\pp\big(\fh(\ft,\fx; \fg)\le -\ff(\fx)\big) =\pp\big(\fh(\ft,\fx;\ff)\le -\fg(\fx)\big),\quad\ff,\fg\in\UC$.
\item \label{sis} \emph{(Stationarity in space)} \hskip0.1in$\fh(\ft,\fx+\fu; \fh_0(\fx-\fu))\stackrel{\uptext{dist}}{=} \fh(\ft,\fx; \fh_0)$.
\item  \label{ri} \emph{(Reflection invariance)} \hskip0.1in$\fh(\ft, -\fx;\fh_0(-\fx))\stackrel{\uptext{dist}}{=} \fh(\ft,\fx; \fh_0)$.
\item \label{ai} \emph{(Affine invariance)}  \hskip0.1in$\fh(\ft,\fx; \fh_0(\fx) + \fa + c\fx)\stackrel{\uptext{dist}}{=} \fh(\ft,\fx+\frac12c\ft; \fh_0(\fx)) + \fa + c\fx + \frac14c^2\ft$.
\item \label{pom} \emph{(Preservation of max)}
\enspace For any $\ff_1,\ff_2\in\UC$,
$\fh(\ft,\fx;\ff_1\vee \ff_2)\stackrel{\uptext{dist}}{=} \fh(\ft,\fx; \ff_1)\vee \fh(\ft,\fx; \ff_2)$.
\end{enumerate}
\end{theorem}

These properties follow from Thm.\,\ref{tm:2}; \eqref{123a} since $\fh$ is a limit and therefore a fixed point of the 1:2:3 rescaling and \eqref{ibm}--\eqref{pom} from the analog properties for TASEP.
\eqref{str}--\eqref{ai} can alternatively be seen to follow directly from the fixed point formula \eqref{eq:twosided-ext} (see also Thm.\,\ref{thm:contstats} in the case of \eqref{str}), and the affine invariance can also be proved from the variational formula,  Thm.\,\ref{thm:airyvar}.  
Note that in \eqref{ibm} there is a non-trivial global height shift and the Brownian motion measure itself is not invariant.
Combining \eqref{ibm} and \eqref{ai} one sees that drifted Brownian motion $\fB(\fx) + \rho\tts\fx$ is also invariant.  

Another property which follows directly by approximation from TASEP (see Lem. \ref{cutofflemma}) is

\begin{theorem}{\bf (Finite propagation speed)}\label{thm:fps}
\enspace Let $\fh_0\in\UC$ with $\fh_0(\fx)\le \gga + \g |\fx|$ and let $\fx_1,\ldots,\fx_m\in \rr$.  
For any $\delta>0$ there exists $C<\infty$ depending only on $\gga,\g, L$, and $\max_i|\fx_i|$, such that for any
$\tilde\fh_0\in \UC$ with  $\tilde\fh_0(\fx)\le \gga + \g |\fx|$ and $\fh_0(\fx)=\tilde\fh_0(\fx)$ for $|\fx|\le L$,
\begin{equation}\label{fsp2}
 \big|\pp_{\fh_0}\tsm\big( \fh(\ft, \fx_i) \le \fa_i,\, i=1,\ldots,m\big)- \pp_{\tilde\fh_0}\!\big( \fh(\ft, \fx_i) \le \fa_i,\, i=1,\ldots,m\big)\big|\le C\tts e^{-(\frac23 - \delta) L^3}.
\end{equation}
\end{theorem}

By bounding above and below by known cases, we obtain rather easily\footnote{These estimates are sharp as $\fa_i\to \infty$ (ignoring lower order terms) and do not depend on the initial data (within $\UC$), but they are not sharp as $\fa_i\to -\infty$. In fact the left tail depends on the initial data; for instance, $\pp( \fh(\ft, \fx) \le \fa)$ is of order $e^{ -\frac1{12}\ft^{-1} | \fa|^{3}}$ for narrow wedge but of order $e^{ -\frac1{6}\ft^{-1} | \fa|^{3}}$ for flat.}

\begin{prop}{\bf (Tail estimates)}\label{prop:tails}
\enspace Let $\fh_0\in\UC$, $\fh_0\not\equiv -\infty$.  Then for fixed $\ft>0$ we have
\begin{align}
 1-e^{ -\frac1{12}\ft^{-1} |\tsm\max_i \fa_i|^{3}(1+o_1(1)) }  \leq \pp_{\fh_0}( \fh(\ft, \fx_i) \ge \fa_i,\, i=1,\ldots,m) \leq e^{-\frac{4}{3} \ft^{-1/2} |\tsm\max_i \fa_i|^{3/2}(1+o_2(1))},
\end{align}
where $o_1(1)\longrightarrow0$ as $\max_i\fa_i\to-\infty$, $~o_2(1)\longrightarrow0$ as $\max_i\fa_i\to\infty$, and both depend only on $\fh_0$, $\ft$, and the $\fx_i$'s.
\end{prop}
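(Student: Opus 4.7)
The plan is to reduce the multi-point event to a one-point event, apply skew time reversal (Proposition~\ref{symmetries}(ii)) to convert the one-point tail into an Airy$_2$ tail event via \eqref{eq:airy2}, and then appeal to the Tracy--Widom GUE tail asymptotics
\begin{equation}
\pp\bigl(\chi_{\rm GUE}\geq s\bigr)=e^{-\frac{4}{3}s^{3/2}(1+o(1))},\qquad \pp\bigl(\chi_{\rm GUE}\leq -s\bigr)=e^{-\frac{1}{12}s^{3}(1+o(1))}\qquad(s\to\infty).
\end{equation}
The 1:2:3 scaling invariance (Proposition~\ref{symmetries}(i)) reduces general $\ft>0$ to $\ft=2$, and the resulting $(\ft/2)^{1/3}$ rescaling of fluctuations converts the GUE constants $\frac{1}{12}$ and $\frac{4}{3}$ into the stated $\frac{1}{6}\ft^{-1}$ and $\frac{4\sqrt{2}}{3}\ft^{-1/2}$.

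For the lower bound, I would start from the union bound
\begin{equation}
1-\pp_{\fh_0}\bigl(\fh(\ft,\fx_i)\geq\fa_i,\ i=1,\ldots,n\bigr)\leq \sum_{i=1}^{n}\pp_{\fh_0}\bigl(\fh(\ft,\fx_i)\leq\fa_i\bigr).
\end{equation}
For each $i$, apply Proposition~\ref{symmetries}(ii) with $\fg=\fh_0$ and with $\ff$ the $\UC$ function equal to $-\fa_i$ at $\fx_i$ and $-\infty$ elsewhere (i.e.\ $\ff=\mathfrak{d}_{\fx_i}-\fa_i$), then use affine invariance (v) together with \eqref{eq:airy2} to obtain at $\ft=2$
\begin{equation}
\pp_{\fh_0}\bigl(\fh(2,\fx_i)\leq\fa_i\bigr)=\pp\bigl(\aip_2(\fx)-(\fx-\fx_i)^2\leq\fa_i-\fh_0(\fx)\ \text{for all } \fx\in\rr\bigr),
\end{equation}
which is exactly the identity recorded in footnote~\ref{ftnote}. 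Restricting the universal quantifier to any $\bar\fx$ with $\fh_0(\bar\fx)>-\infty$ bounds this by the shifted $F_{\rm GUE}$ probability $\pp\bigl(\chi_{\rm GUE}\leq \fa_i+(\bar\fx-\fx_i)^2-\fh_0(\bar\fx)\bigr)$; as $\fa_i\to-\infty$ its logarithm is $-\tfrac{1}{12}|\fa_i|^{3}(1+o(1))$, and after scaling to general $\ft$ it becomes $-\tfrac{1}{6}\ft^{-1}|\fa_i|^{3}(1+o(1))$. Since $\max_i\fa_i\to-\infty$ forces every $\fa_i\to-\infty$, the dominant summand in the union bound is the one with smallest $|\fa_i|$, which equals $|\max_j\fa_j|$; absorbing the combinatorial factor $n$ and sub-exponential corrections into $o_1(1)$ yields the stated lower bound.

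For the upper bound, choose $i^{*}$ achieving $\max_i\fa_i$ and observe
\begin{equation}
\pp_{\fh_0}\bigl(\fh(\ft,\fx_i)\geq\fa_i,\ \forall\,i\bigr)\leq\pp_{\fh_0}\bigl(\fh(\ft,\fx_{i^{*}})\geq\fa_{i^{*}}\bigr).
\end{equation}
Taking complements in the same skew-time-reversal identity gives, at $\ft=2$,
\begin{equation}
\pp_{\fh_0}\bigl(\fh(2,\fx_{i^{*}})\geq\fa_{i^{*}}\bigr)=\pp\Bigl(\sup_{\fx\in\rr}\bigl[\aip_2(\fx)-(\fx-\fx_{i^{*}})^2+\fh_0(\fx)\bigr]\geq\fa_{i^{*}}\Bigr).
\end{equation}
Since $\fh_0(\fx)\leq C(1+|\fx|)$, the bracketed quantity is dominated by $\aip_2(\fx)-\tfrac{1}{2}(\fx-\fx_{i^{*}})^2+O(1)$ for large $|\fx|$, so the effective window for the supremum is $|\fx-\fx_{i^{*}}|=O(\fa_{i^{*}}^{1/2})$. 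A standard covering of this window by unit intervals, combined with the one-point Airy$_2$ upper tail at each node and the parabolic cutoff outside, gives the leading asymptotics $\exp\bigl(-\tfrac{4}{3}\fa_{i^{*}}^{3/2}(1+o(1))\bigr)$ at $\ft=2$; scaling then produces the factor $\tfrac{4\sqrt{2}}{3}\ft^{-1/2}$.

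The main obstacle is the supremum bound in the upper-tail step: one must verify that the sharp constant $\tfrac{4}{3}$ survives passage from a single $\aip_2$ marginal to the supremum over $\fx\in\rr$ of $\aip_2(\fx)-(\fx-\fx_{i^{*}})^2+\fh_0(\fx)$. This is a standard concentration/modulus-of-continuity argument for $\aip_2$ and does not require new input beyond the classical Airy$_2$ estimates.
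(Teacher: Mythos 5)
Your argument for the lower bound (the LHS of the chain of inequalities) is essentially identical to the paper's: a union bound reduces to one-point marginals, skew time reversal plus \eqref{eq:airy2} rewrites $p_1(\fa_i)$ as an ``everywhere'' event for $\aip_2$ minus a parabola, restricting to a single $\bar\fx$ with $\fh_0(\bar\fx)>-\infty$ gives a shifted $F_{\rm GUE}$, and the GUE left-tail $\exp(-\tfrac1{12}s^3)$ with the $(\ft/2)^{1/3}$ rescaling delivers the stated constant. This is exactly the footnote argument the paper cites.

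For the upper bound (the RHS), however, you take a genuinely different route. The paper never passes through a supremum over $\fx$: it uses Prop.~\ref{symmetries}(v)--(vi) (affine invariance and preservation of max) together with $\fh_0(\fx)\le C(1+|\fx|)$ to dominate $\fh(2\ft,\cdot;\fh_0)$ pathwise by an affine shift of the \emph{flat} solution $\ft^{1/3}\aipo(\ft^{-2/3}\cdot)$, bounds the multipoint probability by the single worst index, and then invokes the one-point GOE upper tail. This requires only a one-point marginal estimate and two deterministic inequalities. Your approach instead takes complements in the same skew-time-reversal identity used for the lower bound, arriving at
$\pp_{\fh_0}(\fh(2,\fx_{i^*})\ge\fa_{i^*})=\pp\bigl(\sup_\fx[\aip_2(\fx)-(\fx-\fx_{i^*})^2+\fh_0(\fx)]\ge\fa_{i^*}\bigr)$,
which is essentially the one-point variational formula of Theorem~\ref{thm:airyvar}, and then appeals to the GUE upper tail. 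This is a valid and arguably more symmetric route (GUE on both sides), and indeed the constants agree: in the paper's normalization the Airy$_1$ marginal is $4^{-1/3}\chi_{\rm GOE}$, and $\pp(4^{-1/3}\chi_{\rm GOE}\ge s)$ and $\pp(\chi_{\rm GUE}\ge s)$ share the exponent $\tfrac43s^{3/2}$, which is forced by KPZ universality of the upper tail. The one thing your route requires that the paper's does not is a quantitative supremum/modulus-of-continuity bound for $\aip_2$ over a window of width $O(\fa_{i^*}^{1/2})$, which you defer as ``a standard covering''; this does need to be spelled out (a union bound over unit intervals plus a modulus-of-continuity estimate for $\aip_2$ to absorb the between-node fluctuations into $o_2(1)$), whereas the paper's domination argument sidesteps it entirely by needing only a single marginal.
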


\begin{proof} 
Fix $\fx_1,\dotsc,\fx_m\in\rr$ and let $p_m(\fa_1,\ldots, \fa_m)=\pp_{\fh_0}( \fh(\ft, \fx_i) \le \fa_i, i=1,\ldots,m)$.
To see that $p_m(\fa_1,\ldots, \fa_m) \longrightarrow 0$ at the desired speed as any of the $\fa_i$'s goes to $-\infty$ we use the trivial fact that $p_m(\fa_1,\ldots,\fa_m) \le p_1(\fa_i)$ for any $i$. 
By the skew time reversal symmetry and the affine invariance of the fixed point (Thm.\,\ref{thm:sym}(\ref{str},\ref{ai})) together with \eqref{eq:airy2}, we know the one dimensional marginals $p_1(\fa_i)= \pp(\aip_2(\fx) - (\fx-\fx_i)^2 \le -\fh_0(\fx) + \fa_i~~\forall\,\fx\in\rr)$, where $\aip_2(\fx)$ is the Airy$_2$ process (see Sec.\,\ref{sec:airyprocess}) and we have taken $\ft=1$ (general $\ft>0$ follows by scaling invariance).  
Choosing $\bar\fx$ so that $\fh_0(\bar\fx)>-\infty$, we can bound $p_1(\fa_i)$ by  $\pp(\aip_2(\bar\fx) - (\bar\fx-\fx_i)^2 \le -\fh_0(\bar\fx) + \fa_i)$, which is a shifted $F_\text{GUE}$.
Hence we have $p_m(\fa_1,\ldots,\fa_m)\lesssim \exp\{ -\tfrac1{12} |\fa_i|^{3} \}$ as any $\fa_i\to -\infty$, proving the lower bound of Prop.\,\ref{prop:tails}.

To show that $p_m(\fa_1,\ldots, \fa_m) \longrightarrow 1$ at the desired speed as all $\fa_i\to \infty$ one can use \eqref{eq:twosided-ext} together with the estimate $\left|\tts\det(\fI-\fK) - 1\right|\leq\|\fK\|_1e^{\|\fK\|_1+1}$ (with $\|\cdot\|_1$ denoting trace norm, see \eqref{eq:detBd}).
Computing carefully, this gives the desired limit and the upper bound of Prop.\,\ref{prop:tails}.
On the other hand, there is a simple trick using the preservation of max property, Thm.\,\ref{thm:sym}\eqref{pom} (whose proof is independent), which yields the same estimate.
Fix time $\ft=1$ again for simplicity.
Since $\fh_0(\fx)\le \g (1+|x|)$, we have by preservation of max that $\fh(1,\fx) \stackrel{\uptext{dist}}{\le}  \max\{ \fh(1,\fx; \g(1+x)),\fh(1,\fx; \g(1-x))\}$.
By affine invariance, Thm.\,\ref{thm:sym}(\ref{ai}), $\fh(1,\fx; \g(1+x))\stackrel{\uptext{dist}}{=}
 \fh(1,\fx+\tfrac12 \g;0)+ \g(1+x) +\tfrac14\g^2$ and $\fh(1,\fx; \g(1-x))\stackrel{\uptext{dist}}{=} 
 \fh(1,\fx-\tfrac12 \g;0)+ \g(1-x) +\tfrac14\g^2$. So, using \eqref{eq:airy1}, we get 
 $ \pp( \fh(1, \fx_i;\fh_0) \ge \fa_i)\le
\pp( \fh(1,\fx_i+\tfrac12 \g;0)+ \g(1+x_i) +\tfrac14\g^2 \ge  \fa_i) + \pp( \fh(1,\fx_i-\tfrac12 \g;0)+ \g(1-x_i) +\tfrac14\g^2 \ge  \fa_i) = 2-F_\text{GOE}(4^{1/3}(\fa_i-\g(1+x_i)-\tfrac14\g^2)- F_\text{GOE}(4^{1/3}(\fa_i-\g(1-x_i)-\tfrac14\g^2)\gtrsim e^{-\frac43(\ft^{-1/3}\min_i\fa_i)^{3/2}}$, which is what we want.\end{proof}

\begin{rem}{\bf (Replicas and factorization ansatz)}
\enspace An earlier attempt \cite{cqrFixedPt} based on non-rigorous replica methods gave a formula which does not appear to be the same (though there is room for two apparently different Fredholm determinants to coincide).
The replica derivation uses both divergent series and an asymptotic factorization assumption \cite{prolhacSpohn} for the Bethe eigenfunctions of the delta Bose gas.  
The divergent series are regularized through the \emph{Airy trick}, which uses the identity $\int\tsm\d x\Ai(x) e^{nx} = e^{n^3/3}$ to obtain $\sum_{n=0}^\infty (-1)^n e^{n^3/3} \,\ts``\!=\!"\,  \int\tsm\d x\Ai(x) \sum_{n=0}^\infty (-1)^ne^{nx} = \int\tsm\d x\Ai(x)\frac{1}{1+ e^x}$.
Although there is no justification, it is widely accepted in the field that the Airy trick gives consistently correct answers in KPZ.
The factorization assumption, on the other hand, has only been justified by the fact that it has led to the correct result in a few previously known cases.
\end{rem}

\begin{rem}{\bf (Extension in time)}
\enspace TASEP has the unusual property that the initial value problem where we start with $h_0(z)$, and solve for the process $h_t(z)$, $t>0$, can also be done backwards in time.  This is just because the backwards in time dynamics is nothing but the forward in time dynamics for $-h$.  So we can immediately extend the process to $h_t(z)$, $-\infty<t<\infty$.  Of course, for some initial data, such as step $h_0(z) =|z|$,
there will be no movement on $(-\infty,0]$.  The same property is inherited by the KPZ fixed point, except for the \emph{not} technical point that  even if $-\fh_0$ were upper-semicontinuous, it might no longer lie in $\UC$ if it violates the linear growth condition.  For example $\fh_0(\fx) = -\kappa \fx^2$, $\kappa>0$, is good initial data for the KPZ fixed point, but $-\fh_0$ has a finite lifetime $[0,1/\kappa)$ after which it ``explodes" to $+\infty$.  So the initial value problem for the fixed point on $\UC$ has an extension to $(\ft_0,\infty)$, where $\ft_0\le 0$.  The narrow wedge initial data is an
example where $\ft_0=0$.  Continuous $\fh_0$ with $-\fh_0$ satisfying the linear growth condition have $\ft_0=-\infty$.
\end{rem}

\begin{rem}{\bf (Domain Markov property)} 
\enspace {(Suggested by M. Hairer)}
\enspace The KPZ fixed point inherits a stronger space-time Markov property from TASEP, which we describe
informally and without complete proofs.

\noindent First we state the domain Markov property of the space-time TASEP height function
$h_t(z)$.  It is clear from the definition of TASEP that given the height function at $z\in 
\zz$ over some time interval $[t_1,t_2]$, what happens to the height function strictly to the right of $z$ over that time interval is independent of what happens strictly to the left.  
Bootstrapping from this, we see that if $A$ is any connected open subset of 
$ (-\infty,\infty)\times \zz $ which is a finite  union of rectangles $(\ft_1,\ft_2)\times(\fx_1,\fx_2)$ (some of which could be infinite), then $h_t(z)$ has the domain Markov property: If we call the boundary of a subset of the integers those at distance exactly $1$ from the set, then  $\{h_t(z),\,(t,z)\in A\}$ and $\{h_t(z),\,(t,z)\in(A\cup \partial\tsm A)^\mathsf{c}\}$ are independent given $\{h_t(z),\,(t,z)\in\partial\tsm A\}$.

\noindent Now let $\fh(\ft,\fx)$ be the KPZ fixed point on $(\ft_0,\infty)\times \rr$, and let 
$A$ be a connected open subset of this domain with a regular boundary $\partial\tsm A$.  Let $\mathcal{G}_{\partial\tsm A}
= \bigcap_{O\supset \partial\tsm A,\, O\uptext{ open}} \sigma\{ \fh(\ft,\fx), (\ft,\fx)\in O\}$ be the \emph{germ field} of the boundary.  Taking limits from TASEP we see that  $\{\fh(\ft,\fx),\,(\ft,\fx)\in A\}$ and $\{\fh(\ft,\fx),\,(\ft,\fx)\in (A\cup \partial\tsm A)^\mathsf{c}\}$ are independent given $\mathcal{G}_{\partial\tsm A}$.  
 One expects that $\mathcal{G}_{\partial\tsm A}$  actually equals $\sigma(\{ \fh(\ft,\fx), (\ft,\fx)\in \partial\tsm A\})$, but it is not immediately clear how to prove this.

\noindent An important consequence is that the fixed point is not just a Markov process in $\ft$, it is also a Markov process sideways, in $\fx$.
This may partially explain results like the recent two time formulas \cite{johanssonTwoTimeBrownian,johanssonTwoTimeGeometric,baikLiu,johanssonRahman}.
At any rate, it means that while our description of the KPZ fixed point as a Markov process in $\ft$ is the first characterization of the field, it is far from a complete description (see also Rem. \ref{rem:dov}).
\end{rem}

\begin{rem}{\bf (Locality)}  
\enspace There are various notions of locality, the domain Markov 
property above being one; an even stronger statement of locality would follow if we knew the sharp version $\mathcal{G}_{\partial A}=\sigma(\{ \fh(\ft,\fx), (\ft,\fx)\in \partial\tsm A\})$.

\noindent More concretely, one could ask whether
\begin{equation}\label{locality}
\big|\pp_{\fh_0}(\fh(\ft,\fx_i) \le \fa_i, i=1,\ldots,M ) -\pp_{\fh_0^\delta}(\fh(\ft,\fx_i) \le \fa_i, i=1,\ldots,M )\big|=o(\ft)
\end{equation}
 as $\ft\to 0$
whenever $\fh_0^\delta\in\UC$ is such that $\fh^\delta_0(\fy) = \fh_0 (\fy)$ for $|\fy-\fx_i|<\delta$ for one of the $i$.  From the variational formula \eqref{eq:var}, it is fairly straightforward to bound the left hand side of  \eqref{locality} by 
$\exp\{ - C\delta^3/\ft^2 \}$ providing a strong statement of locality.  Presumably this could differentiate between the true fixed point and the non-local stochastic PDE suggested in footnote \ref{jaragonfoot}. The functions have to be in $\UC$; if they are allowed to grow quadratically there are counterexamples.
\end{rem}

\begin{rem}{(\bf Uniqueness and strong KPZ universality conjectures)}
\enspace The KPZ fixed point is expected to be the unique non-trivial (i.e. non-zero) space-time field satisfying locality in the sense of \eqref{locality} and Thm.\,\ref{thm:sym}(\ref{123a},\ref{str},\ref{sis})  (the inviscid limit given by \eqref{eq:invbur} satisfies all but \eqref{str}).

\noindent The \emph{strong KPZ universality conjecture} states that the KPZ fixed point is the limit under the 1:2:3 scaling of all models in the KPZ universality class.
This last statement can alternately be interpreted as the \emph{definition} of the universality class.
Note that it appears to exclude models such as vicious walkers and random matrices, which
have KPZ type fluctuations but seem to  lack a meaningful analogue of a large class of initial conditions.
\end{rem}

From Thm.\,\ref{regep} we obtain

\begin{thm}{\bf (H\"older $\frac12-$ regularity in space)}\label{reg}
\enspace Fix $\ft>0$, $\fh_0\in \UC$, and let $\fh(\ft)$ denote the fixed point at
time $\ft$. 
Then for each $\beta\in (0,1/2)$ and $M<\infty$,
\begin{equation}
\lim_{A\to \infty}  \pp( \| \fh(\ft)\|_{\beta, [-M,M]}\ge A) =0.\label{tight1}
\end{equation}
\end{thm} 

The bounds on the trace norms used to prove Thm. \ref{reg} also yield the local Brownian property for the fixed point (the proof is exactly the same as \cite{quastelRemAiry1}, with $\fK^{\hypo(\fh_0)}_{\ft}$ replacing $B_0$ there).

\begin{thm}{\bf (Local Brownian behavior)}\label{locbr}
\enspace For any $\ft>0$ and any initial condition $\fh_0\in\UC$, $\fh(\ft,\fx)$ is locally Brownian in $\fx$ in the sense\footnote{Since the first version of this article was posted, there has been progress on the stronger statement of absolute continuity with respect to Brownian motion on finite intervals \cite{hammondQuilt,calvertHammondHedge,sarkarVirag}. This uses a different class of approximating models which are shown to converge to the fixed point in \cite{nqr-RBM}.} that for each $\fy\in\rr$, the finite dimensional distributions of $\mathfrak{b}_\ep(\fx)= \ep^{-1/2}(\fh(\ft,\fy + \ep \fx)-\fh(\ft,\fy))$ converge, as $\ep\searrow 0$, to those of a double-sided Brownian motion $\fB$ with diffusion coefficient $2$ and $\fB(0)=0$.
\end{thm} 

By the 1:2:3 scaling invariance, Thm.\,\ref{thm:sym}\eqref{123a}, we have $\fh(\ft,\fx;\fh_0)\stackrel{\uptext{dist}}{=}\ft^{1/3}\fh(1,\ft^{-2/3}\fx;\ft^{-1/3}\fh_0(\ft^{2/3}\fx))$.
Hence the local Brownian behaviour of the fixed point is essentially equivalent to ergodicity.
Recall (see the comment after Thm.\,\ref{thm:sym}) that for any $\rho\in\rr$, drifted Brownian motion $\fB(\fx)+\rho\fx$ is invariant for the fixed point.
The following gives a fairly general condition on initial data in $\UC$ to see $\fB(\fx)+\rho\fx$ locally after a long time:

\begin{thm}{\bf (Ergodicity)}
\enspace 
For any (possibly random) initial condition $\fh_0\in\UC$ such that, for some $\rho\in\rr$, $\ep^{1/2} ( \fh_0(\ep^{-1}\fx) -\rho\ep^{-1}\fx )$ is convergent, in distribution, in $\UC$, the finite dimensional distributions of the process \begin{equation}
\label{eq;resh}\fh(\ft,\fx;\fh_0)-\fh(\ft,0;\fh_0)-\rho\fx\end{equation} converge, as $\ft\to\infty$, to those of a double-sided Brownian motion $\fB$ with diffusion coefficient $2$.
\end{thm}

A similar result was first proved by \citet{pimentelErgod} using coupling, in an article which appeared after the first version of this paper was posted.
The present theorem was added in the second version.  
  
\begin{proof}  By the 1:2:3 scaling and affine invariance properties, Thm.  \ref{thm:sym}(\ref{123a},\ref{ai}), \eqref{eq;resh} is equal in distribution to 
\begin{equation}
\ft^{1/3} \left( \fh(1,\ft^{-2/3}\fx; \ft^{-1/3}(\fh_0(\ft^{2/3}\fx) -\rho\ft^{2/3}\fx))
-\fh(1,0;\ft^{-1/3}(\fh_0(\ft^{2/3}\fx) -\rho\ft^{2/3}\fx))\right).
\end{equation}
Since the initial condition converges in $\UC$, one can repeat the proof of local Brownian behaviour from \cite{quastelRemAiry1}, using now the fact that if $\fh^\ep\to\fh$ in $\UC$
then $\fK^{\hypo(\fh^\ep)}_\ft\longrightarrow \fK^{\hypo(\fh)}_\ft$ in trace norm.
\end{proof}

\subsection{Recovery of the Airy processes}\label{sec:airyprocess}  

Although the determinantal formula \eqref{eq:twosided-ext} used in the definition of the KPZ fixed point looks imposing, we easily recover several of the classical Airy processes\footnote{Besides the ones we treat here, there are three more basic Airy processes $\aip_\text{stat}$, $\aip_{1\to \text{BM}}$ and $\aip_{2\to \text{BM}}$, obtained respectively by starting from a two-sided Brownian motion, a one-sided Brownian motion to the right of the origin and $0$ to the left of the origin, and a one-sided Brownian motion to the right of the origin and $-\infty$ to the left of the origin \cite{imamSasam1,bfsTwoSpeed,baikFerrariPeche,corwinFerrariPeche}.
However, using \eqref{eq:twosided-ext} in these cases involves averaging over the initial randomness and hence verifying directly that the resulting formulas coincide with those in the literature is more challenging.} by starting with special initial data for which the hitting times are explicit, and observing the spatial process at time $\ft=1$.

Start by considering the $\UC$ function $\mathfrak{d}_\fu(\fu) = 0$, $\mathfrak{d}_\fu(\fx) = -\infty$ for $\fx\neq \fu$, known as a {\em narrow wedge at $\fu$}.
It leads to the \emph{Airy$_2$ process} (sometimes simply the \emph{Airy process}):
\begin{align}
\mbox{}\hspace{1.2in}\fh(1,\fx;\mathfrak{d}_\fu)+ (\fx-\fu)^2&~=~ \aip_2(\fx)\qquad\text{(sometimes simply $\aip(\fx)$)}.\label{eq:airy2}\\
\shortintertext{\emph{Flat} initial data $\fh_0\equiv0$, on the other hand, leads to the \emph{Airy$_1$ process}:}
\fh(1,\fx;0)&~=~2^{1/3}\aip_1(2^{-2/3}\fx).\label{eq:airy1}
\shortintertext{Finally the $\UC$ function $\fh_{\text{h-f}}(\fx) = -\infty$ for $\fx<0$, $\fh_{\text{h-f}}(\fx)=0$ for $\fx\geq0$, called {\em  wedge} or {\em half-flat} initial data, leads to the \emph{Airy$_{2\to1}$ process}:}
\fh(1,\fx;\fh_{\text{h-f}})+\fx^2\uno{\fx<0}&= \Bt(\fx).
\end{align}

Formulas for the $m$-point distributions of these special solutions were obtained in the 2000's in \cite{prahoferSpohn,johansson,sasImamPolyHalf,sasamoto,borFerPrahSasam,bfp,bfs} in terms of Fredholm determinants of extended kernels, and later in terms of path-integral kernels in \cite{cqr,quastelRemAiry1,bcr}.  
The Airy$_{2\to1}$ process interpolates between the other two in the limits $\fx\to-\infty$ and $\fx\to\infty$.

We now show how the formula for the Airy$_{2\to1}$ process arises from the KPZ fixed point formula \eqref{eq:twosided-ext}.
The Airy$_1$ and Airy$_2$ processes can be obtained analogously (or in the limits $\fx\to\pm\infty$).
We have to take  $\fh_0(\fx) = -\infty$ for $\fx<0$, $\fh_0(\fx) = 0$ for $\fx\geq0$ in \eqref{eq:twosided-ext}.
It is straightforward to check that $\fT^{\hypo(\fh_0^-)}_{\ft,0}=\bar\P_0\fT_{\ft,0}$, so that $(\fT_{\ft,0}-\fT^{\hypo(\fh_0^-)}_{\ft,0})^*=(\fT_{\ft,0})^*\P_0$.
On the other hand, an application of the reflection principle based on \eqref{eq:asymptTransTransProb} (see \cite[Prop. 3.6]{flat} for the details in the case $\ft=1$) yields that, for $v\geq0$ (using \eqref{eq:Sepi-flipped} and writing $\ftau_0$ for the hitting time of $0$ by $\fB$),
\[\fT^{\hypo(\fh_0^+)}_{\ft,0}(v,u)=\fT^{\epi(-\fh_0^+)}_{-\ft,0}(-v,-u)=\int_0^\infty\!\pp_{-v}(\ftau_0\in\d\fy)\fT_{-\ft,-\fy}(0,-u)=\fT_{\ft,0}(-v,u),\]
which gives
\[\fK^{\hypo(\fh_0)}_{\ft}=\fI-(\fT_{\ft,0})^*\P_0[\fT_{\ft,0}-\varrho\fT_{\ft,0}]=(\fT_{\ft,0})^*(\fI+\varrho)\bar \P_0\fT_{\ft,0},\]
where $\varrho$ is the reflection operator $\varrho f(x)=f(-x)$.
Hence
\[\fK^{\hypo(\fh_0)}_{\ft,\uptext{ext}}(\fx_i,\cdot;\fx_j,\cdot)=-e^{(\fx_j-\fx_i)\p^2}\uno{\fx_i<\fx_j}+(\fT_{\ft,-\fx_i})^*(\fI+\varrho)\bar \P_0\fT_{\ft,\fx_j}.\]
Setting $\ft=1$, we get $\fK^{\hypo(\fh_0)}_{1,\uptext{ext}}(\fx_i,u_i-\fx_i^2\uno{\fx_i\leq0};\fx_j,u_j-\fx_j^2\uno{\fx_j\leq0})=K_{2\to1}(\fx_i,u_i;\fx_j,u_j)$ with $K_{2\to1}$ the extended kernel for the Airy$_{2\to1}$ process, as given in \cite[Eq. 1.8]{qr-airy1to2} (see also \cite{bfs}).
Therefore $\pp\!\left(\fh(1,\fx_i;\fh_0)\leq\fa_i,\,i=1,\dotsc,m\right)=\pp\!\left(\Bt(\fx_i)-\fx_i^2\uno{\fx_i\leq0}\leq \fa_i,\,i=1,\dotsc,m\right)$.

\begin{rem}
Thm.\,\ref{prop:Kfixedptconv} gives a much stronger statement about universality of the Airy processes with respect to initial conditions than was previously known (for one point marginals this appears in \cite{corwinLiuWang}, and to some extent \cite{flat}):
If we start with two rescaled TASEP height functions $\fh_0^{\ep,1}$ and $\fh_0^{\ep,2}$ which converge in distribution in $\UC$ to the same limit $\fh_0$ as $\ep\to0$, then for any $\ft>0$, $\fh^{\ep,1}(\ft,\cdot)$ and $\fh^{\ep,2}(\ft,\cdot)$ have the same (distributional) limit.

\noindent For example, for some fixed $\kappa>0$ one could consider a TASEP initial condition obtained from the periodic case $X_0(i)=2i$, $i\in\zz$, by taking, for each $j\geq1$, the particle at the position $\bar j$ which is closest to $2|j|^{1/\kappa}$, and moving it to $-\bar j-1$, which leads to an initial TASEP height function $h_0(i)\approx-|i|^\kappa$.
If $\kappa<1/2$ then $\fh^\ep(0,\cdot)\longrightarrow0$ as $\ep\to0$ and thus $\fh^\ep(1,\cdot)$ converges to the Airy$_1$ process, while if $\kappa>1/2$ then $\fh^\ep(0,\cdot)\longrightarrow\mathfrak{d}_0$ and $\fh^\ep(1,\cdot)$ converges to the Airy$_2$ process.
A statement such as this appears to have been outside of the scope of previous arguments.
\end{rem}

\subsection{Variational formulas}  

The KPZ fixed point satisfies a version (see \eqref{eq:var} below) of the Hopf-Lax variational formula \eqref{eq:invbur}
with a new noise:

\begin{ex}{\bf (Airy sheet)}
\enspace $\fh(1,\fy; \mathfrak{d}_\fx)+ (\fx-\fy)^2 = \aip(\fx,\fy)$ is called the {\em Airy sheet} (here $\mathfrak{d}_\fx$ is the narrow wedge defined in the last section).
In some contexts it is better to include the parabola, so one writes $\hat\aip(\fx,\fy) = \aip(\fx,\fy)-  (\fx-\fy)^2 $.
Several remarks are in order:
\begin{itemize}[leftmargin=0.8cm]
\item The KPZ fixed point formula does \emph{not} give explicit joint probabilities $\pp(\aip(\fx_i,\fy_i)\le \fa_i, i=1,\ldots,m)$ for the Airy sheet\footnote{The most general formula we can get from the results in Sec.\,\ref{sec:fixedpt} comes from Thm.\,\ref{thm:contstats} and reads $\pp(\hat\aip(\fx,\fy)  \le \ff(\fx)+\fg(\fy),\,\fx,\fy\in\rr)   = \det\!\left(\fI-\fK^{\hypo(-\fg)}_{1/2}\fK^{\epi(\ff)}_{-1/2}\right)$.
Even in the case when $\ff$, $\fg$ take two non-infinite values, it gives a formula for $\pp(\hat\aip(\fx_i,\fy_j) \le \ff(\fx_i)+\fg(\fy_j),\,i,j=1,2)$, but $ \ff(\fx_i)+\fg(\fy_j) $ only span a 3-dimensional linear subspace of $\rr^4$.
So it does \emph{not} determine the joint distribution of $\hat\aip(\fx_i,\fy_j) $, $i,j=1,2$.}, and we presently have no method to obtain them. 
\item Existence of the Airy sheet is obtained in our context from subsequential limits, see Rem. \ref{rem:sheets}.
While our methods leave open the question of uniqueness, this has been proved since the present article was submitted in \cite{dov}, see Rem. \ref{rem:dov}.
Since they start from a different model, one needs to combine their result with \cite{nqr-RBM}.
\item  By stationarity in space, Thm.\,\ref{thm:sym}\eqref{sis}, $\fh(1,\fy; \mathfrak{d}_\fx) \stackrel{\uptext{dist}}{=} 
\fh(1,\fy-\fx; \mathfrak{d}_0)$ and by reflection invariance, Thm.\,\ref{thm:sym}\eqref{ri}, since $\mathfrak{d}_0(-\fx)=\mathfrak{d}_0(\fx)$, $\fh(1,\fy-\fx; \mathfrak{d}_0) \stackrel{\uptext{dist}}{=} 
\fh(1,\fx-\fy; \mathfrak{d}_0)$.  
This gives the \emph{permutation symmetry}
\begin{equation}\label{persym}
\aip(\fx,\fy)\stackrel{\uptext{dist}}{=} \aip(\fy,\fx).
\end{equation}
\item Fixing either variable $\fx$ or $\fy$, $\aip(\fx,\fy)$ is an Airy$_2$ process in the other.
\item The Airy sheet is \emph{stationary}\footnote{Using the methods of this paper one can very easily prove \eqref{aist} in the case $\fx_0=\fy_0$; a proof of the general statement can be found in \cite{dov} (see Rem. \ref{rem:dov}).}: For any fixed $\fx_0,\fy_0$,
\begin{equation}\label{aist}
\aip(\fx+\fx_0,\fy+\fy_0)\stackrel{\uptext{dist}}{=}\aip(\fx,\fy).
\end{equation}
\item New non-obvious distributional symmetries of the Airy sheet have been discovered and conjectured recently (see \cite[Secs. 1.5, 1.6]{borodinGorinWheeler}).
\end{itemize}
\end{ex} 

By repeated application of Thm.\,\ref{thm:sym}\eqref{pom} to initial data which take finite values  $\fh_0(\fx_i)$  at $\fx_i$, $i=1,\ldots, n$, and $-\infty$ everywhere else ), and then taking limits, we obtain

\begin{thm}{\bf (Airy sheet variational formula)}\label{thm:airyvar}  
\enspace For each $\ft>0$,
\begin{equation}\label{eq:var}
\fh(\ft,\fx;\fh_0)  \stackrel{\uptext{dist}}{=}  \sup_{\fy\in\rr}\big\{ \ft^{1/3}\aip(\ft^{-2/3} \fx,\ft^{-2/3} \fy)- \tfrac1{\ft}(\fx-\fy)^2 + \fh_0(\fy)\big\}
\end{equation}
as processes in $\fx$.
In particular, $\aip$ satisfies the \emph{semi-group property}: If $\hat\aip^1$ and $\hat\aip^2$ are independent copies (with parabolas included) and $\ft_1+\ft_2=\ft$ are all positive, then
\begin{equation}\label{sgairy}
 \sup_\fz\left\{ \ft_1^{1/3}\hat\aip^1(\ft_1^{-2/3} \fx,\ft_1^{-2/3} \fz) +  \ft_2^{1/3}\hat\aip^2(\ft_2^{-2/3} \fz,\ft_2^{-2/3} \fy) \right\} \stackrel{\uptext{dist}}{=} \ft^{1/3}\hat\aip^1(\ft^{-2/3} \fx,\ft^{-2/3} \fy).
\end{equation}
\end{thm}

\begin{rem} \label{rem:fullvspoint}
The equalities in distribution \eqref{eq:var} and \eqref{sgairy} hold only for fixed $\ft_1,\ft_2,\ft$, and not as processes in $\ft$.
If $\ft^{1/3}\aip(\ft^{-2/3} \fx,\ft^{-2/3} \fy)$ on the right hand side of \eqref{eq:var} is replaced by $\ft^{1/3}\aip_2(\ft^{-2/3} (\fx- \fy))$, the equality in distribution holds for each fixed $\fx$ and $\ft$, but no longer as processes in $\fx$.
\end{rem} 

\begin{ex}
From \eqref{eq:airy1} and \eqref{eq:var} we deduce that the Airy$_1$ process satisfies
\[2^{1/3}\aip_1(2^{-2/3}\fx)\stackrel{\uptext{dist}}{=}\sup_{\fy\in\rr}\big\{\aip(\fx,\fy)-(\fx-\fy)^2\big\},\]
generalizing the famous identity of \citet{johansson} that the GOE Tracy-Widom distribution can be
written as the sup of the Airy$_2$ process minus a parabola.  The odd factors of $2^{1/3}$ on the left hand side
are the result of a mismatch in natural normalization between the original interpretation from random matrices, and the present one from growth models.
\end{ex}

\begin{rem}\label{rem:sheets}{\bf (Existence of Airy sheets)}
\enspace In TASEP there is a canonical coupling between the process starting from different initial conditions. Take independent Poisson processes of rate $1$, one for each site $x$.  When the Poisson process at $x$ jumps, the TASEP height function jumps down by $2$ if and only if $h(x)$ is a local maximum.  
The coupling just means to use the same background Poisson processes for several different evolving height functions.
It is clear that under such a coupling, the TASEP version of the preservation of max property (Thm.\,\ref{thm:sym}\eqref{pom}) holds.
This seems to have been first exploited by \cite{seppLO}, and leads to the result for the KPZ fixed point in the 1:2:3 limit.

\noindent Indeed, let $\aip^\ep(\fx,\fy)$ denote the 1:2:3 rescaled and recentered (as in \eqref{eq:hep}) TASEP version of the Airy sheet:  
$\aip^\TASEP(x,y) = h(1, y; -|\cdot- x|)$, i.e. the TASEP height function at $y$ at time $1$ starting with packed particles to the left of $x$.
Let $p^\ep_{(\fx_1,\fy_1),\ldots,(\fx_n,\fy_n)}$ denote the joint distribution of $\aip^\ep(\fx_1,\fy_1),\ldots,\aip^\ep(\fx_n,\fy_n)$.  These are a consistent
family of finite dimensional distributions, and the corresponding distributions $\pp^\ep$ of the approximating Airy sheets $\aip^\ep(\fx,\fy)$ are tight in $\mathscr{C}(\rr^2)$, since they satisfy the H\"older bounds \eqref{tight1ep} uniformly in $\ep$, in each variable separately, from the permutation symmetry (which holds at the TASEP level), and therefore in both variables, since the H\"older norm of a function of two variables is easily controlled by the sum of the H\"older norms in each variable.   Any limiting process is called an Airy sheet, and clearly satisfies \eqref{persym},  \eqref{aist}, \eqref{eq:var}, and \eqref{sgairy}\footnote{It is interesting that
although we are unable to prove uniqueness, the variational formulas  \eqref{eq:var}, and \eqref{sgairy} hold  for \emph{any} such limit, especially since the left hand side of \eqref{eq:var} \emph{is} unique.}.

\noindent Either through \eqref{eq:var}, or using a similar construction to the previous paragraph, one produces a basic coupling of the KPZ fixed point starting with different initial data.  So the KPZ fixed point can be thought of as a stochastic flow.  
\end{rem}

\begin{ex}
As a direct consequence of the variational formulas we deduce that if $\fh_\ep(0,\cdot)\longrightarrow\fh_0$ in distribution in $\UC$, then the asymptotic fluctuations of the rescaled TASEP height function $\fh_\ep(1,\fx)$ at a single point $\fx$ (and time $\ft=1$) have the same distribution as $\sup_{\fy\in\rr}\big\{\aip_2(\fx-\fy)-(\fx-\fy)^2+\fh_0(\fy)\big\}$.
This is the result proved in \cite{corwinLiuWang} in the context of discrete time TASEP with sequential update (although their assumptions on the convergence of the initial data are different).
It also coincides, in essence\footnote{The precise connection with the result in \cite{flat} rests on an
assumption which is widely believed to hold, but which currently escapes rigorous treatment (namely that the \emph{partially} asymmetric exclusion process with step initial data converges to the Airy$_2$ process), see Thm. 1.5 in that paper and the discussion preceding it for more details.}, with the result proved in \cite{flat} about the one-point fluctuations for the KPZ equation with general initial data, providing further evidence for the strong KPZ universality conjecture.
\end{ex}

\begin{rem}\label{rem:dov} 
\cite{dov} have recently constructed the \emph{directed landscape} as a (non-explicit) functional of the Airy line ensemble.  This process was earlier called the \emph{space-time Airy sheet} in \cite{cqrFixedPt}.
This nails down the right hand side of the variational formula \eqref{eq:var}, or, alternatively, one can think of the variational formula as defining the KPZ fixed point in terms of the Airy sheet.
Such a definition can be useful to obtain qualitative properties
of the fixed point, though, as we see in this section, many of them do not rely on uniqueness of the Airy sheet, but only its local, Brownian behaviour.
Qualitative properties of the sheet/landscape are obtained in \cite{dov}, but the key point of the present article, the integrability of the fixed point, does not (at the present time) appear to extend to the sheet.
\end{rem}

\subsection{Regularity in time}

We have seen that the fixed point is locally H\"older $\frac12-$ in space, and thus from the 1:2:3 scaling variance (Thm.\,\ref{thm:sym}\eqref{123a}) one expects that it is also locally H\"older $\frac13-$ in time.
This can be proved as an application of the variational formula \eqref{eq:var}; in fact for this purpose one only needs
the pointwise, and not process level, version of the variational formula, so on the right hand side we can replace the Airy sheet by an Airy process (see Remark \ref{rem:fullvspoint}).
To see this, fix $0<\fs<\ft$, $\fx_0\in\rr$, and $\alpha<1/3$, and choose $\beta<1/2$ such that $\beta/(2-\beta)=\alpha$.
We want to compare $\fh(\ft,\fx_0)$ and $\fh_0(\fs,\fx_0)$, but from the Markov property and the fact that at time $\fs$ the process is in $\mathscr C^\beta$, we can assume without loss of generality that $\fs=0$ and $\fh_0\in \mathscr C^\beta$.
There is an $R<\infty$ a.s. such that $|\aip(\fx)|\le R(1+ |\fx|^{\beta})$ and $| \fh_0(\fx)- \fh_0( \fx_0)| \le R( |\fx-\fx_0|^{\beta} + |\fx-\fx_0|)$. 
From the variational formula \eqref{eq:var}, $|\fh(\ft, \fx_0) - \fh(0,\fx_0)|$ is then bounded by
\begin{equation}
\sup_{\fx\in\rr}\Big(R  ( |\fx-\fx_0|^{\beta} + |\fx-\fx_0| + \ft^{1/3} +  \ft^{(1 - 2\beta)/3 }|\fx|^\beta)  - \tfrac1\ft (\fx_0-\fx)^2\Big) \le \tilde{R}\ts \ft^{ {\beta}/{(2-\beta)}}.
\end{equation}
In view of our choice of $\beta$, this yields the desired result: 

\begin{prop}{\bf (H\"older $\frac13-$ regularity in time)}\label{holder}
\enspace For any $0<\alpha<1/3$ and $\fx_0\in\rr$, $\fh(\ft,\fx_0)$ is locally H\"older $\alpha$ in $\ft>0$. 
\end{prop} 

\begin{rem}
One doesn't really expect Prop.\,\ref{holder} to be true at $\ft=0$, unless one starts with H\"older $\frac12-$ initial data, because of the lateral growth mechanism.
For example, we can take $\fh_0(\fx) =\fx^{\beta}\uno{\fx>0}$ with $\beta\in (0,1/2)$ and check using the variational formula that $\fh(\ft,0)-\fh(0,0)\sim \ft^{{\beta}/{(2-\beta)}}$ for small $\ft>0$, which can be much worse than H\"older $1/3-$.
On the other hand, the narrow wedge solution \emph{does} satisfy $\fh(\ft,0;\mathfrak{d}_0)-\fh(0,0;\mathfrak{d}_0)\sim \ft^{1/3}$. At other points $\fh(0,\fx;\mathfrak{d}_0)=-\infty$ while $\fh(\ft,\fx;\mathfrak{d}_0)>-\infty$ so there is not much sense to time continuity at a point.
It should be measured instead in $\UC$, which we leave for future work.
\end{rem}

\subsection{Equilibrium space-time covariance}

White noise plus an arbitrary height shift $\rho\in \rr$ is invariant for the distribution valued spatial derivative process $\mathfrak{u}=\partial_x\fh$ (see the remarks after Thm.\,\ref{thm:sym}) which could be called the \emph{stochastic Burgers fixed point}, since it is expected to be the 1:2:3 scaling limit of the \emph{stochastic Burgers equation} (introduced by \cite{burgers})
\begin{equation}\label{SBE}
\partial_t u = \nu \partial_x u^2 + \lambda\partial_x^2u + \sigma\partial_x\xi
\end{equation}
satisfied by $u=\partial_xh$ from \eqref{KPZ}.
Dynamic renormalization was performed by \cite{forsterNelsonStephen} leading to the dynamic scaling exponent $3/2$.
The equilibrium space-time covariance function was computed in \cite{ferrariSpohnStat}  by taking a limit from TASEP: 
With $\lambda=\nu=1/4$ and $\sigma=1$, and setting $\rho=1/2$,  
\begin{equation}\label{spt}
\ee[ \mathfrak{u}(\ft,\fx)\mathfrak{u}(0,0)] = \tfrac12\ft^{-2/3}g_{\uptext{sc}}''(\ft^{-2/3}\fx),
\end{equation}
where $g_{\uptext{sc}}(w) = \int s^2\d F_w(s)$ with $F_w(s) = \partial_s ( F_\uptext{GUE} (s+w^2) g(s+w^2, w))$, and where
\begin{equation}
g(s,w) = e^{-\frac13 w^3 } \Big[ \int_{\rr^2_-}\d x\,\d y\,e^{w(x+y)}\tsm\Ai(x+y+s) + \langle \hat{\Phi}_{w,s}, (I- K_{\Ai, s}  )^{-1} \hat{\Psi}_{w,s}\rangle_{L^2(\rr_+)}  \Big]
\end{equation}
with $\hat{\Phi}_{w,s}(x)=\int_{\rr_+}\d z\, e^{wz} K_{\Ai,s} (z,x) e^{ws}$, $\hat{\Psi}_{w,s}(y) = \int_{\rr_-}\d z\, e^{wz}\tsm\Ai(y+z+s)$, and $K_{\Ai,s}(x,y)=\int_{\rr_+}\d\lambda \Ai(\lambda+x+s) \Ai(\lambda+y+s) $.  

Since $\mathfrak{u}(\ft,\fx)$ is essentially a white noise in $\fx$ for each fixed $\ft$, one may wonder how the left hand side of \eqref{spt} could even make sense.  
In fact, everything is easily made rigorous: For smooth functions $\varphi$ and $\psi$ with compact support we define $\ee[\langle \varphi, \partial_\fx \fh^{\ep}(\ft,\cdot)\rangle \langle \psi, \partial_\fx \fh^{\ep}(0,\cdot)\rangle ]$ through $\langle \varphi, \partial_\fx \fh^{\ep}(\ft,\cdot)\rangle= -\int\d \fx\,\varphi'(\fx)\fh^{\ep}(\ft,\fx)$.
From our results they converge to $\ee[\langle \varphi, \partial_\fx \fh(\ft,\cdot)\rangle \langle \psi, \partial_\fx \fh(0,\cdot)\rangle ]$.
From \cite[Eqn. (1.10)]{ferrariSpohnStat} they converge to\footnote{The $\frac14$ prefactor comes from a minor correction in the final arXiv version of \cite{ferrariSpohnStat}; we thank Patrik Ferrari and Leandro Pimentel for bringing to our attention the correct scaling in this formula.}
\[\tfrac14\int_{\rr^2}\d\fx\,\d\fy\,\varphi(\tfrac12(\fy+\fx))\psi(\tfrac12(\fy-\fx))\ft^{-2/3}g_{\uptext{sc}}''(\ft^{-2/3}\fx).\]
This gives the equality \eqref{spt} in the sense of distributions.  But since the right hand side is a regular function, the left is as well, and the two sides are equal.

The novelty over \cite{ferrariSpohnStat} is the existence of the stationary Markov process having this space-time covariance.

\appendix

\settocdepth{section}

\section{Trace norm of the fixed point kernel}\label{app:hs}

If $K$ is an integral operator acting on the Hilbert space $\mathcal{H}=L^2(X,\d\mu)$ through its kernel $(Kf)(x) = \int_{X}\d\mu(y)\ts K(x,y)f(y)$, its {\it Fredholm determinant} is defined by
\begin{align}\label{FD}
  \det(I+ K) &= \sum_{n=0}^{\infty}\tr(\Lambda^n(K)) = \sum_{n=0}^{\infty} \frac{1}{n!} \int_{X^n}\d\mu(x_1)\cdots\d\mu(x_n) \det\left[K(x_i,x_j)\right]_{i,j=1}^{n} ,
\end{align}
where $\Lambda^n(K)$ denotes the action of the $n$-fold tensor product $K\otimes \cdots\otimes K$ on the antisymmetric subspace of $\mathcal{H}\otimes \cdots\otimes \mathcal{H}$.
One has $\tr(\Lambda^n(K)) \le \frac{1}{n!}\|K\|_1$,
where $$\|K\|_1=\tr\sqrt{K^*K}$$ is the trace norm, so the Fredholm determinant is finite for trace class operators;
in fact, it is also continuous with respect to the trace norm,
\begin{equation}\label{eq:detBd}
  \big|\tsm\det(I-A)-\det(I-B)\big|\leq \|A-B\|_1e^{1+\|A\|_1+\|B\|_1}.
\end{equation}

While the trace and the Fredholm determinant are invariant under conjugations $K\longmapsto \Gamma^{-1}K\Gamma$, the trace norm is \emph{not}.
So bounds on, and convergence in, trace norm, after appropriate conjugations, will allow us to justify the missing technical steps in Secs.\,\ref{sec:123} and \ref{sec:invariant}.  
(For more background on the Fredholm determinant, including the definition and properties of the Hilbert-Schmidt and trace norms, we refer to \cite{simon} or \cite[Sec.~2]{quastelRem-review}).

\subsection{Proof of Thm. \ref{tm:bst1}}

In this section we prove that the kernel $\fK^{\hypo(\fh_0)}_{\ft,\uptext{ext}}$ in the fixed point formula \eqref{eq:twosided-ext} is trace class (after conjugation by $\vartheta$, defined in \eqref{eq:vartheta}) and depends continuously on the initial data $\fh_0\in\UC$.
The arguments in this section will provide us also with a blueprint for the much harder proof of the fact, to be used crucially in the proof of Prop.\,\ref{prop:Kfixedpthalf}, that the approximating kernels from TASEP are trace class uniformly in the scaling parameter $\ep$ (this is proved in Appx. \ref{app:hs-estimates}).

Our kernels appear in a number of different forms throughout the article.
Since the approximating kernels from TASEP come naturally in the epi form we will prove the result for the epi version; the hypo version will just follow by reflection.
We also have the continuum statistics formula of Thm.\,\ref{thm:contstats}, which is apparently harder than the extended kernel formulas because those are always surrounded by explicit cutoffs $\P_{\fa}$ (in the hypo case; or $\bar\P_\fa$ in the epi case) but this one doesn't seem to have them; in a sense, in Thm.\,\ref{thm:contstats} the second $\fK^{\epi(\fg)}_{-\ft}$ has to act as the cutoff.
To see how this could work, use the definition of $\fK^{\epi(\fg)}_{\ft}$ given in Thm.\,\ref{thm:contstats} and for $f\in L^2(\rr)$ let
$\Gamma f(u)=e^{{G}(u)} f(u)$ where $G$ is antisymmetric, i.e. it produces a cutoff satisfying 
$\Gamma^{-1}\varrho = \varrho\Gamma$ with $\varrho$ the reflection operator $\varrho f(x)=f(-x)$.
Then, after conjugating the kernel by $\Gamma$, we have (using the definition of the epi version of the Brownian scattering transform after \eqref{eq:fpform}, which gives $\fK^{\epi(\fg)}_{-\ft}=(\fK^{\hypo(-\varrho\fg)}_{\ft})^*$)
\begin{equation}\label{eq:magicConjugation}
\begin{split}
\Gamma\fK^{\hypo(\fh)}_{\ft}\fK^{\epi(\fg)}_{-\ft}\Gamma^{-1}&=\big(\Gamma\fK^{\hypo(\fh)}_\ft\Gamma\big)\big(\Gamma^{-1}(\varrho\fK^{\hypo(-\varrho\fg)}_{\ft}\varrho)^*\Gamma^{-1}\big)\\
&=\big(\Gamma\fK^{\hypo(\fh)}_\ft\Gamma\big)\big(\varrho\Gamma\fK^{\hypo(-\varrho\fg)}_{\ft}\Gamma\varrho\big)^*.
\end{split}
\end{equation}
Since the trace class operators form an ideal and $\varrho$ is bounded, it suffices to prove that $\Gamma\fK^{\hypo(\fh)}_{\ft}\Gamma$ is trace class for $\fh\in\UC$.  It will be clear from the argument that the cutoffs  $\P_\fa$ can be replaced by such $\Gamma$ with ${G}(u) = \kappa \sgn(u) |u|^{3/2}$ with a sufficiently small $\kappa>0$, and we will not comment further on this.
  
The form of the kernel for $\fg\in \LC$ can be written explicity using the right hand side of \eqref{eq:trcldecomp2ab},
\begin{multline} \label{eq:trcldecomp2}
\fK^{\epi(\fg)}_{\ft,\uptext{ext}}
= -e^{(\fx_j-\fx_i)\p^2}\uno{\fx_i<\fx_j}+
 ( \fT^{\epi(\fg^{\fx_0,-})}_{-\ft,\fx_0-\fx_i} )^*\fT_{-\ft,-{\fx_0}+\fx_j} \\
  + (\fT_{-\ft,{\fx_0}-\fx_i} )^* \fT^{\epi(\fg^{\fx_0,+})}_{-\ft,-{\fx_0}+\fx_j}  - ( \fT^{\epi(\fg^{\fx_0,-})}_{-\ft,{\fx_0}-\fx_i} )^* \fT^{\epi(\fg^{\fx_0,+})}_{-\ft,-{\fx_0}+\fx_j},
\end{multline}
with ${\fx_0}$ the \emph{splitting point}.
We want to prove that $\vartheta\bar\P_\fa\fK^{\epi(\fg)}_{\ft,\uptext{ext}}\bar\P_\fa\vartheta^{-1}$ is trace class.
We will show that each of the three last terms  is trace class after surrounding by $\bar\P_\fa$.
The argument for the first term using the conjugation by $\vartheta$ is in \cite[Lem. A.2]{bfp}\footnote{Note that there is a typo in the statement of this result, where the ratio corresponding to the $\vartheta_i$'s should be inverted.} and works the same way here.
In the other terms, one can check through the argument that the conjugation by $\vartheta$ does not present any real difficulty, so to make the proof readable we leave them out.  
Note also that, by shifting the height and rescaling $\fh$, we may assume that $\fa=0$ and $\ft=1$.
We will always assume this in the proof in order to make it easier to follow.

The proof uses the classical bound on the Airy functions, $|\tsm\ttsm\Ai(x)|\leq C\tts e^{-2/3(x\vee0)^{3/2}}$, which in our context yields
\begin{equation}\label{eq:Stx-bd}
|\fT_{-1,\fx}(u)|\leq C\tts \exp \hat{F}_0(\fx ,u) 
\end{equation}
with
\begin{equation}\label{eq:Fhat0}
\hat{F}_0(\fx,u) = \fx\fy - \tfrac13\fx^3 -\tfrac23 (\fy \vee 0)^{3/2},\qquad \fy=\fx^2+u.
\end{equation}
By checking various cases, it is elementary to see from \eqref{eq:Stx-bd} that we also have 
\begin{equation}\label{eq:Stx-bd2}
\int_{-\infty}^0\d\eta\, |\fT_{-1,\fx}(u-\eta)|^2 \leq C\exp 2{F}_0(\fx ,u),
\end{equation}
where $F_0=\hat F_0$
unless $\fx>0$ and $u<0$ in which case $F_0=0$ or $\fx=0$ and $u<0$ in which case the bound is $F_0= \log (1+|u|)$.  Note that the constant $C$, and all the bounds here, do depend on $\fx$.

It is enough to control the trace norm of the third term in \eqref{eq:trcldecomp2} since the second term takes the form of a transpose of that one, and the fourth term is the product of two such terms.
Call $\fx_0-\fx_i=\fx_1$ and $-\fx_0+\fx_j=-\fx_2$.
We write our kernel explicitly as
\begin{align}\label{eq:A1}
(\fT_{-1,\fx_1} )^* \fT^{\epi(\fg^{\fx,+})}_{-1,-\fx_2}(z_1,z_2) 
=\int_{\fs\geq0\atop\fb,z\in\rr}\!\d z\,\pp_{\fB(0)=z}(\ftau\in\d\fs,\fB(\ftau)\in\d\fb)\,\fT_{-1,\fx_1}(z,z_1)\fT_{-1,-\fx_2-\fs}(\fb,z_2),
\end{align}
where $\ftau$ is the hitting time of $\epi(\fg^{\fx,+})$.
We can think of the right hand side as an integral of operator kernels in $z_1,z_2$ over some extra parameters $z,\fb$ and $\fs$, and we can estimate its trace norm $\|\cdot\|_1$ by the integral of those trace norms,
\begin{equation}\label{eq:A2}
  \|( \fT_{-1,\fx_1} )^* \fT^{\epi(\fg^{\fx,+})}_{-1,-\fx_2}\|_1
\le \int_{\fs\geq0\atop\fb,z\in\rr}\tsm\d z\,\pp_{\fB(0)=z}(\ftau\in\d\fs,\fB(\ftau)\in\d\fb)\ts\|\fT_{-1,\fx_1}(z,z_1)\fT_{-1,-\fx_2-\fs}(\fb,z_2)\|_1.
\end{equation}
The advantage of the expression on the right hand side is that it isolates very clearly the dependence of the trace norm on the function $\fg$ through the Brownian hitting time and position (see also Rem. \ref{rem:AB}).
Because of the cutoffs $\bar\chi_\fa$, $\fa=0$, the trace norm is computed on $L^2((-\infty,0])$.
The operator inside the norm above is rank one, so its trace norm is now just the product of $L^2$ norms, and using also \eqref{eq:Stx-bd2} we get
\begin{multline}\label{eq:A3}
\|\fT_{-1,\fx_1}(z,z_1)\fT_{-1,-\fx_2-\fs}(\fb,z_2)\|_1 = \sqrt{\int_{-\infty}^0\d z_1\,|\fT_{-1,\fx_1}(z-z_1)|^2
\int_{-\infty}^0\d z_2\,|\fT_{-1,-\fx_2-\fs}(\fb,z_2)|^2}\\
\le C \exp \left\{ {F}_0(\fx_1 ,z)+ {F}_0(-\fx_2-\fs ,\fb)\right\}.
\end{multline}

Now we use our key assumption $\fg(\fx)\ge -\gga - \g |\fx|$.  Observing from $\fx_0$ we have $\fg(\fx_0+\fx) \ge  -\gga - \g |\fx_0+\fx| \ge   -{\tilde{\bm{\alpha}}}- {\tilde{\bm{\gamma}}}\tts|\fx|$ for some new positive constants, which do depend on the $\fx_i$.  So we obtain 
\begin{equation}\label{eq:A4}
\fb\ge  -{\tilde{\bm{\alpha}}}- {\tilde{\bm{\gamma}}}\tts\fs.
\end{equation}
From this it is not hard to see that there are constants $\kappa_1>0$ and $C<\infty$ depending on $\fx_2$ such that 
\begin{align}\label{eq:A5}
{F}_0(-\fx_2-\fs ,\fb)\le -\kappa_1 \fs^3 + C.
\end{align}Furthermore there is a $C<\infty$ depending on $\fx_1$ such that 
\begin{equation}\label{eq:A6}
{F}_0(\fx_1 ,z)\le C + C|z|^{3/2} \uno{z\le -{\tilde{\bm{\alpha}}}} -\tfrac13 |z|^{3/2}
 \uno{z\ge -{\tilde{\bm{\alpha}}}}.
\end{equation}
Let $\fsigma$ be the hitting time of  the epigraph of $-{\tilde{\bm{\alpha}}}-{\tilde{\bm{\alpha}}}|\fx|$ by the Brownian motion  $\fB$. Clearly $\ftau\ge \fsigma$.
We have $\pp_z( \fsigma\le \fs) = \pp_z\big(\tsm\sup_{0\le \fx\le \fs} \fB(\fx) + {\tilde{\bm{\alpha}}}+ {\tilde{\bm{\gamma}}}\fx>0\big)$.
For $z<- {\tilde{\bm{\alpha}}}$ we can bound this by $\pp_z\big(\tsm\sup_{0\le \fx\le \fs} \fB(\fx) + {\tilde{\bm{\alpha}}}+ {\tilde{\bm{\gamma}}}\fs>0\big)$ which can be computed by the reflection principle, to give the bound
\begin{align}\label{}
\pp_z( \ftau\le \fs)\le C\exp\{- \kappa_2 \fs^{-1} (z+{\tilde{\bm{\alpha}}}+{\tilde{\bm{\gamma}}}\fs)^2 \}. 
\end{align}
Putting it all together we have that $\|( \fT_{-1,\fx_1} )^* \fT^{\epi(\fg^{\fx,+})}_{-1,-\fx_2}\|_1$ is bounded by a constant multiple of 
\begin{align}
 \int^{- {\tilde{\bm{\alpha}}}}_{-\infty}\d z \int_0^\infty\d\fs \, e^{ - \kappa_2 \fs^{-1} (z+{\tilde{\bm{\alpha}}}+{\tilde{\bm{\gamma}}}\fs)^2 + C|z|^{3/2} -\kappa_1\fs^3 }
+\int_{- {\tilde{\bm{\alpha}}}}^{\infty}\d z \int_0^\infty\d\fs \, e^{ -\frac13|z|^{3/2} -\kappa_1\fs^3 },
\end{align}
which converge.
This finishes proving $\vartheta\bar\P_\fa\fK^{\epi(\fg)}_{\ft,\uptext{ext}}\bar\P_\fa\vartheta^{-1}$ is trace class.

The method also allows us to show continuity with respect to $\fg\in\LC$ of the above kernel, which yields Thm. \ref{tm:bst1}.
In fact, consider a sequence of functions $(\fg_n)_{n\geq0}$ converging to $\fg$ in $\LC$.
The measures $\pp_{\fB(0)=z}(\ftau^n\in\d\fs,\fB(\ftau^n )\in\d\fb)$, where $\ftau^n$ is the hitting time of $\epi(\fg^n)$, converge to $\pp_{\fB(0)=z}(\ftau\in\d\fs,\fB(\ftau)\in\d\fb)$ as $n\to\infty$, analogously to the last paragraph of the proof of Lem. \ref{lem:KernelLimit1}.
This can be used, together with the above estimates, to show that $\int_{\fs\geq0\atop\fb,z\in\rr}\!\d z\,\pp_{\fB(0)=z}(\ftau^n\in\d\fs,\fB(\ftau^n )\in\d\fb)\,\fT_{-1,\fx_1}(z,z_1)\fT_{-1,-\fx_2-\fs}(\fb,z_2)$ converges in trace norm to $ \int_{\fs\geq0\atop\fb,z\in\rr}\!\d z\,\pp_{\fB(0)=z}(\ftau\in\d\fs,\fB(\ftau)\in\d\fb)\,\fT_{-1,\fx_1}(z,z_1)\fT_{-1,-\fx_2-\fs}(\fb,z_2)$, see Sec. \ref{sec:cvgce} (and in particular Prop. \ref{banach}), where this argument is implemented in the more complicated case of convergence of the TASEP kernels to their fixed point limit.

\begin{rem}\label{rem:AB}
Control of the Fredholm determinant is usually done either by Hadamard's inequality, or through a trace norm estimate.
Many articles in the field skip this step and only prove pointwise convergence of the integral kernel.
Earlier complete arguments were in special cases where all objects in an equation like \eqref{eq:A1} were completely explicit, and Hadamard's inequality is easier to apply.
The trace norm is natural (and yields precise estimates) for Fredholm determinants due to \eqref{eq:detBd}, but has the disadvantage that it is in general difficult to compute.
Usually one tries to write the operator as product $K=AB$, and bound the trace norm by the product of the Hilbert-Schmidt norms of its factors $\|AB\|_1\le \|A\|_2\|B\|_2$, the latter being easy to compute.
In our case the left hand side of \eqref{eq:A1} is obviously a product, but one has to take $A= (\fT_{-1,\fx_1} )^* T^{-1}$, $B=T \fT^{\epi(\fg^{\fx,+})}_{-1,-\fx_2}$ where $T$ is a carefully chosen multiplication operator, depending heavily on the initial data and $\fx$ and $z$ through $\pp_{\fB(0)=z}(\ftau\in\d\fs,\fB(\ftau)\in\d\fb)$.
While writing a proof along these lines, we noticed that it was suggesting that we could just take the trace norm inside the integration as in \eqref{eq:A2}.  It is far from obvious, but true, that the estimate does not give away too much.  Once inside, the trace norm is that of a rank one operator, and easy to compute.  The resulting method, besides working in general, is much easier than earlier proofs.
\end{rem}

\subsection{Proof of Prop. \ref{prop:pathint-fixedpt}}\label{app:fixedpt-pathint}

 In order to check the path integral formula \eqref{eq:twosided-path} we will apply \cite[Thm. 3.3]{bcr} to the extended kernel formula $\det(\fI-\P_{\fa}\wt\fK^{\hypo(\fh_0)}_{\ft,\uptext{ext}}\P_{\fa})$, where $\wt\fK^{\hypo(\fh_0)}_{\ft,\uptext{ext}}$ is the conjugated kernel $\vartheta\ts\fK^{\hypo(\fh_0)}_{\ft,\uptext{ext}}\!\vartheta^{-1}$.
 Here we are using the fact that $\vartheta$ and $\vartheta^{-1}$ commute with $\P_{\fa}$ to see that this determinant is the same as the one in \eqref{eq:twosided-ext}; the conjugation by $\vartheta$ will enable us to check that the analytical assumptions in the \cite{bcr} result are satisfied.
 In the notation of that theorem, we have $Q_{\fx_i}=\P_{\fa_i}$ as well as $\cw_{\fx_i,\fx_j}=\vartheta_i\ts e^{(\fx_j-\fx_i)\p^2}\!\vartheta_j^{-1}$ for $\fx_i<\fx_j$, $K_{\fx_i}=\vartheta_i\ts e^{-\fx_i\p^2}\fK^{\hypo(\fh_0)}_\ft e^{\fx_i\p^2}\!\vartheta_i^{-1}$ and $\cw_{\fx_j,\fx_i}K_{\fx_i}=\vartheta_j\ts e^{-\fx_j\p^2}\fK^{\hypo(\fh_0)}_\ft e^{\fx_i\p^2}\!\vartheta_i^{-1}$ for $\fx_i<\fx_j$; additionally we set $V_{\fx_i}=\fI$, $V_{\fx_i}'=\fI$, $U_{\fx_i}=\Gamma$ and $U_{\fx_i}'=\Gamma^{-1}$ (see \eqref{eq:magicConjugation}).
 Note, however, that in \cite{bcr} the operators $Q_{t_i}$ appear multiplying only on the left of $\fK^{\hypo(\fh_0)}_{\ft,\uptext{ext}}$.
 While we could use the cyclic property of the determinant to remove the second projection $\P_{\fa}$ in our extended kernel formula, it is more convenient to leave it there and note instead that \cite[Thm. 3.3]{bcr} applies in this case just as well, with only a minor modification: Assuming that the operators $Q_{t_i}$ appearing in \cite{bcr} have a square root, then Assumption 1(i) of the theorem 
 is now the boundedness in $L^2(\rr)$ of $Q^{1/2}_{t_i}\cw_{t_i,t_j}Q^{1/2}_{t_j}$ for $i<j$ and of $Q^{1/2}_{t_i}K_{t_i}Q^{1/2}_{t_i}$ and $Q^{1/2}_{t_j}\cw_{t_j,t_i}K_{t_j}Q^{1/2}_{t_j}$ for all $i,j$, and similarly Assumption 3(ii) is the fact that the same operators are trace class when surrounded by $V_{t_i}$ and $V_{t_i}'$ (these modifications are analogous to part of what we do in Appx.\,\ref{app:altBCR}, where we multiply by $N^{1/2}$ on both sides in the left hand side of \eqref{eq:extToBVP}, and their validity can be checked simply by inspecting the proof of Thm.\,\ref{thm:alt-extendedToBVP}).
  
 We turn now to checking that the three assumptions of \cite[Thm. 3.3]{bcr} hold in our setting.
 Assumption 3(ii) (with the modification discussed above) corresponds exactly to the verifying that each of the entries $\P_{\fa}\wt\fK^{\hypo(\fh_0)}_{\ft,\uptext{ext}}\P_{\fa}(\fx_i,\cdot;\fx_j,\cdot)$ of our extended kernel are trace class in $L^2(\rr)$, which is what we just proved above.
 This also yields (the modified) Assumption 1(i), which corresponds to asking only that these operators are bounded.
 Assumption 1(ii), on the other hand, actually does not hold in our setting.
 We note, however, that the assumption is never really used in the proof of \cite[Thm. 3.3]{bcr}.
 In fact, the assumption appears there only because that paper worked in a setting where all Fredholm determinants under consideration involved bounded operators in $L^2(\rr)$, but all that actually matters in the proof is that the operator in that assumption is trace class after an appropriate conjugation, and this is exactly the content of Assumption 3(iii), which can be seen to hold using the above arguments (see the comment after \eqref{eq:magicConjugation}).
 Assumption 3(i) holds trivially.
 Finally, Assumption 2 follows directly from the definition of $\fK^{\hypo(\fh_0)}_\ft$ and the group property of the operators $\fT_{\ft,\fx}$.
 
 Having checked all the assumptions we may now apply the \cite{bcr} result, which yields the path integral formula \eqref{eq:twosided-path} conjugated by $\vartheta_1^{-1}$.

\section{Trace norm convergence of the rescaled TASEP kernels}
\label{app:hs-estimates}

\subsection{Estimates}

In this section we obtain uniform in $\ep$ bounds on the trace norm of the discrete approximations of the fixed point kernel.
We always assume that $\fg^\ep\longrightarrow\fg$ in $\LC$ and, in particular, that they satisfy the linear bound
$\fg^\ep(\fy),\fg(\fy) \ge - \gga - \g |\fy|$
for $\fy\in \rr$, uniformly in $\ep$.

The proof somewhat follows the lines of the continuum version, but there are several new difficulties.
The first is that the continuum proof used many asymptotics of the functions $\fT_{-\ft,\fx}$, each of which has to be done separately now using steepest descent on the contour integrals defining the functions $\fT^{\ep}_{-\ft,\fx}$ and $\bar\fT^{\ep}_{-\ft,\fx}$.
A more serious problem is that we don't have a split formula at the TASEP level, i.e. a formula of the type \eqref{eq:heatkerout2}.
In other words, we don't really have a usable formula for two-sided data for TASEP.
Such a formula appeared in the first version of this article on the arXiv, but it does not seem to be usable, and, in particular, we have not succeeded so far to employ it to control the trace norm of the kernel.
Because of this, and as we discussed in Sec. \ref{sec:123}, we need estimates for the TASEP kernel for the $\LC$ cutoffs of $\fg^\ep$ at $L<\infty$ (see Defn. \ref{def:cutoff}), uniformly in $L$, so that the cutoff can later be removed (through the finite propagation speed result proved in Appx. \ref{app:cutoff}).
The same type of estimates are needed to prove the uniform bounds on the H\"older norms of the fixed point (see Appx. \ref{sec:reg}). More precisely, we need to prove:

\begin{prop}\label{prop:epsilonkernel}
Consider $\fg^\ep$ as above and let $\fg^\ep_L$ denote its $\LC$ cutoff at $L>0$.
Then the trace norm of $(\fT^{\ep}_{-\ft,\fx_1-L})^*\bar\fT^{\ep,\epi((\fg_L^{\ep})^{L,-})}_{-\ft,-\fx_2+L}$
on $L^2((-\infty,\fa])$ is bounded uniformly in $\ep$ and $L$.
\end{prop}

By definition of the notation $\fg^{L,-}$ (see \eqref{eq:defsplith}) we have $(\fg_L^{\ep})^{L,-}(\fy)=(\fg^{\ep})^{L,-}(\fy)$ for all $\fy\geq0$, so in the proposition we could have written $\fg^\ep$ without the cutoff.
We have chosen this formulation to stress the role of $L$ in this result and our interest in it.
We remark that the uniformity in $L$ will not be used in the

From the proof of the continuum case, one can see already that proving this is going to be difficult.
In the proof we first take care of the case $L=0$ and then, after \eqref{shiftything}, extend to all $L>0$, using that crucial identity as the main tool.

From the continuum proof we see that the key point is to bound the trace norm by the integral of trace norms of rank one operators, which become $L^2$ norms.
So we introduce the notation
\begin{align}
\exp 2 F_\ep(\fx,u)=\int_{-\infty}^0\d\eta\,|\fT^{\ep}_{-1,\fx}(u-\eta)|^2 ,\qquad\exp 2\bar F_\ep(\fx,u)=\int_{-\infty}^0 \d\eta\,|\bar\fT^{\ep}_{-1,\fx}(u-\eta)|^2;
\end{align}
here and below we write $\fT^{\ep}_{-1,\fx}(u-z)=\fT^{\ep}_{-1,\fx}(u,z)$ and $\bar\fT^{\ep}_{-1,\fx}(u-z)=\bar\fT^{\ep}_{-1,\fx}(u,z)$.
Going back to \eqref{def:sm} and \eqref{def:sn} to compute these integrals as summations we obtain
\begin{equation}\label{eq:B15}
\int_{-\infty}^0 \d\eta\,|\fT^{\ep}_{ -1,\fx}(u-\eta)|^2= \itwopii{2} \oint_{\tts C^o_\ep}\d\tilde w_1 \oint_{\tts C^o_\ep}\d\tilde w_2\,
\frac{e^{\ep^{-3/2} \sum_{i=1}^2 F(\ep^{1/2} \tilde w_i, \ep^{1/2}\fx_\ep,\ep u_\ep)}}
{\tilde w_1+\tilde w_2-\ep^{1/2}\tilde w_1\tilde w_2}
\end{equation}
with $\fx_\ep =\fx-\ep^{1/2}u/2-\ep /2$  and $u_\ep= u-\ep^{1/2}$ and $F$ defined in \eqref{eq:ft11}, as well as 
\begin{multline}\label{eq:B16}
\int_{-\infty}^0 \d\eta\,|\bar\fT^{\ep}_{-1,\fx} (u-\eta)|^2\\
=\itwopii{2}\oint_{C^o_\ep}\d\tilde w_1\oint_{C^o_\ep}\d\tilde w_2\,\frac{e^{\ep^{-3/2}\sum_{i=1}^2 F(\ep^{1/2}\tilde w_i,\ep^{1/2}\bar\fx_\ep,\ep \bar u_\ep)}(1+\ep^{1/2}\tilde w_1)(1+\ep^{1/2}\tilde w_2)}{\tilde w_1+\tilde w_2+\ep^{1/2}\tilde w_1\tilde w_2},
\end{multline}
with  $\bar\fx_\ep = \fx +\ep^{1/2}u/2 +3\ep /2$ and $\bar u_\ep= u+\ep^{1/2}$; the notation $C_\ep^o$ means that the singularity at $0$ is outside the contour.

The following lemma  replaces \eqref{eq:Stx-bd2} in the discrete case.
The function $\hat{F}_0$ is replaced by  $F(\fw_+,\fx,u)$, which we call
\begin{equation}\label{eq:Fdecomp}
\hat F_\ep(\fx,u)
=\Re\!\big[(1+\nu_1(\ep ^{1/2} \fw_+))( \fx \fy -\tfrac13 \fx^3 -\tfrac23 \fy^{3/2}) + \nu_2(\ep^{1/2} \fw_+) ( \fx\fy  - \tfrac12 \fx^2\fy^{1/2}-\tfrac12\fy^{3/2})\big]
\end{equation}
where $\fw_+= -\fx+\sqrt{\fy}$, as before $\fy = \fx^2 +u$, and
\begin{align}
\nu_1(\fw) &=\fw^{-3}( 3(1+\fw^2)\arctanh \fw +3\fw\log (1-\fw^2)-3\fw)-1,\\
\nu_2(\fw) & =\fw^{-3}(-2(3+\fw^2)\arctanh \fw -4\fw\log (1-\fw^2) +6\fw).
\end{align}
These two functions are analytic in $\fw\in\mathbb{C}-(-\infty,-1]\cup[1,\infty)$, uniformly bounded in absolute value everywhere, vanish at $0$ like $\fw^2 $, and are non-negative on $(-1,1)$, since they have convergent series expansions $\nu_1(\fw) = \sum_{n\ge 2~\uptext{even}} \tfrac{6}{(n+1)(n+2)(n+3)} \fw^{n}$, $\nu_2(\fw)  =\sum_{n\ge 2~\uptext{even}} \tfrac{4n}{(n+1)(n+2)(n+3)} \fw^{n}$ there.
Here and below $\sqrt{\fy}$ always refers to the positive square root.

The following lemma covers different regions in the asymptotics of the functions $F_\ep(\fx,u)$ and $\bar F_\ep(\fx,u)$.  Unfortunately, there does not appear to be one argument which covers all
regions, as we have complicated functions of several variables converging in $\ep$.  On the other hand, we do not need all regions and the estimates we actually need  are far from the optimal ones\footnote{The estimates and arguments which we require in this lemma and in the rest of this section are much more involved than those appearing in earlier proofs of convergence to the classical Airy processes. One reason for this is that, whereas exact contour integral formulas were available in those special cases, our formulas involve expectations over random walk hitting times, and in order to handle them we need to control the behavior of the contour integrals in some additional, complicated regions. But even if this were not a problem, we are in a situation where we need to obtain much finer estimates on our integrals in order to prove the uniform bounds on the H\"older norms of the fixed point, which play a crucial role in our arguments.}. 

\begin{lem}\label{lem:B1}
In the following all constants are independent of everything including $\ep$ unless noted.
\begin{enumerate}[label=\normalfont{(\roman*)}]
\item  \label{item0} Suppose that $\fx_\ep^2+ u_\ep \ge 0$ and $-\fx_\ep+\sqrt{\fx_\ep^2+ u_\ep  }\ge \ep^{-1/2}$. Then $F_\ep(\fx,u) = -\infty$.
\item \label{item3} Suppose that $\fx_\ep^2+ u_\ep \ge 0$.  There is a $\delta>0$ such that for $-\delta\le -\fx_\ep+\sqrt{\fx_\ep^2+ u_\ep  }< \ep^{-1/2}$, and $\ep^{1/2}\fx_\ep \in (1-\sqrt{5},1+\sqrt{5})$ we have $F_\ep(\fx,u)\le \hat F_\ep(\fx_\ep, u_\ep)$. Under the same conditions on $\bar\fx_\ep$ and $\bar u_\ep$,  $\bar F_\ep(\fx,u)\le \hat F_\ep(\bar \fx_\ep, \bar u_\ep)$.
\item  \label{item1} Suppose that $\fx,\fx_\ep\ge 0$. Then $F_\ep(\fx,u) \le 
C\log(2+ |u|+|\fx|)$.
The same holds for $\bar F_\ep(\fx,u)$ under the conditions $\bar\fx_\ep \ge 0$ and $u\ge  -\ep^{-1/2}\fx$.
\item \label{item4} Suppose that $\fx_\ep^2+u_\ep < 0$, $-C\ep^{-1/4}\leq\fx_\ep<0$, and $|u_\ep|\le C\ep^{-1/2}$.  Then there is a $C'$ depending on $|u_\ep|^{1/2}/|\fx_\ep|$ such that for any $\delta>0$, $F_\ep(\fx,u)\le -(\tfrac23 -\delta)|\fx|^3 + C\delta^{-1} |u|^{3/2} +C'$.
\item \label{item5} Suppose that $\bar\fx_\ep^2+\bar u_\ep < 0$, $-C<\bar\fx_\ep<0$, and $|\bar u_\ep|\le C$. Then $\bar F_\ep(\fx,u)\le C$.
\end{enumerate}
\end{lem}

\begin{proof}
Recall that in \eqref{eq:B15} and \eqref{eq:B16} the contour $C^o_\ep$ is $C_\ep$, a circle of radius $\ep^{-1/2}$ centered at $\ep^{-1/2}$, with a little blip taken at its left so that $0$ lies outside the contour.
Recall also that the function $F$ appearing in the exponents there is given by $F(w,x,u)= \arctanh w -w - x\log(1- w^2) - u  \arctanh w$.
We note that the real part of this function is symmetric in the imaginary part of $w$, so in the proof it will be enough to estimate the integrand along the upper half of the contour.

\vskip2pt
\noindent\underline{Proof of \ref{item0}}.
Squaring both sides of $\sqrt{\fx_\ep^2+ u_\ep  }\ge \ep^{-1/2}+\fx_\ep$ and using $\fx_\ep^2+ u_\ep \ge 0$ gives $\alpha_\ep\coloneqq \tfrac12\ep^{-3/2}+\ep^{-1}\fx_\ep-\tfrac12\ep^{-1/2} u_\ep\le 0$.
But looking at the contour integral defining $\fT^\ep_{-1,\fx}(u)$ through \eqref{eq:QRcvgce} and \eqref{def:sm}, we see that the pole at $w=0$ disappears exactly when $\alpha_\ep\leq0$, which shows that the integrand is analytic and thus $\fT^\ep_{-1,\fx}(u)=0$ in this case.

\vskip2pt
\noindent\underline{Proof of \ref{item3}}.
We are trying to estimate \eqref{eq:B15} and \eqref{eq:B16}, and the term in the exponent is the same except in one case evaluated at $\fx_\ep,u_\ep$ and in the other at $\bar\fx_\ep,\bar u_\ep$.
So the proofs will be the same and we just call the value in the exponent $\tilde\fx_\ep,\tilde u_\ep$ to stand for one or the other (we will use this convention also in the proof of the other cases).
We deform the contour $C_\ep$  to a contour $\tilde\fw_+^\ep + r e^{\pm i\pi/4}$, with $r$ going from $0$ until it hits the right arc of the old contour $C_\ep$ (so that $r\in[0,c\tts\ep^{-1/2}]$ with $c\sim \sqrt{2}$), together with that right arc of the old contour $C_\ep$ with angles $\le \pi/4$.  Note that during the deformation we do not pass through any zeros of  the denominator in either  \eqref{eq:B15} or \eqref{eq:B16}.
The contour we have described is a ``steep descent curve" in the sense that it is close enough to the steepest descent curve for our purposes.
In this case, it does actually pass through the critical point $\fw_+^\ep$.
Our estimate is simply the value of the integrand at this point, times a constant estimating the integration along the rest of the curve.
To prove it, we therefore have to show that the rest of the integration is bounded independently of $\ep$.
In particular, we need to show that the real part of the exponent is decreasing along the curve uniformly in $\ep$ as we move away from the critical point.
The computation is not difficult because we have
$\partial_wF= (w-w_+)(w-w_-)(1-w^2)^{-1}$.  We get
$
\partial_{r}\Re[\ep^{-3/2}F(\ep^{1/2}(\tilde\fw_+^\ep+re^{\pm\I\pi/4}),\ep^{1/2}\tilde\fx_\ep,\ep\tilde u_\ep)]=-Kr^2
$
where $K=\sqrt{2}\left( 1-w^2 + \tilde r^2+[4w(w+x)+2\sqrt{2}\tilde r(w+x)]\right)/\left(
 (w^2+\sqrt{2}  \tilde rw-1)^2+2 \tilde r^2w^2+2\sqrt2 \tilde r^3 w+\tilde r^4\right)$ with  $w=\ep^{1/2}\tilde\fw_+^\ep$, $x=\ep^{1/2}\tilde\fx_\ep$, $u=\ep\tilde u_\ep$ and $\tilde r=\ep^{1/2} r$.
We will show that  $K\ge \sqrt{2}/20$.
In the numerator , if $w\ge 0$, the term in square brackets is non-negative by assumption, so we can drop it to get a lower bound.
In  the denominator we can use $( 1-w^2 +\sqrt{2}  \tilde rw)^2 \le 2( 1-w^2)^2 + 4 \tilde r^2 w^2$ and $ \tilde r^3 w \le \tfrac12 \tilde  r^2 + \tfrac12 \tilde r^4w^2$
as well as  $ \tilde r^4 \le 2 \tilde r^2$ to bound it by
$
10( ( 1-w^2)^2 +  \tilde r^2 w^2 +\tilde r^2) 
=
20 ( \tfrac12( 1-w^2)( 1-w^2  -  \tilde r^2)  +\tilde r^2).
$
So we just have to show that 
$
 1-w^2+\tilde r^2 \ge   \tfrac12( 1-w^2)( 1-w^2  -  \tilde r^2)  +\tilde r^2
$,
which, since $w\in[0,1)$, is easily seen to be true.
It is not hard to see that these inequalities remain true for $\tilde\fw_+\ge -\delta$ for some $\delta>0$.

Next we check that the exponent is decreasing along the arc of $C_\ep$ ending at $2\ep^{-1/2}$.  
Using now $\partial_wF= (w^2+2x w -u)(1-w^2)^{-1}$ we have to show that 
$\Re[ ((e^{\I\theta} +1)^2 +2\ep^{1/2}\tilde\fx_\ep (e^{\I\theta} +1) -\ep u ) (1-(e^{\I\theta} +1)^2)^{-1}\I e^{\I\theta}] >0$ for $\theta\in (0,\alpha\tts\pi)$ for some $\alpha\le 1/2$.  The real part is easily computed to be 
$(5 + 4\cos\theta)^{-1} ( 4(\cos\theta +1) + 2\ep^{1/2}\tilde\fx_\ep   +\ep\tilde u)\sin\theta $ 
so we only need to show that $4(\cos\theta +1) + 2\ep^{1/2}\tilde\fx_\ep   +\ep\tilde u_\ep \ge 0$ in this region.
Now $\cos\theta\ge 0$, so this is at least $4 +2\ep^{1/2}\tilde\fx_\ep + \ep\tilde u_\ep$.  We have $\tilde u_\ep\ge -\fx_\ep^2$ so the exponent is decreasing as long as $\tilde\fx_\ep \in (\ep^{-1/2}(1-\sqrt{5}),\ep^{-1/2}(1+\sqrt{5}))$.

\vskip2pt
\noindent\underline{Proof of \ref{item1}}.
The situation now is a little different because we may not be able to move to the critical point without passing through a pole.
On the other hand, we don't really need to because we are not trying to get an optimal estimate.    Instead we deform the contour so that it passes through the real line at $q\coloneqq(1+|\tilde u_\ep|+|\tilde \fx_\ep|)^{-1}\in(0,\ep^{-1/2})$, then move in the vertical direction until we hit the straight line from the proof of \ref{item3} coming at angle  $\pi/4$ out of the critical point, and then continue  until hitting the curve $C_\ep$ as before.
Along the vertical part we have $\partial_{r}\Re[\ep^{-3/2}F(\ep^{1/2}(q+ir,\ep^{1/2}\tilde\fx_\ep,\ep \tilde u_\ep)]= (-2(q+\tilde\fx_\ep)+\mathcal O(\ep)) r
$.  Since $\tilde\fx_\ep\ge 0$ the real part is decreasing along this verticle piece. Along the straight piece at angle  $\pi/4$, the proof of \ref{item3} still works to prove the uniform decrease.  The proof of uniform decrease along  the arc of $C_\ep$ is different for $F_\ep$ $\bar F_\ep$,  and depends on the precise dependence of $u_\ep$, $\fx_\ep$, $\bar u_\ep$ and $\bar\fx_\ep$ on the bare variables $u$ and $\fx$.  
In the first case,
$\Re [\ep^{-3/2}F(\ep^{1/2}  w, \ep^{1/2}\fx_\ep,\ep u_\ep)]=\Re [\ep^{-3/2}(\arctanh \ep^{1/2} w -\ep^{1/2}w)] - \ep^{-1} \fx \Re[\log(1+\ep^{1/2} w)] $ does not even depend on $u$, and decreases uniformly along the arc of $C_\ep$ as long as $\fx\ge 0$.
In the second case, $\Re[ \ep^{-3/2}F(\ep^{1/2}  w, \ep^{1/2}\bar\fx_\ep,\ep\bar u_\ep)]=\Re[ \ep^{-3/2}(\arctanh \ep^{1/2} w -\ep^{1/2}w)]
- \ep^{-1} (\fx +\ep^{1/2} u)\Re[\log(1+\ep^{1/2} w)] $  so we require  
$u>-\ep^{-1/2}\fx$.
Now that we have checked that the exponent is decreasing uniformly along the 
curve, we end up with an estimate in terms of the value of the integrand at $\tilde w_i=(1+|\tilde u_\ep|+|\tilde \fx_\ep|)^{-1}$.  The exponent is bounded and we pick up a term $\log (2+ |\tilde\fx_\ep|+|\tilde u_\ep|)$ from the denominator.

\vskip2pt
\noindent\underline{Proof of \ref{item4} and \ref{item5}}.
The critical points are complex now, and we deform $C_\ep$ to a contour passing through both $\fw_+^\ep$ and $\fw_-^\ep$.
The contour consists of a straight line from $0$ to $\fw_+^\ep$, then a straight line moving out from $\fw_+^\ep$ at angle  $\pi/4$ until it hits $C_\ep$, and then it continues along $C_\ep$ in the usual clockwise direction until it hits the real axis, after which it follows the reflected curve across the real axis, back to $0$.
However, to avoid the singularity at $0$ from the denominator in \eqref{eq:B15} and \eqref{eq:B16}, we
cut off the tip of the curve just to the right of $0$.

The first thing we need to check is that the real part of the exponent is increasing uniformly in $\ep$ along the linear piece between $0$ and $\tilde\fw_+^\ep$.
Using $\partial_wF= (w-w_+)(w-w_-)(1-w^2)^{-1}$ we compute $\partial_r \ep^{-3/2}F(\ep^{1/2}r\tilde\fw_+^\ep,\ep^{1/2}\tilde\fx_\ep,\ep \tilde u_\ep) = (r-1) (\tilde\fw_+^\ep)^2( r\tilde\fw_+^\ep-\tilde\fw_-^\ep)v$ where $v=(1-\ep r^2 (\tilde\fw_+^\ep)^2)^{-1}$.  Here $r\in(0,1)$ so $r-1<0$.
From the assumptions $\tilde\fw_\pm^\ep = R\tts e^{\pm\I\theta}$ with $R>0$ and $\theta \in (0,\pi/2)$ so $(\tilde\fw_+^\ep)^2( r\tilde\fw_+^\ep-\tilde\fw_-^\ep)=R^3( re^{3\I\theta}-e^{-\I\theta})=R'\ttsm e^{\I\theta'}$ with $R'>0$ and $\theta'\in (\pi/2,3\pi/2)$.
Now since $|\tilde\fw_+^\ep|\le C\ep^{-1/4}$, for $\ep$ small enough $\theta' +\arg v \in  (\pi/2,3\pi/2)$ and hence $\Re[(\tilde\fw_+^\ep)^2( r\tilde\fw_+^\ep-\tilde\fw_-^\ep)v]$ is strictly positive, uniformly in $\ep\in[0,\ep_0]$. Note this argument is not uniform in $\theta'$ and hence we end up with a constant which blows up with $|\tilde u_\ep|^{1/2}/|\tilde\fx_\ep|$.

Next we have to check that the real part of the exponent is decreasing uniformly in $\ep$ along the line at angle  $\pi/4$ coming out of $\tilde\fw_+^\ep$, so that this piece of the integral
is bounded uniformly in $\ep$.  Using the formula for $\partial_wF$ above, $\partial_{r}\ep^{-3/2}F(\ep^{1/2}(\tilde\fw_+^\ep+re^{\I\pi/4}),\ep^{1/2}\tilde\fx_\ep,\ep\tilde u_\ep) =  \ep^{-1}\partial_wF  e^{\I\pi/4}\big|_{w=\ep^{1/2}(\tilde\fw_+^\ep+re^{\I\pi/4})}= -r (2|\tilde\fy_\ep|^{1/2}- re^{\I 3\pi/4})  v$ where $\tilde\fy_\ep=\tilde\fx_\ep^2+\tilde u_\ep$ and
$v=( 1- \ep( \fw_+^\ep+re^{\I\pi/4})^2)^{-1}$.  Hence the real part is less than or
 $-Cr|\fy_\ep|^{1/2}$ as long as $\arg v\in (-\pi/4,\pi/2) $, i.e. it is enough that
$\arg ( 1- \ep( \fw_+^\ep+re^{\I\pi/4})^2 )\in (-\pi/2,\pi/4)$, which is true for small enough $\ep$ since   $r$ is less than $\sqrt{2} \ep^{-1/2}$, and $|\fw_+^\ep|\le C\ep^{-1/4}$.
Finally we need to check the exponent is still decreasing as we move along the arc of the curve $C_\ep$, but the proof given in case \ref{item3} above works in the same way here.

Hence we have an estimate $F_\ep(\fx,u) \le \hat F_\ep(\fx_\ep,u_\ep)$.  

To prove \ref{item4} we need to estimate $\hat F_\ep(\fx_\ep,u_\ep)$.
We use \eqref{eq:Fdecomp}.
First of all since $|\fw_+^\ep|\le C\ep^{-1/4}$ one has 
$|\nu_i(\ep^{1/2}\fw_+^\ep)|\le C\ep^{1/2}$.  Furthermore, $ |\fx \fy - \tfrac13 \fx^3 -\tfrac23 \fy^{3/2}|, |\fx\fy  - \tfrac12 \fx^2\sqrt{\fy}-\tfrac12\fy^{3/2}|\le C(|\fx|^3 + |u|^{3/2})$.  Thus 
$\Re\!\big[\nu_1(\ep ^{1/2} \fw_+)( \fx \fy - \tfrac13 \fx^3 -\tfrac23 \fy^{3/2}) + \nu_2(\ep^{1/2} \fw_+) ( \fx\fy  - \tfrac12 \fx^2\sqrt{\fy}-\tfrac12\fy^{3/2})\big]\le C\ep^{1/2} (|\fx|^3 + |u|^{3/2})$.  Here $\fx=\fx_\ep$ and $\fy= \fx_\ep^2 + u_\ep$. 
So it just remains to bound the real part of the term $\fx \fy - \tfrac13 \fx^3 -\tfrac23 \fy^{3/2}$.  Since $\fy<0$, the real part of the
 third term vanishes.  Write the first and second as $\fx_\ep u_\ep + \tfrac23 \fx_\ep^3= \fx_\ep u_\ep - \tfrac23 |\fx_\ep|^3\le  - (\tfrac23-\delta) |\fx|^3 + C\delta^{-1} |u|^{3/2}$, which yields the desired estimate.
 
To prove \ref{item5} it remains to show that $\hat F_\ep(\bar\fx_\ep,\bar u_\ep)\le C$.  By the same argument as above we have $\hat F_\ep(\bar\fx_\ep,\bar u_\ep)\le (1+C\ep^{1/2}) (|\fx_\ep|^3 + |u_\ep|^{3/2})$, which proves it.
\end{proof}

\begin{proof}[Proof of Prop.\,\ref{prop:epsilonkernel}]
The first step is to obtain a bound  when $\fx_1,\fx_2\ge2$ and the cutoff is at $L=0$.
In this case $(\fg^\ep_L)^{L,-}$ becomes simply $(\fg^\ep)^-$, and the operator appearing in the result is given by 
\begin{multline}
(\fT^\ep_{-1,\fx_1} )^* \bar\fT^{\ep,\epi((\fg^\ep)^-)}_{-1,-\fx_2}(z_1,z_2) \\
=\int_{\fb,z\in\rr,\ts\fs\in[0,\ep n)}\!\d z\,\pp_{\fB_\ep(0)=z}(\ftau_\ep\in\d\fs,\fB_\ep(\ftau_\ep)\in\d\fb)\,\fT^\ep_{-1,\fx_1}(z,z_1)\bar\fT^\ep_{-1,-\fx_2-\fs}(\fb,z_2)
\end{multline}
(here $n=\tfrac12\ep^{-3/2}+\ep^{-1}\fx_2+1$ from \eqref{eq:KPZscaling}).
We  think of the right hand side as an integral of operator kernels in $z_1,z_2$ over some extra parameters $z,\fb$ and $\fs$.
We estimate exactly as in \eqref{eq:A1}-\eqref{eq:A3} to see that $\big\|( \fT^\ep_{-1,\fx_1} )^* \bar\fT^{\ep,\epi((\fg^\ep)^-)}_{-1,-\fx_2}\big\|_1$ is bounded by 
\begin{equation}
 \label{secondtermww} 
 \int_{\fb,z\in\rr,\ts \fs\in[0,\ep n)}\d z~~\pp_{\fB_\ep(0)=z}(\ftau_\ep\in\d\fs,\fB_\ep(\ftau_\ep)\in\d\fb)\ts \exp\!\left\{ F_\ep(\fx_{1},z)+\bar F_\ep(-\fx_{2}-\fs,\fb)\right\}.
\end{equation}
It is convenient to recall at this point that, in the context of the above bound, the parameters appearing in \eqref{eq:B15}, \eqref{eq:B16} and Lem. \ref{lem:B1} are given by
\begin{equation}
\fx_\ep =\fx_1-\tfrac12\ep^{1/2}z-\tfrac12\ep, \quad u_\ep= z-\ep^{1/2},\quad \bar\fx_\ep = -\fx_2-\fs +\tfrac12\ep^{1/2}\fb +\tfrac32\ep\qand\bar u_\ep=\fb+\ep^{1/2}.\label{eq:thexueps}
\end{equation}
We remark that $\fx_1$ and $\fx_2$ here are fixed; constants in the estimates below may (and will) depend on them.

Consider first the case $z\geq-1$.
Recalling that $\fx_2\geq2$, one can check that $\fx_\ep^2+u_\ep\geq0$ and that if we let $\bar z=\frac12\ep^{-1}+\ep^{-1}\fx_1$ then $-\fx_\ep+\sqrt{\fx_\ep^2+u_\ep}-\ep^{-1/2}$ is negative for $z\in[-1,\bar z)$ and non-negative for $z\geq\bar z$.
In the first case we may use Lem.\,\ref{lem:B1}\ref{item3} and the fact that for $\fy=\fx_\ep^2+u_\ep \ge 0$ we have that $\nu_1$ and $\nu_2$ are positive and bounded and $\fx\fy  - \tfrac12 \fx^2\sqrt{\fy}-\tfrac12\fy^{3/2}\le 0$ to find a $C<\infty$ such that $\hat F_\ep(\fx_{\ep},z)\le C -\frac13|z|^{3/2}$, while for $z\geq\bar z$ then we may simply use Lem.\,\ref{lem:B1}\ref{item0} to get a much better bound.
On the other hand, when $z<-1$, we use Lem.\,\ref{lem:B1}\ref{item1} to find $C<\infty$ such that $F_\ep(\fx_{1},z) \le C (1+ \log |z|)$.
Therefore we may choose a constant $C>0$ depending on $\gga$ such that
\begin{equation}
 F_\ep(\fx_{1},z)\le C + C\log |z|\uno{z\le -\gga} -\tfrac13|z|^{3/2}\uno{z\ge -\gga}.
\end{equation} 
Note that we got a better bound than \eqref{eq:A6} because we are assuming $\fx_1\ge2$.

Next we deal with the other term inside the exponential in \eqref{secondtermww}.
Assume first that $z\leq\fg(0)$ (so in particular what follows holds also for $z\leq-\gga$).
We claim then that, as in \eqref{eq:A5}, there are $\kappa_1>0$ and $C<\infty$ such that
\begin{equation}\label{thefunnybd}
\bar F_\ep(-\fx_{2}-\fs,\fb)\le -\kappa_1 \fs^3 + C.
\end{equation}
We still have the linear lower bound \eqref{eq:A4}, $\fb\ge  -\gga- \g\tts\fs$.
Note first of all that the random walk simply cannot jump upwards farther than $\ep^{-1/2}\fs$ in time $\fs$ and therefore $\fb\le z+\ep^{-1/2}\fs$. 
Then we have $\bar\fx_{\ep}\leq 0$ for small enough $\ep$ (see \eqref{eq:thexueps}; here we have used  $z\leq\fg(0)$).
Assume furthermore that $\bar\fx_\ep^2 + \fb\ge 0$.
It is easy to check then that, for $\fs<\ep n$ (as we have in \eqref{secondtermww}), the hypotheses of Lem.\,\ref{lem:B1}\ref{item3} are satisfied for small $\ep$, so we have using also \eqref{eq:Fdecomp} that
\begin{equation}\label{thethingiuse}
\bar F_\ep(-\fx_{2}-\fs,\fb)\le \hat F_\ep(\bar\fx_\ep,\bar u_\ep)\leq(1+\nu_1) [\bar\fx_\ep\fb+\tfrac23\bar\fx_\ep^3- \tfrac23(\bar\fx_\ep^2+\fb)^{3/2}]
\end{equation}  
with $\nu_1$ non-negative and bounded (note that the second term in \eqref{eq:Fdecomp} is clearly negative in our case; additionaly, note that in \eqref{eq:Fdecomp} the middle term in the first parenthesis has a $+1/3$ in the middle term, the $-2/3$ here is because we are writing $\fx\fy - \tfrac13\fx^3 = \fx\fu +\tfrac23 \fx^3$).
Using the linear lower bound $\fb\ge  -{\tilde{\bm{\alpha}}}- {\tilde{\bm{\gamma}}}\tts\fs$, and the definition \eqref{eq:thexueps} of $\bar\fx_\ep$, which in our case is negative, \eqref{thefunnybd} follows from \eqref{thethingiuse}. 
The alternative, $\bar\fx_\ep^2+\fb<0$ can only happen for $\fb<0$ and $0\le \fs\le \fs_0$, for a constant $\fs_0<\infty$ depending on $\gga$, $\g$ and $\fx_2$.
Then, since we also have $\fb \ge -\gga- \g\tts\fs_0$, we can use  Lem.\,\ref{lem:B1}\ref{item5} to get $\hat F_\ep(-\fx_{2,\ep}-\fs,\fb)\le C$ and hence, since $\fs\le \fs_0$,  \eqref{thefunnybd}.

Now suppose $z>\fg(0)$.
We can use Lem.\,\ref{lem:B1}\ref{item3} because the condition
$ -\bar\fx_\ep+\sqrt{\bar\fx_\ep^2+ \bar u_\ep  }< \ep^{-1/2}$ reduces in this case to  $\tfrac12\ep^{-3/2} - \ep^{-1}s + \ep^{-1}\fx_2>-1$, which holds because $s<\ep n=\frac12\ep^{-1/2}$.
We get $\bar F_\ep(-\fx_2,z)\leq \hat F_\ep(\bar \fx_\ep, \bar u_\ep)\le C$.
 
Let $\fsigma^\ep$ be the hitting time of  the epigraph of $-\gga-\g |\fx|$ by the random walk $\fB_\ep$. Clearly $\ftau^\ep\ge \fsigma^\ep$.
We have $\pp_z( \fsigma^\ep\le \fs) = \pp_z\big(\sup_{0\le \fx\le \fs} \fB_\ep(\fx) + \gga+\g\fx>0\big)$. $ \fB_\ep(\fx) + \gga+\g\fx$ is a 
submartingale, and by Doob's submartingale inequality, for any $\lambda>0$,
\begin{multline}
\pp_z\Big(\sup_{0\le \fx\le \fs}  \fB_\ep(\fx) + \gga+\g\fx>0\Big)\le \ee_z\!\left[ e^{\lambda( \fB_\ep(\fs)  + \gga+\g\fs)} \right]
= e^{\lambda(z+\gga+\g\fs)+\ep^{-1}\fs \log M(\ep^{1/2} \lambda) }\nonumber
\end{multline}
where $M(\lambda) = e^{\lambda}/(2-e^{-\lambda})$, $\lambda>-\log 2$, is the moment generating function of a centered negative Geom$[\tfrac12]$ random variable.  By choosing $\lambda>0$ carefully, we can find a $\kappa_2 >0$ such that  the right hand side is bounded above
by $\exp\{- \kappa_2 \tts\fs^{-1} (z+\gga+\g\fs)^2 \}$ when $z\leq-\gga$, and therefore for such $z$,
\begin{align}\label{eq:B9a}
\pp_z( \ftau_\ep\le  \fs)\le \exp\{- \kappa_2\ts \fs^{-1} (z+\gga+\g\fs)^2 \}. 
\end{align}Putting it all together we have that $\|( \fT^\ep_{-1,\fx_1} )^* \bar\fT^{\ep,\epi((\fg^\ep)^-)}_{-1,-\fx_2}\|_1$ is bounded by a constant multiple of 
\begin{multline}
 \int^{-\gga}_{-\infty}\d z \int_0^\infty\d s\, e^{ - \kappa_2 \fs^{-1} (z+\gga+\g\fs)^2 + C\log |z| -\kappa_1\fs^3 }\\
+\int_{- \gga}^{\infty}\d z \int_{\fs\geq0,\,\fb\in\rr}\, \pp_{\fB_\ep(0)=z}(\ftau_\ep\in\d\fs,\,\fB_\ep(\ftau_\ep)\in\d\fb) e^{-\frac13|z|^{3/2}+\bar F_\ep(-\fx_2-\fs,\fb)}.
\end{multline}
The first integral clearly converges.
The second one can be split into $z\in[-\gga,\fg(0)]$ and $(\fg(0),\infty)$.
On the first piece the bound \eqref{thefunnybd} still holds, so the integral is clearly finite.
On the second piece we observe that $z>\fg(0)$ forces $\ftau_\ep=0$ and $\fb=z$, so the integral is just $\int_{- \gga}^{\infty} \d z\,e^{-\frac13|z|^{3/2}+\bar F_\ep(-\fx_2,z)}\le C$.

The next step is to extend to  $L>0$.

Suppose that $\ff_\ell$ is the $\LC$ cutoff of some $\ff\in\LC$ to the right of position $\ell$.
Then if $L>\ell$, because the random walk is free for time $L-\ell$ (see \eqref{eq:epishift}), 
\begin{equation}\label{shiftything}
(\fT^{\ep}_{-1,\fx_1-L})^*\bar\fT^{\ep,\epi((\ff_\ell)^{L,-})}_{-1,-\fx_2+L}= (\fT^{\ep}_{-1,\fx_1-\ell})^*\bar\fT^{\ep,\epi(\ff^{\ell,-})}_{-1,-\fx_2+\ell}.
\end{equation}
On the other hand, we can write $(\fT^{\ep}_{-1,\fx_1-L})^*\bar\fT^{\ep,\epi(\ff^{L,-})}_{-1,-\fx_2+L}$ as a telescoping series
\begin{align} \label{eq:alternatingseries}
(\fT^{\ep}_{-1,\fx_1})^*\bar\fT^{\ep,\epi(\ff^{0,-})}_{-1,-\fx_2}
+ \sum_{\ell=1}^L\Big[(\fT^{\ep}_{-1,\fx_1-\ell})^*\bar\fT^{\ep,\epi(\ff^{\ell,-})}_{-1,-\fx_2+\ell }- (\fT^{\ep}_{-1,\fx_1-(\ell-1)})^*\bar\fT^{\ep,\epi(\ff^{\ell-1,-})}_{-1,-\fx_2+\ell-1 }\Big].
\end{align}
We have the following estimate:

\noindent
\begin{lem} \label{balkk}
For each $\delta>0$, there is a $C<\infty$ such that for all $\ell\leq L$
\begin{equation}\label{eq:balkk}
\|(\fT^{\ep}_{-1,\fx_1-\ell})^*\bar\fT^{\ep,\epi((\fg^\ep_L)^{\ell,-})}_{-1,-\fx_2+\ell }- (\fT^{\ep}_{-1,\fx_1-(\ell-1)})^*\bar\fT^{\ep,\epi((\fg^\ep_L)^{\ell-1,-})}_{-1,-\fx_2+\ell-1 }\|_1\le C\tts e^{-(\frac23-\delta) \ell^3}
\end{equation}
\end{lem}

Before proving the lemma we will employ it to finish the proof of Prop. \ref{prop:epsilonkernel}.
Using \eqref{eq:balkk} in \eqref{eq:alternatingseries} we have, for $\fx_1\ge 2,\fx_2\ge 2$,
\begin{equation}
\|\bar\P_0(\fT^{\ep}_{-1,\fx_1-L})^*\bar\fT^{\ep,\epi((\fg^\ep_{L})^{L,-})}_{-1,-\fx_2+L}\bar\P_0\|_1 \le \|\bar\P_0(\fT^{\ep}_{-1,\fx_1})^*\bar\fT^{\ep,\epi((\fg^\ep_L)^{0,-})}_{-1,-\fx_2}\bar\P_0\|_1 + C\sum_{\ell=1}^L e^{-\frac12\ell^3}\le C',
\end{equation}
uniformly in $L$ as needed, using the $L=0$ case of the proposition (note that $(\fg^\ep_L)^{0,-}(\fy)=(\fg^\ep_0)^{0,-}(\fy)$ for all $\fy\geq0$).
The condition $\fx_1,\fx_2\geq2$ can now clearly be dropped using this estimate by taking $L$ larger if needed and shifting $\fg$ slightly.  
\end{proof}

\begin{proof}[Proof of Lem. \ref{balkk}] 
Note first that for all $\ell\leq L$ we have $(\fg^\ep_L)^{\ell,-}(\fy)=(\fg^\ep_\ell)^{\ell,-}(\fy)$ for all $\fy\geq0$.
Using this and \eqref{shiftything} the operator inside the trace norm in \eqref{eq:balkk} can be written as
\begin{equation}\label{eq:trnweneed}
(\fT^{\ep}_{-1,\fx_1-\ell})^*\big[\bar\fT^{\ep,\epi((\fg^\ep_\ell)^{\ell,-})}_{-1,-\fx_2+\ell }-\bar\fT^{\ep,\epi((\fg^\ep_{\ell-1})^{\ell,-})}_{-1,-\fx_2+\ell }\big]
\end{equation} 
We can write the kernel of the operator in brackets at $(z,z_2)$ as
\begin{equation}
\ee_{B^\ep_0=z}\!\left[ \bar\fT^{\ep}_{-1,-\fx_2+\ell-\ftau_\ep} (\fB_\ep(\ftau_\ep) -z_2)\uno{\ftau_\ep<\ep n+\ell} -\bar\fT^{\ep}_{-1,-\fx_2+\ell-\bar \ftau_\ep} (\fB_\ep(\bar \ftau_\ep) -z_2)\uno{\bar \ftau_\ep<\ep n+\ell}\right]
\end{equation}
where $n$ is still $\frac12\ep^{-3/2}+\ep^{-1}\fx_2+1$ (as above) while $\ftau_\ep$ is the hitting time of $\epi((\fg^\ep_\ell)^{\ell,-})$ and $\bar \ftau_\ep$ is the hitting time of $\epi((\fg^\ep_{\ell-1})^{\ell,-})$ by the scaled walk $\fB_\ep(\fx) = \ep^{1/2}\big(B_{\ep^{-1}\fx} + 2\ep^{-1}\fx-1\big)$.
Note that $\bar\ftau_\ep\geq\ftau_\ep$ with equality whenever $\ftau_\ep\geq1$.
So we can put $\uno{\ftau_\ep<1}$ inside the expectation and use the sum instead of the difference in our bounds.
We can then estimate the trace norm of \eqref{eq:trnweneed} (on $L^2((-\infty,0])$) by
\begin{align}\label{traceclassrankone}
&\int p^\ep_z(\d\fs,\d\bar \fs,\d\fb,\d\bar \fb)\d z\,\uno{\fs<1}\| \fT^{\ep}_{ -1,\fx_1-\ell}(z,z_1) \uno{z_1\le0}\bar\fT^{\ep}_{-1,-\fx_2+\ell-\fs} (\fb -z_2)\uno{z_2\le 0}\|_1 \\\label{traceclassrankone'}
&\hspace{0.15in}+\int p^\ep_z(\d\fs,\d\bar \fs,\d\fb,\d\bar \fb)\d z\,\uno{\fs<1,\bar \fs<\ep n+\ell}
\| \fT^{\ep}_{ -1,\fx_1-\ell}(z,z_1) \uno{z_1\le0}  \bar\fT^{\ep}_{-1,-\fx_2+\ell-\bar \fs} (\bar \fb -z_2) \uno{z_2\le 0}\|_1,\qquad\mbox{}
\end{align}
where the trace norms are as operator kernels in $z_1,z_2$ (with $z$ fixed) and $p^\ep_z(\d\s,\d\bar \s,\d\b,\d\bar \b)$ is the probability measure for the diffusively rescaled random walk starting at $z$ to hit the lower curve at $(\s,\b)$ and the upper (cutoff) curve at $(\bar \s,\bar \b)$.
Note again that the centred random walk only takes jumps upwards of at most one step
and therefore $\fB_\ep(\fx) \le z+\ep^{-1/2}\fx$.
Since $(\fg^\ep_{\ell})^{\ell,-}(\fx) \ge -\gga - \g|\ell-\fx|$ we simply cannot have $\ftau_\ep<1$ if $z< -N\coloneqq-\ep^{-1/2} -\gga - \g|\ell-1|$, i.e. the integrations are restricted to $z\ge  -N$.

The operators inside the norms above are rank one, so the trace norms are the product of $L^2$ norms and we get for \eqref{traceclassrankone'} a bound of a constant multiple of
\begin{equation}\label{traceclassrankone'2}
\int p^\ep_z(\d\fs,\d\bar \fs,\d\fb,\d\bar \fb)\d z\,\uno{\fs<1,\bar \fs<\ep n+\ell}  \exp\!\left\{ F_\ep(\fx_{1}-\ell,z)+ \bar F_\ep(-\fx_2+\ell-\bar \fs,\bar\fb)\right\}.
\end{equation}
We will proceed to estimate this one; it will be clear from the proof that the same argument works for \eqref{traceclassrankone} with a few simplifications since $\ftau_\ep,\fb$ are under better control than $\bar\ftau_\ep,\bar\fb$.

Note first of all that, in the same way as in \eqref{eq:B9a}, for $z\leq-\gga-\g\ell$ we have
\begin{equation}\label{thepbd}
p^\ep_z(\fs<1)\le \exp  \{  -\kappa (z+ \gga + \g\ell)^2  \} 
\end{equation}
(and furthermore it simply vanishes if $z<-N$ as we just argued).
We have the lower bound $\bar\b \ge -\gga - \g|\ell-\bar \fs|$, and we claim that this is enough to get a bound 
\begin{equation}\label{eq:B13a}
\bar F_\ep(-\fx_2+\ell-\bar \fs,\bar\fb)\le C(1+ \log \ell)
\end{equation}
 with $C<\infty$ independent of $\ep$ or $\ell$.  
Call  $\bar\fx_\ep=-\fx_2+\ell-\bar\s +\ep^{1/2}\bar\b/2 +3\ep /2 $ and $\bar u_\ep = \bar\b +\ep^{1/2}$.
Usually we have been breaking into cases $\bar\fx^2_\ep + \bar u_\ep \ge 0$ or not, but now let's suppose we have the stronger condition $\tfrac12\bar\fx^2_\ep + \bar u_\ep \ge 0$. Of course this implies $\bar\fy_\ep =\bar\fx_\ep^2 +\bar u_\ep \ge 0$ so $\nu_1,\nu_2\ge 0$.  The form $ \bar\fx_\ep\bar\fy_\ep - \tfrac12 \bar\fx_\ep^2\bar\fy_\ep^{1/2}-\tfrac12\bar\fy_\ep^{3/2}\le 0$ by Young's inequality and the fact that $\bar\fy_\ep\ge 0$  but now the extra condition $\tfrac12\bar\fx^2_\ep + \bar u_\ep \ge 0$ allows us to check the non-obvious fact that
$ \bar\fx_\ep  \bar\fy_\ep -    \tfrac13 \bar\fx_\ep ^3 -\tfrac23\bar\fy_\ep^{3/2}\le 0$ as well.  Therefore under these conditions we have $ \hat F_\ep(\bar\fx_\ep,\bar u_\ep)\le 0$.
Now there exists a $C$ depending on $\fx_2$, $\gga$, $\g$ such that if $\bar\fx_\ep\ge C\ep^{1/2}$ then, since $-\gga-\g\ell$ $\bar u_\ep \ge - \ep^{-1/2} \bar\fx_\ep$ we can use Lem.\,\ref{lem:B1}\ref{item1} to get the bound $C(1+\log\ell)$.  On the other hand, if $\bar\fx_\ep<C\ep^{1/2}$ we estimate $\bar F_\ep(-\fx_2+\ell-\bar \fs,\bar\fb)\le \hat F(\bar\fx_\ep,\bar u_\ep)\le 0$ using the above argument for the last inequality and Lem.\ref{lem:B1}\ref{item3} for the first, which  we are allowed to use because of the condition $\bar\fs\le \ep n+\ell$.

Alternatively we have  $\tfrac12\bar\fx^2_\ep + \bar u_\ep <0$.
Because of the lower bound on $\bar\b$, there is a $C<\infty$, depending only on $\gga$ and $\g$ such that $|\bar\fx_\ep|^3 + |\bar u_\ep|^{3/2} \le C$.
Then we can use Lem.\,\ref{lem:B1}\ref{item1}  when $\bar\fx_\ep\ge 0$, or, when $\bar\fx_\ep< 0$,  Lem.\,\ref{lem:B1}\ref{item5} if  $\bar\fx^2_\ep + \bar u_\ep <0$ or  Lem.\,\ref{lem:B1}\ref{item3} if $\bar\fx^2_\ep + \bar u_\ep \ge0$ to prove \eqref{eq:B13a}.

Now we consider  $ F_\ep(\fx_{1}-\ell,z)$.   We claim that
\begin{equation}\label{eq:theFepbd}
F_\ep(\fx_{1}-\ell,z)\le - (\tfrac23-\delta) \ell^3 + C\delta^{-1} |z|^{3/2}\uno{z\le -\gga-\g\ell}-
  C |z|^{3/2}\uno{z\ge -\gga-\g\ell} +C 
\end{equation}
Let $\fx_\ep=\fx_1-\ell-\ep^{1/2} z/2 - \ep/2$ and $u_\ep = z-\ep^{1/2}$.
Consider first the case $\tfrac12 \fx_\ep^2 + u_\ep \le 0$.
Since $z\ge -\gga - \g\ell -\ep^{-1/2}$, this can only happen if $\ell\le C\ep^{-1/4}$ and we have $ |\fx_\ep|\le C\ep^{-1/4}$, $|u_\ep|\le C\ep^{-1/2}$ and $|u_\ep|^{1/2}/|\fx_\ep|$ bounded so, if $\fx_\ep<0$ we can use Lem.\,\ref{lem:B1}\ref{item4} to get \eqref{eq:theFepbd}.
Suppose on the other hand that 
$\tfrac12 \fx_\ep^2 + u_\ep \ge 0$.  If $-\fx_\ep+\sqrt{\fx_\ep^2+ u_\ep  }\ge \ep^{-1/2}$, by Lem \ref{lem:B1}\ref{item0} there is nothing to estimate.
Otherwise we have $-\fx_\ep\le -\fx_\ep+\sqrt{\fx_\ep^2+ u_\ep  }<\ep^{-1/2}$, which together with the lower bound on $z$ gives $\ep^{1/2}\fx_\ep\in(1-\sqrt{5},1+\sqrt{5})$ independent of $\ell$, so we can use Lem.\,\ref{lem:B1}\ref{item3}  and \eqref{eq:Fdecomp} together with the facts that $\nu_1,\nu_2$ are non-negative and bounded independent of $\ell$  and that $\fx_\ep\fy_\ep  - \tfrac12 \fx_\ep^2\fy_\ep^{1/2}-\tfrac12\fy_\ep^{3/2}\leq0$ to get $F_\ep(\fx_{1}-\ell,z)\le  \fx_\ep u_\ep +   \tfrac23 \fx_\ep^3 -\tfrac23 (\fx_\ep^2+u_\ep)^{3/2}$.
Using $\fx_\ep u_\ep \le \delta |\fx_\ep|^3 + C\delta^{-1} |u_\ep|^{3/2}$ and $z\ge -\gga - \g\ell -\ep^{-1/2}$,  this is bounded above by $- (\tfrac23-\delta) \ell^3 + C\delta^{-1} |z|^{3/2}\uno{z\le -\gga-\g\ell}- C |z|^{3/2}\uno{z\ge -\gga-\g\ell} +C $ for sufficiently small $\ep$.  There is finally the case $\fx_\ep\ge 0$ but $\tfrac12 \fx_\ep^2 + u_\ep < 0$.  Here $\ell$ is bounded and we can estimate by $C\log(|z|+C)$ by  Lem.\,\ref{lem:B1}\ref{item1} which can be absorbed into the right hand side of \eqref{eq:theFepbd}.
This completes the proof of \eqref{eq:theFepbd}.

Now we can use \eqref{thepbd}, \eqref{eq:B13a} and \eqref{eq:theFepbd} to see that
\begin{align} \eqref{traceclassrankone}& \le 
C\tts e^{-(\frac23-\delta) \ell^3}\int\!\d z\, e^{ C\delta^{-1} |z|^{3/2}\uno{z\le -\gga-\g\ell} - C |z|^{3/2}\uno{z\ge -\gga-\g\ell}-\kappa (z+ \gga + \g\ell)^2\uno{ z+ \gga + \g\ell\le 0}} \uno{z\geq-N}.
\end{align}
It is not hard to see that this last term is bounded by $C'e^{- (\frac23-\delta) \ell^3}$, completing the proof.
\end{proof}

\begin{rem}
 Examining the argument, it is not hard to see that one should be able to get away with $\g(\fx) \ge -\gga - \g |\fx|^2$ if $\g$ is sufficiently small, depending on $\ft$.
From the variational formula \eqref{eq:var} it is clear that $\g=1/\ft$ is the physical barrier, but in fact the above argument breaks down at $\g= c/\ft$ with $c\approx 0.9$.
In fact, for $c/\ft <\g<1/\ft$ one has to do a very fine estimate on an oscillatory integral in order to control things.  
We do not pursue it here.
\end{rem}
 
\begin{rem}\label{rem:rwtoBMconjug}
There is still one term not taken care of by the preceeding argument; we need to show that for  $i<j$, $\fx_i>\fx_j$ and $\fa_i,\fa_j$ and the scaling introduced in \eqref{eq:KPZscaling} as well as the scaled variables $z_i=2\ep^{-1}\fx_i+\ep^{-1/2}(u_i+\fa_i)-2$ introduced in Lem. \ref{lem:KernelLimit1},
\begin{equation}
\label{convergence1a}
\Big\|{\vartheta_i( u_i)}\bar\chi_{\fa_i}(u_i)\!\left(\ep^{-1/2}Q^{n_j-n_i}(z_i,z_j)- e^{(\fx_i-\fx_j)\p^2}(u_i,u_j) \right)\!\bar\chi_{\fa_j}(u_j)\vartheta_j^{-1}( u_j) \Big\|_1\xrightarrow[\ep\to0]{}0.
\end{equation}
The pointwise convergence $\ep^{-1/2}Q^{n_j-n_i}(z_i,z_j)\longrightarrow e^{(\fx_i-\fx_j)\p^2}(u_i,u_j)$ is just a standard convergence of random walk transition probabilities
to those of Brownian motion.
To see that with these conjugations the convergence holds in trace norm, we write $\ep^{-1/2}Q^{n}- e^{\fx\p^2}$ as
\begin{equation}
\tfrac12( \ep^{-1/2}Q^{\lfloor \frac{n}2\rfloor}+e^{\frac{\fx}2\p^2})( \ep^{-1/2}Q^{\lceil \frac{n}2\rceil}- e^{\frac{\fx}2\p^2})+\tfrac12( \ep^{-1/2}Q^{\lfloor \frac{n}2\rfloor}-e^{\frac{\fx}2\p^2})( \ep^{-1/2}Q^{\lceil \frac{n}2\rceil}+ e^{\frac{\fx}2\p^2}).
\end{equation}
We can estimate the trace norm of each of the two terms as in the proof of Lem. A.2 of \cite{bfp}: Introducing the factors $\vartheta_i$ and $\vartheta_j^{-1}$ and bounding the trace norm of the product by the product of the Hilbert-Schmidt norms we get, for twice the first term, a bound by the square root of
$
\int_{-\infty}^\infty\d u_i\,\d z(  \ep^{-1/2}Q^{\lfloor \frac{n_j-n_i}2\rfloor}+e^{\frac{\fx_i-\fx_j}2\p^2})^2(u_i-z)\frac{(1+u_i^2)^{4i}}{ (1+z^2)^{2(i+j)}}
\int_{-\infty}^\infty\d u_i\,\d z(  \ep^{-1/2}Q^{\lceil \frac{n_j-n_i}2\rceil}-e^{\frac{\fx_i-\fx_j}2\p^2})^2(z-u_j)\frac{(1+z^2)^{2(i+j)}}{ (1+u_j^2)^{4j}}
$,
which goes to zero as $\ep\searrow0$; the other term is essentially the same.
\end{rem}

\subsection{Convergence}\label{sec:cvgce}

We now explain how the above trace estimates prove the convergence of the TASEP approximations to their continuum versions.
The same argument shows the continuity of the limiting kernels from $\UC$ to the trace class.

We want to show that 
\begin{equation}\label{precvgcethng}
(\fT^\ep_{-1,\fx} )^* \bar\fT^{\ep,\epi(\fg^{\ep})}_{-1,-\fx}\xrightarrow[\ep\to0]{}( \fT_{-1,\fx} )^* \bar\fT^{\epi(\fg)}_{-1,-\fx}
\end{equation}
in trace norm as $\fg^\ep\longrightarrow \fg$ in $\LC[0,\infty)$.
More explicitly, if $\ftau^\ep$ is the hitting time of $\fg^\ep$ by $\fB_\ep$, we want
\begin{multline}
\int_{\fs\in[0,\infty)\atop\fb,z\in\rr}\!\d z\,\pp_{\fB_\ep(0)=z}(\ftau_\ep\in \d\fs,\fB_\ep(\tau_\ep)\in \d\fb)\,\fT^\ep_{-1,\fx}(z,z_1)\bar\fT^\ep_{-1,-\fx-\fs}(\fb,z_2)\\ 
\longrightarrow \int_{\fs\in[0,\infty)\atop\fb,z\in\rr}\!\d z\,\pp_{\fB(0)=z}(\ftau\in \d\fs,\fB(\tau)\in \d\fb)\,\fT_{-1,\fx}(z,z_1)\bar\fT_{-1,-\fx-\fs}(\fb,z_2)\label{convergencething}
\end{multline}
as integral operators in the trace class.  First we claim that
\begin{equation}\label{asmeasures}\pp_{\fB_\ep(0)=z}(\ftau_\ep\in \d\fs,\fB_\ep(\ftau_\ep)\in \d\fb) \longrightarrow \pp_{\fB(0)=z}(\ftau\in \d\fs,\fB(\ftau)\in \d\fb)
\end{equation}
as measures.
By Donsker's invariance principle $\fB_\ep\longrightarrow \fB$ uniformly on compact sets.  By Prop.\,\ref{keyconvoftau} if $z<\fg(0)$  we have \eqref{asmeasures}, and furthermore the convergence is uniform over sets of locally bounded H\"older $\beta$-norm.  On the other hand if $z>\fg(0)$ then, from our assumption, $z>\fg^\ep(0) $ for sufficiently small $\ep$, and then $\ftau_\ep=\ftau=0$, so the convergence as measures also happens for $z>\fg(0)$. 

By the estimates proved in the previous section, one can restrict the integration in \eqref{convergencething} to $\fs,\fb$, and $z$ in compact intervals, the complementary integration being uniformly small in $\ep$ in trace norm.  Since we are now on a compact interval of $z$, we can normalize to make $\mu_\ep = c_\ep\pp_{\fB_\ep(0)=z}(\ftau_\ep\in\d\fs,\fB_\ep(\tau_\ep)\in\d\fb)\d z$ and
$\mu = c\tts\pp_{\fB(0)=z}(\ftau\in\d\fs,\fB(\tau)\in\d\fb)\d z$ probability measures.
Taking $\mathscr{B}$ as the trace class, the convergence is then a consequence of $\|\tsm\int (f_\ep-f) \d\mu_\ep\|\le \int\tsm\|f_\ep-f \|\d\mu_\ep$ and the following fact, which is presumably well known, but whose proof is easier than searching for a reference.

\begin{prop}\label{banach}
Let $\mu_n$, $n=1,2,\ldots$, and $\mu$ be probability measures on a Polish space $\Omega$ with $\mu_n$ converging weakly to $\mu$ as $n\to\infty$.
Let $f$ be a bounded continuous function from $\Omega$ into a Banach space $\mathscr{B}$.
Then $\int\tsm f\ts\d\mu_n\longrightarrow \int\tsm f\ts\d\mu$.
\end{prop}

\begin{proof}
The $\mu_n$ are tight, so, by throwing away a set of uniformly small measure, we can assume $\Omega$ is compact.
By the Skorokhod representation theorem, there exist random variables $X_n$, $n=1,2,\ldots,$ and $X$, distributed according to $\mu_n$ and $\mu$ with $X_n\longrightarrow X$ almost surely.
Let $\ep>0$.
Since $\Omega$ is compact, there is a $\delta>0$ so that $\| f (\omega)- f(\omega')\|< \ep/2$ whenever $d(\omega,\omega')< \delta$.
So $\|\ee[ f(X_n)-f(X) ] \| \le \|f\|_\infty \pp(d(X_n,X) \ge \delta) + \ep/2 <\ep$ for $n$ sufficiently large.
\end{proof}

This proves \eqref{precvgcethng} which, in view of the discussion around \eqref{eq:Klim}, yields Prop. \ref{prop:Kfixedpthalf}, i.e. the convergence of the TASEP finite dimensional distributions to those of the fixed point in the case of one-sided initial data.
The extension two-sided initial data (i.e. Thm. \ref{tm:2}) is done as explained in Sec. \ref{sec:1to2sided}, the main tool being the finite propagation speed result, Lem. \ref{cutofflemma}, proved in the next section.

\section{Finite propagation speed and regularity}\label{sec:finitespeedandreg}

\subsection{Finite propagation speed}\label{app:cutoff}

We will now prove Lem. \ref{cutofflemma}.
First of all, note that by a union bound it suffices to prove the result for $m=1$.
The resulting constants will then depend on $m$.
In reality, the constants are independent of $m$ if all the points $\fx_i$ are bounded above.
But we do not use it anywhere, so we just give the simplest proof which suffices for the results of this article.
For simplicity, we will also set $\ft=1$, and by shifting the initial data we can set $\fa =0$.

Recall that we are given initial data $X_0^\ep$ satisfying \eqref{x0limit} in the $\UC$
topology, which in particular means that we have a fixed $\gga,\g<\infty$ such that 
$\fh^\ep_0(\fy) \le \gga+\g|\fy|$ for all $\ep>0$.  
Then we make the UC cutoff, replacing it by $X_0^{\ep,L}$ in which all the particles with label less than or equal to $- \ep^{-1}L'$ are moved to $\infty$.
The corresponding rescaled height function is denoted by $\fh_0^{\ep,L}$ and we choose $L'\sim L/2$ in $\ep\zz$ so that this function has been replaced by a straight line with slope $-2\ep^{-1/2}$ to the right of $L$  (see Definition \ref{def:cutoff}).  We will prove that the difference of \eqref{eq:TASEPtofp} with $m=1$, that is $\pp\!\left(X_{2\ep^{-3/2}\ft}(\tfrac12\ep^{-3/2}\ft-\ep^{-1}\fx-\tfrac12\ep^{-1/2}\fa+1)>2\ep^{-1}\fx-2\right)$ for some fixed $\fa$, $\fx$, computed with initial data $X_0^{\ep,L-1}$ and initial data $X_0^{\ep,L}$, is less than $Ce^{-cL^3}$.
Then one just sums over integers $L\ge L_0$  to get Lem. \ref{cutofflemma}.

Note that to keep the notation as simple as possible we can assume in this subsection that $\fx=0$, or to make the notation even simpler $\fx=\ep$ because this makes $a=0$ in \eqref{eq:KPZscaling}.
From translation invariance we don't actually lose generality by assuming this.

To bound the difference of probabilities we use the Fredholm determinant formula and \eqref{eq:detBd}, which reduces the problem to estimating the trace norm of $\bar\chi_0(\SM_{-t,-{n}})^*\big( {\SN}_{-t,n}^{\oepi(X^{\ep,L-1}_0)}-{\SN}_{-t,n}^{\oepi(X^{\ep,L}_0)}\big)\bar\chi_0 $ with $t=2\ep^{-3/2}$ and $n=\frac12\ep^{-3/2}$.
It is more convenient to use the cutoff position as a frame of reference so we translate 
the cutoff rescaled height configurations to the left by a macroscopic distance $L$ so that the rightmost (non-infinite) particle of $\theta_{-\ep^{-1}L'}X_0^{\ep,L}$ has label $1$ and $\theta_{-\ep^{-1}L'}X_0^{\ep,L-1}$ is the same configuration, except that about $\ep^{-1}/2$ of the rightmost particles have been moved to $+\infty$.
The shifted macroscopic height functions have been replaced by a line of slope $-2\ep^{-1/2}$ to the right of $0$ in the first case, and to the right of $-1$ in the second case.
To view the system from the same position as before, we have to replace $n$ by $n+\ep^{-1}L'$ and so the problem comes down to bounding the trace norm of $\bar\chi_0(\SM_{-t,-n-\ep^{-1}L'})^*\big( {\SN}_{-t,n+\ep^{-1}L'}^{\oepi(\theta_{-\ep^{-1}L'}X^{\ep,L-1}_0)}-{\SN}_{-t,n+\ep^{-1}L'}^{\oepi(\theta_{-\ep^{-1}L'}X^{\ep,L}_0)}\big)\bar\chi_0 $.
In the language of the rescaled kernels from Lem. \ref{lem:KernelLimit1}, we need to bound the trace norm of 
\begin{equation}\label{eq:trnweneed'}
\bar\chi_0 (\fT^{\ep}_{ -1,-L})^* \big(\bar\fT^{\ep,\epi(-(\theta_{L}\fh_0^{\ep,L-1})^-)}_{-1,L}-\bar\fT^{\ep,\epi(-(\theta_{L}\fh_0^{\ep,L})^-)}_{-1,L}\big)\bar\chi_0,
\end{equation}
but this is exactly Lem. \ref{balkk}.

\begin{rem}\label{rem:airy-var-fps}
At the level of the fixed point, one has immediately from the one point version of the variational formula,
\begin{equation}\label{eq:var2}
\fh(1,0)  \stackrel{\uptext{dist}}{=}  \sup_{\fy\in\rr}\big\{\aip_2(\fy)- \fy^2 + \fh_0(\fy)\big\}
\end{equation} 
with $\aip_2(\fy)$ the Airy process (see Rem. \ref{rem:fullvspoint}), that replacing $ \fh_0$ by the cutoff $ \fh^L_0(\fy) =\fh_0(\fy) - \infty\uno{\fy>L} $,  affects the value of $\fh(1,0)$ only if the supremum is achieved at $\fy>L$.
Since $\fh_0(\fy) \le \gga + \g |\fy|$ this is essentially controlled by the probability that $\aip_2(L)\ge L^2 -\g L -\gga$.
Since $\aip_2(L)$ has the GUE Tracy-Widom distribution, this is roughly $\exp\{-\frac23 L^3\}$.
It is not hard to make this argument rigorous.  
Since TASEP satisfies a microscopic version of the variational formula, one could provide an alternate proof of the finite propagation speed following the same argument.  
It reduces to a large deviation bound for the tail probability in the microscopic analogue of the Airy process.
This can be proved with our formulas, and leads to similar computations as our proof of the cutoff.
We did it via Lem. \ref{balkk} because we need it later as well in the proof of regularity.
\end{rem}

\subsection{Regularity}\label{sec:reg}

Next we obtain the necessary tightness on $\fh^\ep(\ft,\fx)$ by obtaining uniform bounds on the local H\"older norm $\beta<1/2$.  
Note we are working  at $\ft$ fixed and the bounds are as functions of $\fx$, so we will assume in the rest of the section that $\ft=1$; other times can be obtained analogously, or alternatively by scaling.   We start with a well known version of the Kolmogorov continuity 
theorem.

\begin{lem}\label{lem:kolm}  
Let $\fh(\fx)$ be a stochastic process defined for $\fx$ in an interval $[-M,M]\subseteq\rr$, such that for some $p>1$ and $\alpha>0$,
\begin{equation}
\ee\! \left[ \bigl|\fh(\fx)-\fh(\fy)\bigr|^{ p}\right] \le C |\fx-\fy|^{1+\alpha} .\label{eq:hoelder0}
\end{equation}
Then for every $\beta< \alpha/p$ there is a $\bar{C}=\bar{C}(p,\alpha,\beta,C)$  such that for the local H\"older norm defined in \eqref{eq:defHolderNorm},
\begin{equation}\label{holderbd1}
\pp\!\left( \|\fh\|_{\beta, [-M,M]}\ge R \right) \leq \bar CR^{-p}.
\end{equation}
\end{lem}

We want to obtain an estimate like \eqref{eq:hoelder0} for our process, but we have only have access to \emph{cumulative} distribution functions.
We can use the following
\begin{lem} \label{lem:b2} Let $H_1,H_2$ be random variables.  Then, for $p\ge 2$, 
\begin{equation}
\ee\!\left[ |H_1-H_2|^p\uno{H_1>H_2}\right] = p(p-1) \int_{-\infty}^\infty\int_{-\infty}^\infty\d\fa\, \d\fb\,|\fa-\fb|^{p-2}\uno{\fa>\fb} \tts\pp(H_1>\fa,H_2\le \fb).\label{eq:hoelder25}
\end{equation}
\end{lem}\begin{proof}  Since the integrand is positive, the right hand side of \eqref{eq:hoelder25} can be rewritten using Fubini's theorem as
$\ee\!\left[\int_{H_2}^{H_1}\d h_2 \int_{h_2}^{H_1}\d h_1 ~ p(p-1)(h_1-h_2)^{p-2}\uno{H_1>H_2}\right]$.
Performing the integrations inside the expectation gives the left hand side.
\end{proof}

\begin{proof}[Proof of Theorem \ref{reg}]  We want to use the previous lemma, but there's a little problem.  We have  formulas for  $\pp(\fh(\fx) >\fa, \fh(\fy)\le \fb) = \pp( \fh(\fy)\le \fb)-\pp(\fh(\fx) \le \fa, \fh(\fy)\le\fb)$
in terms of differences of Fredholm determinants, which can be estimated by the trace norm of the difference of kernels.  If $\fx$ is close to $\fy$ the difference is small, as desired.
However, it is not straightforward to get the needed decay as $\fa,\fb\to \pm \infty$ (even if one knows the tails $\pp(\fh(\fx) >\fa)$ and $\pp( \fh(\fy)\le \fb)$ decay exponentially, it would still mean one had to control the difference of determinants on a range 
$c\log |\fa-\fb|^{-1}$ for $|\fa-\fb|$ small, which is non-trivial.)
We get around this with a simple trick.  Let $\fh_N(\fx) = (\fh(\fx)\wedge N)\vee (-N)$ be the cutoff of $\fh$ at $\pm N$.
Applying  Lem. \ref{lem:b2} to $H_1=\fh^\ep_N(\fx)$, $H_2=\fh^\ep_N(\fy)$ we have for fixed $\ft$ and $p\ge 2$,
\begin{multline}
\ee\! \left[ \bigl|\fh^\ep_N(\fx)-\fh^\ep_N(\fy)\bigr|^{ p}\right] \\
= p(p-1) \int_{-N}^N \int_{-N}^N \d\fa\,\d\fb\,|\fa-\fb|^{p-2} \bigl[F^\ep_\fx(\fa)\uno{\fa<\fb}+F^\ep_\fx(\fb)\uno{\fa\ge\fb} - F^\ep_{\fx, \fy}(\fa,\fb)\bigr],\label{eq:hoelder}
\end{multline}
where $F^\ep$ are the one and two point cumulative distribution functions of the 1:2:3 rescaled TASEP height functions.
Now suppose we find some $p>1$, $\alpha>0$ and $C=C(N)$ so that the right hand side is bounded by $C |\fx-\fy|^{1+\alpha}$
independent of $\ep>0$.  Suppose also that,
\begin{equation}\label{supbound}
\limsup_{N\to\infty}\limsup_{\ep\to0}\pp\bigg( \sup_{\fx\in [-M,M]}|\fh^\ep(\fx)|\ge N\bigg)=0.
\end{equation}
Then we have \eqref{tight1} because if $\sup_{\fx\in [-M,M]}|\fh^\ep(\fx)|\le N$, $\fh^\ep_N=\fh^\ep$ on $[-M,M]$.

To see that \eqref{supbound} holds, we note that we have assumed that $\fh^\ep(0,\fx) \le C(1+|\fx|)$.
We can also assume without loss of generality that $\fh^\ep(0, \fx_\ep) \ge \ell>-\infty$  independent of $\ep$ for some $\fx_\ep\longrightarrow\fx_0$, since the $\fh^\ep$ have been assumed to converge to some $\fh$ in $\UC$, and we can assume that $\fh(\fx_0)>-\infty$ for some $\fx_0$.
Therefore we can bound $\fh^\ep(1,\fx)$ above by the maximum of two rescaled TASEP height functions, one starting with $C(1+x)$ and one starting with $C(1-x)$.  For each we can estimate the probability that the height profile is  greater than $N$ anywhere on $[-M,M]$ by cutting the initial data at $L$, using the exact formulas (e.g. from Ex. \ref{example210}), and showing the bound does not depend on $L$. This is fairly standard, so we omit the details.
This shows that $\limsup_{N\to\infty}\limsup_{\ep\to0}\pp\!\left( \sup_{\fx\in [-M,M]}\fh^\ep(\fx)\ge N\right)=0$.  For the other direction, we bound $\fh^\ep(\ft, \fx)$ below by the narrow wedge solution $\underline{\fh}^\ep(\ft, \fx)$ centered at $\fx_\ep$. 
By results of Johansson \cite{johansson}, $\underline{\fh}^\ep(1, \fx) +(\fx-\fx_\ep)^2$ converges uniformly on compact sets to the (rescaled) Airy$_2$ process.
In particular, $\limsup_{N\to\infty}\limsup_{\ep\to 0}\pp\!\left( \sup_{\fx\in [-M,M]}\fh^\ep(\fx)\le - N\right)=0$. 
 
This reduces the problem to showing that for  $\fx_1<\fx_2$, 
\begin{equation}\label{eq:neededestimate}
\int_{-N\le \fa_1<\fa_2\le N}\d\fa_1\,\d\fa_2\,|\fa_1-\fa_2|^{p-2} \bigl[F^\ep_{\fx_1}(\fa_1) - F^\ep_{\fx_1, \fx_2}(\fa_1,\fa_2)\bigr]\le C(N)  |\fx_1-\fx_2|^{1+\alpha}.
\end{equation}
It may appear this is not general enough because $\fx_1<\fx_2$ and $\fa_1<\fa_2$, but our proof will apply equally well to the process with spatially flipped initial data $\fh_0(-\fx)$, which restores the symmetry.
There is one last issue which is that we only have nice formulas for these cumulative distributions when there is a rightmost particle.
So we move all the particles to the right of $\ep^{-1}L$ to $\infty$ giving a cutoff height function $\fh_0^{\ep,L}$ and corresponding one and two point distribution functions $F^{\ep,L}_\fx, F^{\ep,L}_{\fx, \fy}$ and we need the constant $C(N)$ in \eqref{eq:neededestimate} to be independent of $L$.

From \eqref{eq:TASEPtofp}, $F^{\ep,L}_{\fx_1,\fx_2}(\fa_1,\fa_2)=\pp_{X_0^{\ep,L}}\!\left(X_{2\ep^{-3/2}\ft}(n_i)>2\ep^{-1}\fx_i-2,\;i=1,2\right)$ with $n_i=\tfrac12\ep^{-3/2}\ft-\ep^{-1}\fx_i-\tfrac12\ep^{-1/2}\fa_i+1$, and where  $X_0^{\ep,L}$ is the cutoff TASEP initial data as in Def. \ref{def:cutoff}.
By translation invariance \eqref{eq:transInv} this is the same as
\begin{align}
&\pp_{\wt X_0^{\ep,L}}\!\left(X_{2\ep^{-3/2}\ft}(\tilde n_i)>2\ep^{-1}\fx_i-2,\;i=1,2\right)\\
&\hspace{0.5in}=\det\!\left(\tsm I-(\SM_{-t,-\tilde n_1})^*\SN^{\epi(\wt X_0^{\ep,L})}_{-t,\tilde n_1}\big(I-Q^{n_2-n_1}\P_{2\ep^{-1}\fx_2-2}Q^{n_1-n_2}\P_{2\ep^{-1}\fx_1-2}\big)\right)\\
&\hspace{0.5in}=\det\!\left(\tsm I-(\SM_{-t,-\tilde n_1})^*\SN^{\epi(\wt X_0^{\ep,L})}_{-t,\tilde n_1}\big(Q^{n_2-n_1}\bar\P_{2\ep^{-1}\fx_2-2}Q^{n_1-n_2}\P_{2\ep^{-1}\fx_1-2}+\bar\P_{2\ep^{-1}\fx_1-2}\big)\right),
\end{align}
where $\wt X_0^{\ep,L}=\theta_{-\ep^{-1}L}X^{\ep,L}_0$, $\tilde n_i=n_i+\ep^{-1}L$, and the first equality is from Thm.\,\ref{thm:tasepformulas} and \eqref{eq:path-int-kernel-TASEPgem} (note $n_1>n_2$ for small $\ep$).
Since $\SN^{\epi(\wt X_0^{\ep,L})}_{-t,\tilde n_1}Q^{n_2-n_1}=\SN^{\epi(\wt X_0^{\ep,L})}_{-t,\tilde n_2}$, scaling the variables in the determinant as in Sec.\,\ref{sec:one-sided} and noting that the scaled version of $\tilde X_0^{\ep,L}$ converges to $\theta_{L}\fh^L_0(\fx)$ yields 
$
F^{\ep,L}_{\fx_1,\fx_2}(\fa_1,\fa_2)=\det\big(\fI-\fA_{\fx_1,\fa_1;\fx_2,\fa_2}\big),
$
where
\[\fA_{\fx_1,\fa_1;\fx_2,\fa_2}=\bar\P_{-\fa_1}\Big[(\fT^{\ep}_{-\ft,\fx_1-L})^*\bar\fT^{\ep,\epi(-(\theta_{L}\fh_0)^-)}_{-\ft,-\fx_2+L}\bar\P_{-\fa_2}\wt Q^\ep_{\fx_2-\fx_1}\P_{-\fa_1}+(\fT^{\ep}_{-\ft,\fx_1-L})^*\bar\fT^{\ep,\epi(-(\theta_{L}\fh_0)^-)}_{-\ft,-\fx_1+L}\bar\P_{-\fa_1}\big]\]
with $\tilde Q^\ep_{\fx_2-\fx_1}(u_2,u_1)=\ep^{-1/2}Q^{n_1-n_2}(z_2,z_1)$, $z_i= 2\ep^{-1}\fx_i+\ep^{-1/2}(u_i+\fa_i)-2$. Note $\tilde Q^\ep_{\fx_2-\fx_1}(u_2,u_1)= \tilde Q^\ep_{\fx_2-\fx_1}(u_2-u_1)$ are transition densities of a centred, diffusively rescaled geometric random walk $\fB_\ep$.  

We need to estimate $F^{\ep,L}_{\fx_1}(\fa_1)-F^{\ep,L}_{\fx_1,\fx_2}(\fa_1,\fa_2)$, which by \eqref{eq:detBd} is controlled by the trace norm 
\[\big\|\fA_{\fx_1,\fa_1;\fx_1,\fa_1}-\fA_{\fx_1,\fa_1;\fx_2,\fa_2}\big\|_1=\big\|\bar\P_{-\fa_1}(\fT^{\ep}_{-\ft,\fx_1-L})^*\bar\fT^{\ep,\epi(-(\theta_{L}\fh_0)^-)}_{-\ft,-\fx_2+L}\bar\P_{-\fa_2}\wt Q^\ep_{\fx_2-\fx_1}\P_{-\fa_1}\big\|_1.\]
From Prop. \ref{prop:epsilonkernel} we have $\big\|\bar\P_{-\fa_1}(\fT^{\ep}_{-\ft,\fx_1-L})^*\bar\fT^{\ep,\epi(-(\theta_{L}\fh_0)^-)}_{-\ft,-\fx_2+L}\bar\P_{-\fa_2}\big\|_1 \le C$. Since $-\fa_2<-\fa_1$,  we also have by Schwartz's inequality,
\begin{equation}\label{heatkernelestimate}
\big\|\bar\P_{-\fa_2}\wt Q^\ep_{\fx_2-\fx_1}\P_{-\fa_1}\big\|_\uptext{op} \le  \int_{|y|\ge|\fa_2-\fa_1|} \wt Q^\ep_{\fx_2-\fx_1}(y)\d y,
\end{equation}
where $\|\cdot\|_\uptext{op}$ is the operator norm and we are using $\|AB\|_1\le \|A\|_\uptext{op}\|B\|_1$.  
When we plug this estimate into the left hand side of \eqref{eq:neededestimate}, we get $2N \ee\left[ |\fB_\ep( \fx_2-\fx_1)|^{p-1}\right]$.
The $2N$ comes from the integral over $\fa_1$ and we lose a power in the exponent because of the estimate in \eqref{heatkernelestimate} which is by a transition probability instead of the cumulative functions.
At any rate, we have $2N \ee\left[ |\fB_\ep( \fx_2-\fx_1)|^{p-1}\right]\le C(N,p) |\fx_2-\fx_1|^{\frac{p-1}2}$.
So we have proved \eqref{eq:neededestimate} with $\alpha = \frac{p-3}2$, which means that we have \eqref{holderbd1} with $\beta = \frac12 - \frac3{2p}$, for any $p\ge 2$ (though with a constant
$\bar{C}$ getting worse as $p\to \infty$.) \end{proof}

\section{Path integral formulas}

\settocdepth{section}

\subsection{An alternative version of [BCR15, Thm. 3.3]}\label{app:altBCR}

Consider a measure space $(X,\mu)$ and fix $t_1<\dotsm<t_n$.
In \cite[Sec. 3]{bcr} a very general setting is described on which the Fredholm determinant of certain \emph{extended kernels}, acting on $L^2(\{t_1,\dotsc,t_n\}\times X)$, can be turned into the Fredholm determinant of what they call a \emph{path integral kernel}, which acts on $L^2(X)$.
The result proved in that paper can be applied to a variety of processes, and in particular yields the path integral formula for the KPZ fixed point given in Prop.\,\ref{prop:pathint-fixedpt}.
However, the TASEP extended kernel does not fit into the setting of that paper.
The purpose of this subsection is thus to provide a suitable version of \cite[Thm. 3.3]{bcr}.
We do not strive for the greatest possible generality, but instead state a relatively simple variation of \cite{bcr} which is enough to obtain the TASEP path integral formula, which in turn is proved in the next subsection.

Let $\cm(X)$ be the space of real-valued measurable functions on $X$.
We are given a collection of integral operators on subspaces $\cD_\cw^+,\cR_\cw^+,\cD_\cw^-,\cR_\cw^-,\cD_K,\cR_K$ of $\cm(X)$:
\renewcommand{\labelitemi}{$\vcenter{\hbox{\tiny$\bullet$}}$}
\begin{itemize}
\item $\cw_{t_i,t_j}\!:\cD_\cw^+\longrightarrow\cR_\cw^+$, for $1\leq i<j\leq n$.
\item $\cw_{t_i,t_j}\!:\cD_\cw^-\longrightarrow\cR_\cw^-$, for $1\leq j\leq i\leq n$.
\item $K_{t_i}\!:\cD_K\longrightarrow\cR_K$, for $1\leq i\leq n$, with $\cD_K,\cR_K$ satisfying\\[-16pt] 
\begin{multline}
\hskip0.5in\cD_K,\cR_K\subseteq\cD_\cw^+\cap\cD_\cw^-,\qquad\cw_{t_i,t_j}(\cD_K)\subseteq\cD_\cw^+\cap\cD_\cw^-\quad\uptext{for all $i,j$},\\
\uptext{and}\qquad\cw_{t_i,t_j}(\cD_K)\subseteq\cD_K\quad\uptext{for all $i>j$}.\hskip0.5in\label{eq:inclusions}
\end{multline}
\end{itemize}
\renewcommand{\labelitemi}{\textendash}
Given these operators we construct an integral operator $K^\uptext{ext}$ acting on the space $\cD_K^{(t_1,\dotsc,t_n)}$ of functions $f\!:\{t_1,\dotsc,t_n\}\times X\longrightarrow\rr$ such that $f(t_i,\cdot)\in\cD_K$ for $1\leq i\leq n$ through the following integral kernel:
\begin{equation}\label{eq:generalExt}
  K^\uptext{ext}(t_i,x;t_j,y)=
  \begin{dcases*}
    \cw_{t_i,t_j}K_{t_j}(x,y) & if $i\geq j$,\\
    -\cw_{t_i,t_j}(I-K_{t_j})(x,y) & if $i<j$
  \end{dcases*}
\end{equation}
(that is, for $f\in \cD_K^{(t_1,\dotsc,t_n)}$ we set $K^\uptext{ext}f(t_i,x)=\sum_{j=1}^n\int_X\d\mu(y)\,K^\uptext{ext}(t_i,x;t_j,y)f(t_j,y)$).
Our assumptions on the domains and ranges of our operators ensure that these (and later) compositions are always well defined.

We will suppose that our operators satisfy the following algebraic assumptions: for all $f\in\cD_K$,
\begin{subequations}\label{eq:altBCR}
\begin{align}
\cw_{t_i,t_i}f&=f& &\uptext{for all $1\leq i\leq n$},
\label{eq:altBCR1}\\
\cw_{t_i,t_j}\cw_{t_j,t_k}f&=\cw_{t_i,t_k}f& &\uptext{for all $1\leq i,j,k\leq n$},
\label{eq:altBCR2}\\
\hskip1in\cw_{t_i,t_j}K_{t_j}\cw_{t_j,t_i}f&=K_{t_i}f& &\uptext{for all $1\leq i<j\leq n$}.\hskip1in
\label{eq:altBCR3}
\end{align}
\end{subequations}
These three assumptions are our replacement for Assumption 2 of \cite{bcr}.
There are two main differences with that paper: First, we are assuming here that the $\cw_{t_i,t_j}$'s are properly defined operators for $i>j$, and that $\cw_{t_i,t_j}$ and $\cw_{t_j,t_i}$ are essentially inverses of each other; and second, the reversibility relation $\cw_{t_i,t_j}K_{t_j}=K_{t_i}\cw_{t_i,t_j}$ which is assumed in \cite{bcr} is replaced here by \eqref{eq:altBCR3}\footnote{It is possible to state a version of Thm.\,\ref{thm:alt-extendedToBVP} where $\cw_{t_j,t_i}$ is not necessarily well defined for $i<j$, but instead one assumes that $\cw_{t_j,t_i}K_{t_i}$ and $K_{t_j}\cw_{t_j,t_i}$ are well defined and satisfy the necessary algebraic assumptions.}.

Additionally, we consider multiplication operators $\Ml_{t_i}$ acting on $\cm(X)$ as $\Ml_{t_i}f(x)=\varphi_{t_i}(x)f(x)$ for some $\varphi_{t_i}\in\cm(X)$.
We define $N$ to be the diagonal operator acting on functions $f\!:\{t_1,\dotsc,t_n\}\!\times\!X\longrightarrow\rr$ as $Nf(t_i,\cdot)=\Ml_{t_i}f(t_i,\cdot)$.
We also introduce the notation
\[\oM_{t_i}=I-\Ml_{t_i}.\]
We need to make some additional assumptions on our operators (the first one is convenient in light of our setting in the next subsection, but could be relaxed as in \cite{bcr}; the last four replace Assumptions 1 and 3 in \cite[Thm. 3.3]{bcr}):
\begin{enumerate}[label=(\roman*)]
\item $\varphi_{t_i}(x)\geq0$ for all $x\in X$.
We let $\Ml_{t_i}^{1/2}$ denote the operator of multiplication by $\varphi_{t_i}(x)^{1/2}$ and define $N^{1/2}$ in the same way.
\item For all $1\leq i\leq n$, $N_{t_i}^{1/2}$ maps $L^2(X)$ to $\cD_K$.
\item For all $1\leq i,j\leq n$, $N_{t_i}^{1/2}$ maps $\cw_{t_i,t_j}(\cR_K)$ to $L^2(X)$, and if additionally $i\geq j$ then it also maps $\cw_{t_i,t_j}(\cD_K)$ to $L^2(X)$.
\item $\cR_K\cup\cD_\cw^+\subseteq L^2(X)$.
\item There exist multiplication operators $V_{t_i}$, $V_{t_i}'$ satisfying $V_{t_i}'V_{t_i}N_{t_i}^{1/2}=N_{t_i}^{1/2}$ and such that for all $i<j$ the operator $V_{t_i}N_{t_i}^{1/2}\cw_{t_i,t_j}N_{t_j}^{1/2}V_{t_j}'$ is trace class in $L^2(X)$ and for all $i,j$ the operators $V_{t_i}N_{t_i}^{1/2}\cw_{t_i,t_j}K_{t_j}N_{t_j}^{1/2}V_{t_j}'$ are trace  class in $L^2(X)$.
\item There exist multiplication operators $U_{t_i}$, $U_{t_i}'$ satisfying $U_{t_i}U_{t_i}'K_{t_i}\cw_{t_i,t_j}=K_{t_i}\cw_{t_i,t_j}$ for all $j\leq i$ and such that the operator 
\[\mbox{}\qquad\quad U_{t_i}\!\left[K_{t_i}\cw_{t_{i},t_{i+1}}\oM_{t_{i+1}}
    \dotsm\cw_{{t_{n-1}},{t_{n}}}\oM_{{t_{n}}}K_{t_n}-K_{t_i}\cw_{t_{i},{t_1}}\oM_{{t_1}}\cw_{{t_1},{t_2}}\oM_{{t_2}}\dotsm\cw_{{t_{n-1}},{t_{n}}}\oM_{{t_{n}}}\right]\!U_{t_i}'\]
 is trace class in $L^2(X)$.
\end{enumerate}

\begin{theorem}\label{thm:alt-extendedToBVP}
  In the above setting, assume that \eqref{eq:inclusions}, \eqref{eq:altBCR} and assumptions (i)-(vi) are satisfied.
  Then
  \begin{multline}\label{eq:extToBVP}
    \det\!\big(I-\Ml^{1/2} K^\uptext{ext}\Ml^{1/2}\big)_{L^2(\{t_1,\dots,t_n\}\times X)}\\
    =\det\!\big(I-K_{t_n}+K_{t_n}\cw_{t_n,t_1}\oM_{t_1}\cw_{t_1,t_2}\oM_{t_2}\dotsm\cw_{t_{n-1},t_n}\oM_{t_n}\big)_{L^2(X)}.
  \end{multline}
\end{theorem}

Note that by (ii) above, the operator inside the determinant on the left hand side acts on $L^2(\{t_1,\dots,t_n\}\times X)$.
Similarly, by (iii) above and the fact that
\begin{multline}
K_{t_n}-K_{t_n}\cw_{t_n,t_1}\oM_{t_1}\cw_{t_1,t_2}\oM_{t_2}\dotsm\cw_{t_{n-1},t_n}\oM_{t_n}\\
=\sum_{j=1}^n\sum_{k=0}^{n-j}(-1)^{k}\quad\smashoperator{\sum_{j=\ell_0<\ell_1<\dots<\ell_k\leq
        n}}\quad K_n\cw_{n,j}\Ml_{j}\cw_{{j},{{\ell_1}}}\Ml_{{\ell_1}}\cw_{{\ell_1},{\ell_2}}
    \Ml_{{\ell_{k-1}}}\cw_{{\ell_{k-1}},{\ell_k}}\Ml_{{\ell_k}}\cw_{{\ell_k},{n}}
\end{multline}
(see \eqref{eq:tswti1} below), the operator inside the determinant on the right hand side acts on $L^2(X)$.
Moreover, thanks to (iv) and (v), after conjugating appropriately by the operators $V_{t_i}$, $V_{t_i}'$ on the left and $U_{t_n}$, $U_{t_n}'$ on the right, which does not change the value of the Fredholm determinants, the operators on both sides become trace class, which shows that the two Fredholm determinants are finite.

\begin{proof}[Proof of Theorem \ref{thm:alt-extendedToBVP}]
  The proof is a minor adaptation of the arguments in \cite[Thm. 3.3]{bcr}, and we will use throughout it all the notation and conventions of that paper.
  We will just sketch the proof, skipping some of the technical details.
  In particular, we will completely omit the need to conjugate by the operators $U_{t_i}$ and $V_{t_i}$, since this aspect of the proof can be adapted straightforwardly from \cite{bcr} (such conjugations are used to justify the operations involving the multiplicativity and the cyclic property of the Fredholm determinant).

  In order to simplify notation throughout the proof we will replace subscripts of the form $t_i$ by $i$, so for example $\cw_{i,j}=\cw_{t_i,t_j}$.
  Let $\sK=\Ml^{1/2}K^\uptext{ext}\Ml^{1/2}$. Then $\sK$ can
  be written as
  \begin{equation}\label{eq:sK}
    \sK=\sQ^{1/2}(\sW^{-}\sK^\uptext{d}+\sW^{+}(\sK^\uptext{d}-\sI))\sQ^{1/2}\quad\text{with}\quad
    \sK^\uptext{d}_{ij}=K_{i}\uno{i=j},\quad \sQ_{i,j}^{1/2}=\Ml_{{i}}^{1/2}\uno{i=j},
  \end{equation}
  where $\sW^{-}$, $\sW^{+}$ are lower triangular, respectively strictly upper triangular, and defined by
  \begin{equation*}
    \sW^{-}_{ij} = \cw_{i,j}\uno{i\geq j},\qquad
    \sW^{+}_{ij}=\cw_{{i},{j}}\uno{i < j}.
  \end{equation*}
  
  The key to the proof in \cite{bcr} was to observe that $\big[(\sI+\sW^{+})^{-1})\big]_{i,j}=I\uno{j=i}-\cw_{{i},{i+1}}\uno{j=i+1}$, which then implies that $\big[(\sW^{-}+\sW^{+})\sK^\uptext{d}(\sI+\sW^{+})^{-1}\big]_{i,j}=\cw_{{i},{1}}K_{1}\uno{j=1}$.
  The fact that only the first column of this matrix has non-zero entries is what ultimately allows one to turn the Fredholm determinant of an extended kernel into one of a kernel acting on $L^2(X)$.
  However, the derivation of this last identity uses $\cw_{i,{j-1}}K_{j-1}\cw_{{j-1},j}=\cw_{i,j}K_{j}$, which is a consequence of Assumptions 2(ii) and 2(iii) in \cite{bcr}, and thus is not available to us. In our case we may proceed similarly by observing that
  \begin{equation}\label{eq:IT+}
    \big[(\sW^{-})^{-1})\big]_{i,j}=I\uno{j=i}-\cw_{{i},{i-1}}\uno{j=i-1},
  \end{equation}
  as can be checked directly using \eqref{eq:altBCR1} and \eqref{eq:altBCR2}.
  Now using the identity $\cw_{i,{j+1}}K_{j+1}\cw_{{j+1},j}=\cw_{i,j}K_{j}$ (which follows from our assumption \eqref{eq:altBCR3} together with \eqref{eq:altBCR2}) we get
  \begin{equation}
      \big[(\sW^{-}+\sW^{+})\sK^\uptext{d}(\sW^{-})^{-1}\big]_{i,j}
      =\cw_{i,j}K_{j}-\cw_{i,{j+1}}K_{j+1}\cw_{{j+1},j}\uno{j<n}
      =\cw_{{i},{n}}K_{n}\uno{j=n}.\label{eq:T-T+}
  \end{equation}
  Note that now only the last column of this matrix has non-zero entries, which accounts for the different expression on the right hand side of \eqref{eq:extToBVP} when compared to \cite{bcr}. 
  To take advantage of \eqref{eq:T-T+} we write $\sI-\sK=(\sI+\sQ^{1/2}\sW^+\sQ^{1/2})\big[\sI-(\sI+\sQ^{1/2}\sW^+\sQ^{1/2})^{-1}\sQ^{1/2}(\sW^-+\sW^+)\sK^\uptext{d}(\sW^-)^{-1}\sW^-\sQ^{1/2}\big]$.
  Since $\sQ^{1/2}\sW^+\sQ^{1/2}$ is strictly upper triangular, $\ts\det(\sI+\sQ^{1/2}\sW^+\sQ^{1/2})=1$, which in particular shows that $\sI+\sQ^{1/2}\sW^+\sQ^{1/2}$ is invertible.
  Thus, using \eqref{eq:T-T+}, we deduce that $\det\!\big(\sI-\sK\big)$ is the same as $\det\!\big(\sI-(\sI+\sQ^{1/2}\sW^+\sQ^{1/2})^{-1}\sQ^{1/2}\sW^{(n)}\sK^\uptext{d}\sW^-\sQ^{1/2}\big)$
  with $\sW^{(n)}_{i,j}=\cw_{i,n}\uno{j=n}$.
  Using the cyclic property of the Fredholm determinant we deduce now that $\det(\sI-\sK)=\det(\sI-\wt\sK)$ with
  \[\wt\sK=\sK^\uptext{d}\sW^-\sQ^{1/2}(\sI+\sQ^{1/2}\sW^+\sQ^{1/2})^{-1}\sQ^{1/2}\sW^{(n)}.\]
  Since only the last column of $\sW^{(n)}$ is non-zero, the same holds for $\wt\sK$, and thus 
  \begin{equation}
  \ts\det(\sI-\sK)=\det(\sI-\wt\sK_{n,n})_{L^2(X)}.\label{eq:detKKnn}
  \end{equation}

  Our goal is thus to compute $\wt\sK_{n,n}$.
  We have, for $0\leq k\leq n-i$,
  \begin{equation*}
    \left[(\sQ^{1/2}\sW^+\sQ^{1/2})^k\sQ^{1/2}\sW^{(n)}\right]_{i,n}
    =\qquad\smashoperator{\sum_{i<\ell_1<\dots<\ell_k\leq n}}\qquad
    \Ml_{{i}}^{1/2}\cw_{i,{\ell_1}}\Ml_{{\ell_1}}\cw_{{\ell_{1}},{\ell_2}} \dotsm
    \Ml_{{\ell_{k-1}}}\cw_{{\ell_{k-1}},{\ell_k}}\Ml_{{\ell_k}}\cw_{{\ell_k},{n}},
  \end{equation*}
  while for $k>n-i$ the left-hand side above equals 0 (the case $k=0$ is interpreted as $\Ml_{i}^{1/2}\cw_{i,n}$).
  As in \cite{bcr} this leads to
  \begin{equation}
    \wt\sK_{i,n}
    =\sum_{j=1}^i\sum_{k=0}^{n-j}(-1)^{k}\quad\smashoperator{\sum_{j=\ell_0<\ell_1<\dots<\ell_k\leq
        n}}\quad K_i\cw_{i,j}\Ml_{j}\cw_{{j},{{\ell_1}}}\Ml_{{\ell_1}}\cw_{{\ell_1},{\ell_2}}
    \Ml_{{\ell_{k-1}}}\cw_{{\ell_{k-1}},{\ell_k}}\Ml_{{\ell_k}}\cw_{{\ell_k},{n}}.\label{eq:tswti1}
  \end{equation}
  Replacing each $\Ml_{\ell}$ by $I-\oM_{\ell}$ except for the first one and simplifying as in \cite{bcr} leads to
  \[\wt\sK_{i,n}=K_i\cw_{{i},{i+1}}\oM_{{i+1}}\cw_{{i+1},{i+2}}\oM_{{i+2}}
    \dotsm\cw_{{{n-1}},{{n}}}\oM_{{{n}}}-K_i\cw_{{i},{1}}\oM_{{1}}\cw_{{1},{2}}\oM_{{2}}\dotsm\cw_{{{n-1}},{{n}}}\oM_{{{n}}}.\]
  Setting $i=n$ yields $\wt\sK_{n,n}=K_{n}-K_n\cw_{{n},{1}}\oM_{{1}}\cw_{{1},{2}}\oM_{{2}}\dotsm\cw_{{{n-1}},{{n}}}\oM_{{{n}}}$, which in view of \eqref{eq:detKKnn} yields the result.
\end{proof}

\subsection{Proof of the TASEP path integral formula}\label{app:proofPathIntTASEP}

Recall that $Q^{n-m}\Psi^{n}_{n-k}=\Psi^{m}_{m-k}$.
Then for $K^{(n)}_{t}=K_t(n,\cdot;n,\cdot)$ we may write
\begin{equation}\label{eq:checkQKK}
  Q^{n_j-n_i}K^{(n_j)}_t=\sum_{k=0}^{n_j-1}Q^{n_j-n_i}\Psi^{n_j}_{k}\otimes \Phi^{n_j}_{k}=\sum_{k=0}^{n_j-1}\Psi^{n_i}_{n_i-n_j+k}\otimes \Phi^{n_j}_{k}=K_t(n_i,\cdot;n_j,\cdot)+Q^{n_j-n_i}\uno{n_i<n_j}.
\end{equation}
This means that the extended kernel $K_t$ defined in \eqref{eq:Kt} has exactly the structure specified in \eqref{eq:generalExt}, taking (for fixed $t\geq0$) 
\[t_i=n_i,\qquad K_{t_i}=K^{(n_i)}_t\qqand \mathcal{W}_{t_i,t_j}=Q^{n_j-n_i}.\] 
As a consequence, the path integral version \eqref{eq:path-int-kernel-TASEPgem} of the TASEP formula will follow from Thm.\,\ref{thm:alt-extendedToBVP}, with $N_{t_i}=\bar\P_{a_i}$, once we check all the necessary assumptions.
We do this next.

First we need to specify the domains and ranges of our operators:
\begin{gather}
\cD_K=\{f\in\ell^2(\zz)\!:\sum_{x\in\zz}2^x|x^kf(x)|<\infty~~\forall~k\geq0\},\qquad\cR_K=\ell^2(\zz),\\
\cD_\cw^+=\cR_\cw^+=\{f\in\ell^2(\zz)\!:\sum_{x\leq0}2^x|x^kf(x)|<\infty~~\forall~k\geq0\},
\qquad\cD_\cw^-=\cR_\cw^-=\cm(\zz).
\end{gather}
It is easy to check that our operators are indeed well defined with these domains and ranges, and also that \eqref{eq:inclusions} and assumptions (i)-(iv) are satisfied (essentially all one needs is to observe that $Q$ and $Q^{-1}$ preserve $\ell^2(\zz)$).
Before checking assumptions (v) and (vi), let us turn to \eqref{eq:altBCR}.

Conditions \eqref{eq:altBCR1} and \eqref{eq:altBCR2} are clearly satisfied.
We note at this stage that Assumptions 2(i) and 2(ii) in \cite[Thm. 3.3]{bcr} (the semigroup property and the right-invertibility condition) hold in our setting, but their Assumption 2(iii), which translates into $Q^{n_j-n_i}K_t^{(n_j)}=K_t^{(n_i)}Q^{n_j-n_i}$ for $n_i\leq n_j$, does not (in fact, the right hand side does not even make sense as the product is divergent, as can be seen by noting that $\Phi^{(n)}_0(x)=2^{x-X_0(n)}$),
which is why we need Thm.\,\ref{thm:alt-extendedToBVP}.
To use it, we need to check \eqref{eq:altBCR3}, that is
\begin{equation}
  Q^{n_j-n_i}K_t^{(n_j)}Q^{n_i-n_j}=K_t^{(n_i)}\label{eq:secondExtKernAssmp}
\end{equation}
for $n_i<n_j$.
In fact, if $0\leq k<n_i$ then \eqref{bhe1} together with the easy fact that $h^n_k(\ell,z)=h^{n-1}_{k-1}(\ell-1,z)$ imply that $(\Qt)^{n_i-n_j}h^{n_j}_{k+n_j-n_i}(0,z)=h^{n_j}_{k+n_j-n_i}(n_j-n_i,z)=h^{n_i}_{k}(0,z)$, so that
$(\Qt)^{n_i-n_j}\Phi^{n_j}_{k+n_j-n_i}=\Phi^{n_i}_k$.
Therefore, proceeding as in \eqref{eq:checkQKK}, the left hand side of \eqref{eq:secondExtKernAssmp} equals
\begin{equation}\label{eq:extKernAssumCheck}
 \begin{split}
  \sum_{k=0}^{n_j-1}Q^{n_j-n_i}\Psi^{n_j}_{k}\otimes(\Qt)^{n_i-n_j}\Phi^{n_j}_{k}
   &=\sum_{k=n_i-n_j}^{n_i-1}Q^{n_j-n_i}\Psi^{n_j}_{k+n_j-n_i}\otimes(\Qt)^{n_i-n_j}\Phi^{n_j}_{k+n_j-n_i}\\
  &=\sum_{k=n_i-n_j}^{-1}\Psi^{n_i}_{k}\otimes(\Qt)^{n_i-n_j}\Phi^{n_j}_{k+n_j-n_i}
  +\sum_{k=0}^{n_i-1}\Psi^{n_i}_{k}\otimes\Phi^{n_i}_{k}
 \end{split}
\end{equation}
(note the crucial fact, while $\Qt\Phi^n_\ell$ is divergent, in this argument $(\Qt)^m$ is applied to these functions only for negative $m$).
The last sum is exactly the right hand side of \eqref{eq:secondExtKernAssmp}, so we need to check that the first sum on the right hand side above vanishes.
For this we note that, if $n_i-n_j\leq k<0$, then we have 
$(\Qt)^{n_i-n_j}h^{n_j}_{k+n_j-n_i}(0,z)=(\Qt)^kh^{n_j}_{k+n_j-n_i}(k+n_j-n_i,z)=0$ thanks to \eqref{bhe2} and the fact that $2^z\in\ker(\Qt)^{-1}$, which gives $(\Qt)^{n_i-n_j}\Phi^{n_j}_{k+n_j-n_i}=0$ as desired.

To finish our proof of \eqref{eq:path-int-kernel-TASEPgem} we need to check that conditions (v) and (vi) in Thm.\,\ref{thm:alt-extendedToBVP} are satisfied.
For (v), we let $V_{n_i}=\wt V_i$ and $V'_{n_i}=\wt V'_i$ with $\wt V_{i}f(x)=(1+x^2)^{i}$, $\wt V_{i}'(x)=(1+x^2)^{-i}$.
For $i<j$ we need to check that $\bar\chi_{a_i}V_{n_i}Q^{n_j-n_i}V_{n_j}'\bar\chi_{a_j}$ is trace class.
The $\bar\chi_{a_i}$'s are projections, so $\|\bar\chi_{a_i}\|_\infty=1$ and thus it is enough to show that $V_{n_i}Q^{n_j-n_i}V_{n_j}'$ is trace class.
Assume first that $n_j-n_i>1$.
Then we may write $V_{n_i}Q^{n_j-n_i}V_{n_j}'$ as $\prod_{\ell=n_i}^{n_j-1}\wt V_{\ell}Q\wt V_{\ell+1}'$ and it is enough to show that each factor is Hilbert-Schmidt, which is clear:
\[\|\wt V_{\ell}Q\wt V_{\ell+1}'\|_2^2=\sum_{\substack{x,y\in\zz\\y<x}}2^{2(y-x)}\frac{(1+x^2)^{2\ell}}{(1+y^2)^{2(\ell+1)}}
=\sum_{y\in\zz}\inv{(1+y^2)^{2(\ell+1)}}\sum_{x>0}(1+(x+y)^2)^{2\ell}\ts2^{-2x}<\infty.\]
If $n_j-n_i=1$ then we write $Q(x,y-1)=A^2(x,y)$ with $A(x,y)=\frac{\Gamma(x-y+\frac12)}{\sqrt{\pi}(x-y)!}2^{-(x-y+1/2)}\uno{x\geq y}$ (this can be checked directly, but it just corresponds to writing a Geom$[\frac12]$ random variable as the sum of two independent negative binomial random variables with parameters $(\frac12,\frac12)$) and thus it is enough to show that $V_iA^2V'_{i+1}$ is trace class.
By Stirling's formula, $A(x,y)\sim c(x-y)^{-1/2}2^{-(x-y)}$ as $x-y\to\infty$, so factoring the kernel as $\big(V_iAV'_{i+1/2}\big)\big(V_{i+1/2}AV'_{i+1}\big)$ the above argument still applies.
Next for all $i,j$ we need to check that $\bar\chi_{a_i}V_{n_i}Q^{n_j-n_i}K_t^{(n_i)}\bar\chi_{a_j}V_{n_j}'$ is trace class.
Since $Q^{n_j-n_i}K_t^{(n_j)}=\sum_{k=1}^{n_j}Q^{n_j-n_i}\Psi^{n_j}_{n_j-k}\otimes\Phi^{n_j}_{n_j-k}$, it is enough to show (using also \eqref{eq:QmnPsi}) that $\bar\chi_{a_i}V_{n_i}\Psi^{n_i}_{n_i-k}\otimes \bar\chi_{a_j}V'_{n_j}\Phi^{n_j}_{n_j-k}$ is trace class for each $k=1,\dotsc,n_j$.
This operator is rank one and its unique eigenvalue is $\lambda=\sum_{x\in\zz}\bar\chi_{a_i}V_{n_i}\Psi^{n_i}_{n_i-k}(x)\bar\chi_{a_j}V'_{n_j}\Phi^{n_j}_{n_j-k}(x)$, which satisfies
\[|\lambda|^2\leq\sum_{x\leq a_i}(1+x^2)^{2i}\Psi^{n_i}_{n_i-k}(x)^2\sum_{x\leq a_j}\frac{2^{2x}}{(1+x^2)^{2j}}(2^{-x}\Psi^{n_i}_{n_i-k}(x))^2<\infty\]
because $\Psi^{n_i}_{n_i-k}(x)=0$ for $x<X_0(k)-n_i+k$ and $2^{-x}\Psi^{n_i}_{n_i-k}(x)$ is a polynomial.
Therefore $\|\bar\chi_{a_i}V_{n_i}\Psi^{n_i}_{n_i-k}\otimes \bar\chi_{a_j}V'_{n_j}\Phi^{n_j}_{n_j-k}\|_1=|\lambda|<\infty$ as desired.

We turn finally to (vi).
By \eqref{eq:tswti1} it is enough to show that
\begin{equation}
U_{n_i}K^{(n_i)}_tQ^{n_{\ell_0}-n_i}\bar\chi_{a_{\ell_0}}Q^{n_{\ell_1}-n_{\ell_0}}\bar\chi_{a_{\ell_1}}\dotsm Q^{n_{\ell_k}-n_{\ell_{k-1}}}\bar\chi_{a_{\ell_k}}Q^{m-n_{\ell_k}}U'_{n_m}\label{eq:productUdotsU'}
\end{equation}
is trace class for all $\ell_0<\ell_1<\dotsm<\ell_k\leq m$ with $\ell_0=1,\dotsc,i$ and $k=0,\dotsc,m-\ell_0$, where we choose $U_{n_i}=V_{n_i}$, $U'_{n_i}=V'_{n_i}$.
Now replace each $\bar\chi_{a_\ell}$ in the above operator by $V'_{n_{\ell}}\bar\chi_{a_\ell}V_{n_\ell}$.
The argument in the last paragraph can be used to show again that $U_{n_i}K^{(n_i)}_tQ^{n_{\ell_0}-n_i}V'_{n_{\ell_0}}\bar\chi_{a_{\ell_0}}$ is trace class (now using the decay of $\Psi^{n_i}_{n_i-k}(x)$ as $x\to\infty$ since we have no projection on the left).
As shown above, the rest is the product of trace class operators, so the whole product \eqref{eq:productUdotsU'} is trace class as needed.

\vs

\noindent{\bf Acknowledgements.}  
JQ and KM were supported by the Natural Sciences and Engineering Research Council of Canada.
JQ was also supported by a Killam research fellowship.
DR was supported by Conicyt Basal-CMM Proyecto/Grant PAI AFB-170001, by Programa Iniciativa Cient\'ifica Milenio grant number NC120062 through Nucleus Millenium Stochastic Models of Complex and Disordered Systems, and by Fondecyt Grants 1160174 and 1201914.

\printbibliography[heading=apa]

\end{document}